\newcommand{\Q}{{\mathbb Q}}
\newcommand{\CC}{{\mathscr C}}
\newcommand{\LL}{{\mathscr  L}}
\newcommand{\FF}{{\mathscr F}}
\newcommand{\pr}{\operatorname{pr}}
\newcommand{\EE}{{\mathscr E}}
\newcommand{\Eq}{{\mathscr E}}
\newcommand{\id}{\operatorname{id}}
\newcommand{\Hom}{\operatorname{Hom}}
\newcommand{\Ga}{{\mathbb G}_a}
\newcommand{\Gm}{{\mathbb G}_m}
\newcommand{\Aff}{\operatorname{Aff}}
\newcommand{\End}{\operatorname{End}}
\newcommand{\GL}{\operatorname{GL}}
\newcommand{\SL}{\operatorname{SL}}
\newcommand{\PSL}{\operatorname{PSL}}
\newcommand{\im}{\operatorname{im}}
\newcommand{\rank}{\operatorname{rank}}
\newcommand{\C}{{\mathbb C}}
\newcommand{\Mat}{\operatorname{Mat}}
\newcommand{\tens}{\operatorname{tensor}}
\newcommand{\category}{{\mathcal C}}
\newcommand{\N}{{\mathbb N}}
\newcommand{\F}{{\mathbb F}}
\newtheorem{theorem}{Theorem}[section]
\newtheorem{lemma}[theorem]{Lemma}
\newtheorem{proposition}[theorem]{Proposition}
\newtheorem{corollary}[theorem]{Corollary}
\newtheorem{problem}[theorem]{Problem}
\newcommand{\Z}{{\mathbb Z}}
\theoremstyle{definition}
\newtheorem{definition}[theorem]{Definition}
\newtheorem{example}[theorem]{Example}
\newtheorem{remark}[theorem]{Remark}
\newtheorem{algorithm}[theorem]{Algorithm}
\title[The Graph Isomorphism Problem and Approximate Categories]{The Graph Isomorphism Problem\\ and Approximate Categories}
\author{Harm Derksen}
\thanks{The author is supported by the NSF, grant DMS 0901298}
\begin{document}
\maketitle

\begin{abstract}
It is unknown whether two graphs can be tested for isomorphism in polynomial time. A classical approach
to the Graph Isomorphism Problem is the $d$-dimensional Weisfeiler-Lehman algorithm.  The $d$-dimensional
WL-algorithm can distinguish many pairs of graphs, but the pairs of non-isomorphic graphs constructed by Cai, F\"urer and Immerman
 it cannot distinguish.
If $d$ is fixed, then the WL-algorithm runs in polynomial time.
We will formulate the Graph Isomorphism Problem as an Orbit Problem: Given a representation $V$ of
an algebraic group $G$ and two elements $v_1,v_2\in V$, decide whether $v_1$ and $v_2$
lie in the same $G$-orbit. Then we attack the Orbit Problem by constructing certain approximate categories
${\mathcal C}_d(V), d\in \N=\{1,2,3,\dots\}$ whose objects include the elements of $V$. We show that $v_1$ and $v_2$
are not in the same orbit by showing that they are not isomorphic in the category ${\mathcal C}_d(V)$ for some $d\in \N$.
For every $d$ this gives us an algorithm for isomorphism testing. We will show that the WL-algorithms
reduce to our algorithms, but that our algorithms cannot be reduced to the WL-algorithms. 
Unlike the Weisfeiler-Lehman algorithm, our algorithm can  distinguish the Cai-F\"urer-Immerman graphs in polynomial time.
\end{abstract}
\section{Introduction and Main Results}
\subsection{The Graph Isomorphism Problem}
Suppose that $\Gamma_1$ and $\Gamma_2$ are two graphs on $n$ vertices.
The {\em Graph Isomorphism Problem} asks whether they are isomorphic or not.
In Computational Complexity Theory, the Graph Isomorphism Problem plays an important role, because
it lies in the complexity class {\bf NP}, but it is not known whether it lies in {\bf P} or {\bf NP-complete}.
See \cite{KUJ} for more details. Based on Valiant's algebraic version of the {\bf P} versus {\bf NP} problem (\cite{V}), 
Mulmuley and Sohoni reformulated Valiant's {\bf P} versus {\bf NP} problem into a question
about orbits of algebraic groups in \cite{MS,MS2}. In this paper, we will study the Graph Isomorphism in terms of orbits of algebraic groups, but our approach is not closely related to the work of Mulmuley and Sohoni.

For special families of graphs there are polynomial time algorithms for the graph isomorphism problem. Polynomial
time algorithms were found for trees (Edmonds' algorithm, see \cite[p.196]{BS}), planar graphs (\cite{HT,HW}) and  more generally for
graphs of bounded genus (\cite{FM,Miller}), for graphs with bounded degree (\cite{Luks}),
for graphs with bounded eigenvalue multiplicity (\cite{BGM}), and for graphs with bounded color class size (\cite{Luks2}).

A general approach to the Graph Isomorphism Problem was developed by Weisfeiler and Lehman in the 1960's.
The $d$-dimensional Weisfeiler-Lehman
algorithm ${\bf WL}_d$ systematically colors $e$-tuples of vertices ($e\leq d$) until a stable coloring is obtained (see~\cite{WL,Weis}). 
The $d$-dimensional WL-algorithm terminates with a proof that the two graphs are isomorphic, or it terminates with an inconclusive result.
If $d\geq n$, then the $d$-dimensional Weisfeiler-Lehman algorithm will distinguish all non-isomorphic graphs with $n$ vertices.
For fixed $d$, the Weisfeiler-Lehman algorithm runs in polynomial time. The higher dimensional Weisfeiler-Lehman
algorithm can distinguish graphs in many families of graphs.  However, Cai, F\"urer and Immerman
showed in \cite{CFI} that for every $d$, there exists a pair of non-isomorphic graphs with degree $3$ and $O(d)$ vertices which cannot be distinguished by the
$d$-dimensional Weisfeiler-Lehman algorithm. The  set of Weisfeiler-Lehman algorithms ${\bf WL}=\{{\bf WL}_d\}_{d\in \N}$ is an example of what we will
call a {\em family of GI-algorithms}:
\begin{definition}
A family of GI-algorithms is a collection of algorithms ${\bf A}=\{{\bf A}_d\}_{d\in \N}$ such that
\begin{enumerate}
\item The input of ${\bf A}_d$ consists of two graphs with the same number of vertices. The value of the output is either ``{\tt nonisomorphic}'' or ``{\tt inconclusive}''.
If the output is ``{\tt nonisomorphic}'' then the graphs are not isomorphic and we say that ${\bf A}_d$ distinguishes the two graphs.
\item If the graphs are not isomorphic, then ${\bf A}_d$ distinguishes them some $d$.
\item For fixed $d$, ${\bf A}_d$ runs in polynomial time.
\end{enumerate}
\end{definition}
Besides the Weisfeiler-Lehman algorithm, there are other families of polynomial time algorithms for the graph isomorphism problem. In order to compare
various algorithms, we make the following definition (see also~\cite[\S6]{EKP}):
\begin{definition}
For two families of GI-algorithms ${\bf A}=\{{\bf A}_d\}_{d\in \N}$ and ${\bf B}=\{{\bf B}_d\}_{d\in\N}$ we 
say that ${\bf A}$ is reducible to ${\bf B}$
if there exists a function $f$ from the $\N\to\N$  such that for every $d$ and every pair of graphs which ${\bf A}_d$ distinguishes,
the graphs can be distinguished by ${\bf B}_{f(d)}$. We say that ${\bf A}$ and ${\bf B}$ are equivalent if ${\bf A}$ is reducible to ${\bf B}$ and ${\bf B}$ is reducible to ${\bf A}$.
\end{definition}

The Weisfeiler-Lehman algorithm is combinatorial in nature. There are also more algebraic approaches
to the graph isomorphism problem.
The $2$-dimensional
Weisfeiler-Lehman algorithm can be formulated in terms of {\em cellular algebras} (see~\cite{Weis}).\footnote{
These cellular algebras should not be confused with a different, seemingly unrelated
notion of cellular algebras introduced in \cite{GL}.} These algebras were introduced by Weisfeiler and Lehman,
and independently by D.~Higman under the name {\em coherent algebras} (see~\cite{Higman}).
In \cite{EKP},
Evdokimov, Karpinski and Ponomarenko introduced the $d$-closure of
a cellular algebra. One may view the $d$-closed cellular algebras as 
higher-dimensional analogs of the cellular algebras.
The algorithm based on this $d$-closure will be denote by ${\bf CA}_d$. 
In \cite{EKP} it was shown that the algorithm ${\bf CA}_d$ distinguishes any two graphs which can be distinguished by ${\bf WL}_d$.
In \cite[Theorem 1.4]{EP}  it was shown that ${\bf WL}_{3d}$ can distinguish any two graphs which can be distinguished by ${\bf CA}_d$.
So the approach with cellular algebras ${\bf CA}=\{{\bf CA}_d\}_{d\in \N}$ is equivalent to the Weisfeiler-Lehman algorithm ${\bf WL}=\{{\bf WL}_d\}_{d\in \N}$.

In this paper we will define a family of GI-algorithms ${\bf AC}_d=\{{\bf AC}_d\}_{d\geq 0}$ using approximate categories.
We will show that ${\bf WL}$ reduces to ${\bf AC}$ and that ${\bf AC}$ does {\em not} reduce to  ${\bf WL}$.

\subsection{Finite variable logic}  Pairs of non-isomorphic graphs that can be distinguished with the Weisfeiler-Lehman algorithm can be characterized in terms of finite variable logic.

A graph is a pair $\Gamma=\langle X,R\rangle $ where $X$ is a finite set, and $R\subseteq X\times X$ is a symmetric relation.
We assume that there are no loops. We will write $xRy$ if $(x,y)\in R$.
We also consider graphs with colored vertices. A graph with $m$ colors is
a tuple $\langle X,R,X_1,X_2,\dots,X_m\rangle$ where $\langle X,R\rangle$ is a graph, and $X$ is the disjoint union
of $X_1,\dots,X_m$. Two colored graphs $\Gamma=\langle X,R,X_1,\dots,X_m\rangle $ and $\Gamma'=\langle X',R',X_1',\dots,X_m'\rangle$
are isomorphic if there is a bijection $\phi:X\to X'$ such that $x\in X_i\Leftrightarrow \phi(x)\in X_i'$
for all $x\in X$ and all $i$, and $xRy\Leftrightarrow \phi(x) R'\phi(y)$ for all $x,y\in X$.

We will view a graph with $m$ colors as a structure with 1 binary relation and $m$ unitary relations.
To such a structure one can associate a first order language $\LL$. If $\varphi$ is a closed formula
in $\LL$, then we will write $\Gamma\models \varphi$ if the formula $\varphi$ is true for $\Gamma$.

Let $\LL_d$ be the $d$-variable
first order language. Formulas in $\LL_d$ involve at most $d$ variables, although
variables may be re-used. For example
$$
\varphi(x_1,x_2)=\exists x_3\,(\exists x_2\, (x_1Rx_2 \wedge x_2Rx_3)\wedge x_3Rx_2)
$$
is a formula in $\LL_3$ which expresses that there exists a path of length $3$ from $x_1$ to $x_2$.
Note that in this formula, we re-use the variable $x_2$.

A more expressive language is $\CC_d$, the $d$-variable
first order language with counting. In this language we allow quantifiers such as $\exists_d$.
A formula
$\exists_d x\, \varphi(x)$
is true if there are exactly $d$ elements $x\in X$ for which $\varphi(x)$ is true.
\begin{definition}
We say that the language $\CC_d$ distinguishes the colored graphs $\Gamma$ and $\Gamma'$ if there
exists a closed formula $\varphi$ in $\CC_d$ such that $\Gamma\models \varphi$
and $\Gamma'\models\neg\varphi$.
\end{definition}
\begin{theorem}[\S5 of \cite{CFI}]\label{theo:CFI}
The language $\CC_d$ distinguishes the colored graphs $\Gamma$ and $\Gamma'$ if and only
if the $(d-1)$-dimensional Weisfeiler-Lehman algorithm distinguishes the two graphs.
\end{theorem}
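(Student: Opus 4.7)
The plan is to prove both implications by establishing that the stable coloring produced by $\mathbf{WL}_{d-1}$ on $(d-1)$-tuples of vertices partitions $X^{d-1}$ into exactly the equivalence classes of $\CC_d$-indistinguishability, and then to observe that the two graphs are $\CC_d$-equivalent (i.e., satisfy the same closed $\CC_d$-formulas) if and only if the multisets of stable WL colors on $(d-1)$-tuples in $\Gamma$ and $\Gamma'$ coincide. The reason the right number of variables is $d$ rather than $d-1$ is that testing whether a $(d-1)$-tuple can be extended by another vertex requires one extra variable slot, and then free reuse of that slot matches the WL refinement rule.

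For the direction ``$\CC_d$ distinguishes $\Rightarrow \mathbf{WL}_{d-1}$ distinguishes'', I would induct on the construction of $\CC_d$-formulas $\varphi(x_{i_1},\dots,x_{i_k})$ with $k\leq d$ and show that the truth value of $\varphi$ on a tuple depends only on the stable WL color of the subtuple obtained by deleting one coordinate position that $\varphi$ no longer ``uses''. Atomic formulas are handled by the initial coloring of $\mathbf{WL}_{d-1}$, which records the isomorphism type (edges, equalities, color classes) of the given tuple. Boolean connectives are immediate. The counting quantifier $\exists_m x_j\,\varphi$ is the crux: the number of witnesses $b\in X$ making $(\bar a,b)$ satisfy $\varphi$ is determined by the stable color of $\bar a$, because this is exactly the kind of count the WL refinement step equalizes on color-equivalent tuples. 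Then a distinguishing closed sentence gives distinct values of some counting feature of stable WL colors, which in turn gives different $\mathbf{WL}_{d-1}$ outputs.

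For the converse direction, I would induct on the refinement round $r$ of $\mathbf{WL}_{d-1}$ and construct, for every $(d-1)$-tuple $\bar a$, a $\CC_d$-formula $\chi_{\bar a}^{(r)}(x_1,\dots,x_{d-1})$ whose models in any graph are precisely the $(d-1)$-tuples that receive the same $r$-th color as $\bar a$. At $r=0$, this formula encodes the atomic isomorphism type of $\bar a$. At the step $r\mapsto r+1$, the new color of $\bar a$ records, for each coordinate position $j$ and each previous color class $c$, the number of vertices $b$ such that replacing the $j$-th entry of $\bar a$ by $b$ produces a tuple of color $c$; this count is written with a single counting quantifier that reuses $x_j$ as the bound variable, which is legal precisely because $\CC_d$ allows $d$ variables total. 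Stability of the coloring gives a fixed point and hence a single $\CC_d$-formula per color class. Once the stable colors are definable, a difference in the multisets of $(d-1)$-tuple colors between $\Gamma$ and $\Gamma'$ yields a distinguishing closed formula: quantify existentially with counting to assert that some color class has one cardinality in $\Gamma$ and another in $\Gamma'$.

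The main obstacle is the careful bookkeeping between the ``$d-1$'' in the WL dimension and the ``$d$'' in the logic: the refinement step of $\mathbf{WL}_{d-1}$ momentarily looks at $d$ positions (the $(d-1)$-tuple plus a test vertex), and this must be matched precisely by the variable-reuse convention of $\CC_d$. One must also verify that the construction of $\chi_{\bar a}^{(r)}$ stays within $\CC_d$ at every round, which requires quantifying the new counting quantifier so that only previously used variables are rebound, rather than introducing a new one. Once this accounting is in place, both implications fall out, and the equivalence of $\CC_d$-distinguishability and $\mathbf{WL}_{d-1}$-distinguishability follows.
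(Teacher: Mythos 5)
The paper does not actually prove this statement --- it is imported from \S5 of \cite{CFI} --- so your proposal has to be judged against the standard Cai--F\"urer--Immerman/Immerman--Lander argument, which it follows in outline. There is, however, a genuine gap in your converse direction. The refinement step of the $(d-1)$-dimensional WL algorithm recolours a tuple $\bar a=(a_1,\dots,a_{d-1})$ by the multiset, over test vertices $b$, of the entire vector $\bigl(c(\bar a[1/b]),\dots,c(\bar a[d-1/b])\bigr)$ together with the atomic data linking $b$ to $\bar a$: all $d-1$ substitutions are recorded simultaneously for the \emph{same} $b$. Your formula $\chi^{(r+1)}_{\bar a}$ instead records, for each position $j$ separately, how many $b$ satisfy $c(\bar a[j/b])=c$, and you express this by rebinding $x_j$. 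That is the ``per-position'' (oblivious) refinement, which is strictly coarser: its stable partition on $(d-1)$-tuples is known to have only the power of the $(d-2)$-dimensional WL algorithm, equivalently of $\CC_{d-1}$. Consequently a difference in the true stable ${\bf WL}_{d-1}$ colourings of $\Gamma$ and $\Gamma'$ need not be visible to your formulas, and the implication ``${\bf WL}_{d-1}$ distinguishes $\Rightarrow\CC_d$ distinguishes'' does not follow. The correct construction binds one \emph{fresh} variable $x_d$ for the test vertex and asserts, inside a single counting quantifier $\exists_m x_d$, the conjunction over $i$ of $\chi^{(r)}_{c_i}$ evaluated on $(x_1,\dots,x_{i-1},x_d,x_{i+1},\dots,x_{d-1})$; this is exactly where the $d$-th variable is indispensable. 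Note also the internal inconsistency in your sketch: you justify the rebinding of $x_j$ by the availability of $d$ variables, yet rebinding uses no extra variable at all --- if your construction were correct it would place the stable ${\bf WL}_{d-1}$ colours inside $\CC_{d-1}$ and collapse the hierarchy, contradicting the main lower bound of \cite{CFI}.

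A secondary point concerns the forward direction: the counting-quantifier case needs more than the remark that the WL step ``equalizes such counts''. One must prove, by induction on formulas with up to $d$ free variables (not $d-1$), that $(d-1)$-tuples with equal stable colours admit a colour-preserving bijection between candidate witnesses $b$ in $\Gamma$ and in $\Gamma'$ (this is the content of the bijective pebble-game formulation); only then do the cardinalities tested by $\exists_m$ agree. As written, ``deleting one coordinate position that $\varphi$ no longer uses'' is not well defined for formulas that genuinely use all $d$ variables, so this induction hypothesis needs to be set up more carefully before either implication is complete.
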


\subsection{Orbit problems}\label{sec:orbit_problems}
Fix a field $k$, and let $\overline{k}$ be its algebraic closure. 
Suppose that $G$ is an algebraic group defined over $k$, and $V$ is a representation of $G$ (over $k$).
Let $G(\overline{k})$ be the set of $\overline{k}$-rational points of $G$.

\medskip
\noindent
{\bf Orbit Problem:} Given $v_1,v_2\in V$, determine whether $v_1$ and $v_2$ lie in the same $G(\overline{k})$-orbit.

\medskip
\noindent
Many isomorphism problems can be translated to orbit problems. The graph isomorphism problem is one example of this. 
Let $G=\Sigma_n$ be the group of $n\times n$ permutation
matrices, and $V=\Mat_{n,n}(k)$ be the set of $n\times n$ matrices. Then $V$ is a representation of $\Sigma_n$
where $\Sigma_n$ acts by conjugation. To a graph $\Gamma$  with vertex set $\{1,2,\dots,n\}$ we can associate its adjacency matrix $A_{\Gamma}$ defined by
$$
(A_{\Gamma})_{i,j}=\left\{
\begin{array}{ll}
1 & \mbox{if there is an edge between $i$ and $j$;}\\
0 & \mbox{otherwise.}
\end{array}\right.
$$
The following lemma is obvious:
\begin{lemma}\label{lem:graph_orbit}
Two graphs $\Gamma_1$ and $\Gamma_2$ are isomorphic
if and only if their adjacency matrices $A_{\Gamma_1}$ and $A_{\Gamma_2}$ lie in the same $\Sigma_n$-orbit.
\end{lemma}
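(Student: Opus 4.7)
The plan is to translate the notion of graph isomorphism directly into the matrix identity that defines the $\Sigma_n$-orbit, using permutation matrices as the bridge. A candidate bijection $\phi$ between the vertex sets $\{1,2,\dots,n\}$ of $\Gamma_1$ and $\Gamma_2$ is the same thing as a permutation $\sigma\in\Sigma_n$, and the associated permutation matrix $P_\sigma\in\Sigma_n\subseteq\Mat_{n,n}(k)$ has $(i,j)$-entry equal to $1$ exactly when $\sigma(j)=i$. The conjugation action is $P_\sigma\cdot A=P_\sigma A P_\sigma^{-1}$.

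First I would compute the entries of $P_\sigma A_{\Gamma_1} P_\sigma^{-1}$: a direct index-chase gives
\[
(P_\sigma A_{\Gamma_1} P_\sigma^{-1})_{i,j}=(A_{\Gamma_1})_{\sigma^{-1}(i),\sigma^{-1}(j)}.
\]
By definition of the adjacency matrix, this equals $1$ precisely when $\{\sigma^{-1}(i),\sigma^{-1}(j)\}$ is an edge of $\Gamma_1$. Hence $A_{\Gamma_2}=P_\sigma A_{\Gamma_1} P_\sigma^{-1}$ holds if and only if $\{i,j\}$ is an edge of $\Gamma_2$ exactly when $\{\sigma^{-1}(i),\sigma^{-1}(j)\}$ is an edge of $\Gamma_1$, i.e.\ $\sigma^{-1}$ (equivalently $\sigma$) is an isomorphism of graphs.

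For the forward implication I take a given isomorphism $\phi:\Gamma_1\to\Gamma_2$, set $\sigma=\phi$, and read the identity above backwards. For the reverse implication I assume $A_{\Gamma_2}$ and $A_{\Gamma_1}$ lie in the same $\Sigma_n$-orbit, pick $\sigma$ realising this, and conclude that $\sigma$ is edge-preserving and hence a graph isomorphism. The only non-routine aspect is bookkeeping: fixing a convention for $P_\sigma$ and for the conjugation action, and tracking whether $\sigma$ or $\sigma^{-1}$ plays the role of the isomorphism; once conventions are set, both directions reduce to a one-line entrywise check, which is why the statement is labelled obvious.
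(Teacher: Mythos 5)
Your proof is correct, and it simply spells out the routine entrywise verification that the paper omits by declaring the lemma obvious; the approach (identifying a vertex bijection with a permutation matrix and checking $(P_\sigma A_{\Gamma_1}P_\sigma^{-1})_{i,j}=(A_{\Gamma_1})_{\sigma^{-1}(i),\sigma^{-1}(j)}$) is exactly the intended one. No issues.
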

By replacing $V$ by a slightly different representation of $\Sigma_n$, one can generalize Lemma~\ref{lem:graph_orbit}
to colored graphs, and even to finite structures.
In Section~\ref{sec:module_iso} we will also translate the module isomorphism problem to an orbit problem.

\subsection{Main results}
In this paper we attack the orbit problem as follows. 
Suppose that $V$ is a representation of $G$. Let $\Aff(V)$ denote the set of
affine subspaces of $V$. So $\Aff(V)$ contains the empty set and all subsets of $V$
of the form $v+Z$, where $v\in V$ and $Z\subseteq V$ is a subspace. We may view
$V$ as a subset of $\Aff(V)$ by identifying $v\in V$ with the subset $\{v\}\subseteq V$.
The group $G$ acts on $\Aff(V)$. We also may view $\Aff(V)$ as a subset of $\Aff(V\otimes_k\overline{k})$
by identifying $v+Z\in \Aff(V)$ with $v\otimes 1+Z\otimes \overline{k}\in \Aff(V\otimes_k\overline{k})$
for every $v\in V$ and every subspace $Z\subseteq V$.
For every $d$
we construct a category $\category_d(V)$. 
For $X_1,X_2\in \Aff(V)$, we will write $X_1\cong_d X_2$ if $X_1$ and $X_2$ are isomorphic 
in ${\mathcal C}_d(V)$.
The categories ${\mathcal C}_d(V)$ have the following properties:
 with the following properties:
\begin{enumerate}
\item The set of objects of $\category_d(V)$ is $\Aff(V)$. In particular, elements of $V$ are objects in $\category_d(V)$.

\item $\category_d(V)$ is a $k$-category, i.e., for every two objects $X_1,X_2$ the set $\Hom_d(X_1,X_2)$ is a $k$-vector space,
and if $X_3$ is another object, then the composition map 
$$\Hom_d(X_1,X_2)\times \Hom_d(X_2,X_3)\to \Hom_d(X_1,X_3)$$ 
is bilinear.
\item For $X_1,X_2\in \Aff(V)$  we have $X_1\cong_{d+1}X_2\Rightarrow X_1\cong_d X_2$.
\item Two affine subspaces $X_1,X_2\in \Aff(V)$ lie in the same $G(\overline{k})$-orbit if and only if
$X_1\cong_d X_2$ for all $d$.
\end{enumerate}

An {\em equivariant} $f:V\to V'$ is a polynomial map between two representations which is $G$-equivariant.
An equivariant $f:V\to V'$ for which $V'$ is an irreducible representation is called a {\it covariant}.
If $k$ is algebraically closed, then $f$ being equivariant means that $f(g\cdot v)=g\cdot f(v)$
for all $v\in V$ and all $g\in G$. In the case where $k$ is not algebraically closed, equivariance is defined in Definition~\ref{def2.10}. We say that an equivariant $f:V\to V'$ distinguishes
two elements $v_1,v_2\in V$ if either $f(v_1)=0$ and $f(v_2)\neq 0$, or, $f(v_1)\neq 0$ and $f(v_2)=0$.
It is well known that if $v_1,v_2\in V$ are not in the same $G(\overline{k})$-orbit, then they can be distinguished by some equivariant.

For a representation $V$, $\Eq(V)$ denotes the class
of all equivariants $f:V\to V'$, where $V'$ is any representation.
For every positive integer $d$, we will define in Section~\ref{sec:constructible_equivariants} a subset $\Eq_d(V)\subseteq \Eq(V)$.
Elements of $\Eq_d(V)$ are called {\em $d$-constructible equivariants}.
We have
$$
\Eq_1(V)\subseteq \Eq_2(V)\subseteq \Eq_3(V)\subseteq \cdots
$$
and $\bigcup_{d=1}^\infty \Eq_d(V)=\Eq(V)$.

For a representation $V$ and a positive integer $d$ we will define in Section~\ref{sec:constructible} a class ${\mathcal F}_d(V)$ of 
functors ${\mathcal F}:{\mathcal C}_d(V)\to {\mathcal C}_d(V')$ where $V'$ is some representation.
Elements of ${\mathcal F}_d(V)$ are called {\em $d$-constructible functors}. We have
$$
\FF_1(V)\subseteq \FF_2(V)\subseteq \FF_3(V)\subseteq \cdots.
$$
The constructible functors are more general than the constructible equivariants in the following sense:
If $f:V\to V'$ is a $d$-constructible equivariant, then there exists a constructible functor ${\mathcal F}:{\mathcal C}_d(V)\to {\mathcal C}_d(V')$ such that ${\mathcal F}(\{v\})=\{f(v)\}$ for all $v\in V$ (see Lemma~\ref{lem:make_functor}).
We will say that a functor ${\mathcal F}:{\mathcal C}_d(V)\to {\mathcal C}_d(V')$ distinguishes
$X_1,X_2\in \Aff(V)$ if $\dim {\mathcal F}(X_1)\neq \dim {\mathcal F}(X_2)$. Here, we use the
convention $\dim( \emptyset)=-\infty$.
\begin{theorem}\label{theo:general}
Suppose that $X_1,X_2\in \Aff(V)$.
We have the following implications:\\[10pt]
$\displaystyle
\begin{array}{lc}
\mbox{(i)} & \mbox{the $d$-constructible functors distinguish $X_1$ and $X_2$}\\
& \Downarrow\\
\mbox{(ii)} & \mbox{$X_1\not\cong_d X_2$}\\
& \Downarrow\\
\mbox{(iii)} & \mbox{$X_1$ and $X_2$ lie in distinct $G(\overline{k})$-orbits}
\end{array}
$
\end{theorem}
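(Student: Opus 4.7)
The statement is a chain of two implications, and I would attack them separately, with very different strategies.

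The implication (ii) $\Rightarrow$ (iii) is essentially a restatement of property (4) of the categories $\mathcal{C}_d(V)$. I would argue by contrapositive: if $X_1$ and $X_2$ lie in the same $G(\overline{k})$-orbit, then property (4) guarantees $X_1 \cong_e X_2$ in $\mathcal{C}_e(V)$ for \emph{every} $e \in \N$, and in particular for the specific value $d$ given in the hypothesis. This contradicts $X_1 \not\cong_d X_2$. So nothing new needs to be done here beyond invoking (4); the real content of that implication already lives in the construction of $\mathcal{C}_d(V)$ itself.

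The implication (i) $\Rightarrow$ (ii) is where the work lies, and the natural strategy is functoriality plus a dimension-preservation lemma. Suppose $\mathcal{F} \in \FF_d(V)$ is a $d$-constructible functor with $\dim \mathcal{F}(X_1) \neq \dim \mathcal{F}(X_2)$, and assume for contradiction that $X_1 \cong_d X_2$ via an isomorphism $\phi \in \Hom_d(X_1, X_2)$. Since $\mathcal{F}$ is a functor to $\mathcal{C}_d(V')$, it carries $\phi$ to an isomorphism $\mathcal{F}(\phi): \mathcal{F}(X_1) \to \mathcal{F}(X_2)$ in $\mathcal{C}_d(V')$. To reach a contradiction it remains to establish the following.

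\medskip
\noindent \textbf{Key Lemma.} \emph{If $Y_1, Y_2 \in \Aff(V')$ are isomorphic in $\mathcal{C}_d(V')$, then $\dim Y_1 = \dim Y_2$.}
\medskip

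Given this lemma, $\mathcal{F}(X_1) \cong_d \mathcal{F}(X_2)$ forces $\dim \mathcal{F}(X_1) = \dim \mathcal{F}(X_2)$, contradicting our hypothesis, so $X_1 \not\cong_d X_2$. The main obstacle is the Key Lemma: it requires a concrete handle on the hom-spaces of $\mathcal{C}_d(V')$ beyond the abstract properties (1)--(4). The plausible route is to exhibit, within the $k$-category structure, a natural invariant of an object $Y \in \Aff(V')$ whose value recovers $\dim Y$ — for instance, by showing that $\Hom_d(\{0\}, Y)$ (or a similar distinguished hom-space built from the affine structure) has dimension controlled by $\dim Y$, and that isomorphic objects have isomorphic hom-spaces from any fixed source. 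A cleaner alternative would be that $\Hom_d(Y_1, Y_2) = 0$ whenever $\dim Y_1 \neq \dim Y_2$, in which case the lemma is immediate; establishing either version is a routine check once the definition of $\mathcal{C}_d(V)$ is in hand, but it is the one piece of the proof that cannot be deduced from the listed axioms alone and must be supplied from the construction in the later sections.
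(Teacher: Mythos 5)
Your treatment of (i)$\Rightarrow$(ii) is essentially the paper's route: functoriality plus the fact that $\cong_d$-isomorphic objects have equal dimension, which is exactly Corollary~\ref{cor:dim_equal}. Two caveats on how you propose to get your Key Lemma, though. The ``cleaner alternative'' --- that $\Hom_d(Y_1,Y_2)=0$ whenever $\dim Y_1\neq\dim Y_2$ --- is false: $\Hom_d(Y_1,Y_2)\neq 0$ whenever some $g\in G(\overline{k})$ satisfies $g\cdot Y_1\subseteq Y_2$ (and for large $d$ this is an equivalence), which certainly can happen with $\dim Y_1<\dim Y_2$. The correct mechanism, and the one the paper uses, is the $R_d^\star$-module structure on $V$ (for $\ell(V)\leq d$): for $f\in\Hom_d(X_1,X_2)\subseteq R_d^\star$ one shows $f\cdot X_1\subseteq f(1)v_2+Z_2$, and feeding an isomorphism and its inverse through this gives $\dim X_1=\dim X_2$ (the unnamed corollary just before Lemma~\ref{lem:zero_nonzero}); Corollary~\ref{cor:dim_equal} is then your Key Lemma combined with the observation that a (co- or contravariant) functor sends isomorphisms to isomorphisms.

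The genuine gap is in (ii)$\Rightarrow$(iii). Property (4) from the introduction is not an available premise: its forward direction (same $G(\overline{k})$-orbit $\Rightarrow X_1\cong_d X_2$ for all $d$) is precisely the contrapositive of the implication you are asked to prove, and the paper only establishes it \emph{inside} the proof of this theorem, so citing (4) is circular. The nontrivial point your proposal skips is a rationality/descent issue: if $g\cdot X_1=X_2$ with $g\in G(\overline{k})$, then evaluation at $g$ is only a $\overline{k}$-valued functional on $R_d$, so $g$ and $g^{-1}$ give mutually inverse morphisms in $\mathcal{C}_d(V)\otimes_k\overline{k}$, not immediately in the $k$-linear category $\mathcal{C}_d(V)$ whose Hom-spaces sit inside $R_d^\star$. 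One must then descend the isomorphism from $\overline{k}$ to $k$; the paper does this in Proposition~\ref{prop:kbar}, whose proof runs through the module-isomorphism statement Proposition~\ref{prop:iso} (a Noether--Deuring-type argument) and Lemma~\ref{lem:varphi_iso}. Without this step (or some substitute for it), the implication (ii)$\Rightarrow$(iii) has not been proved.
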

\begin{theorem}\label{theo:equis}
Suppose that $v_1,v_2\in V$. We have the following implications:\\[10pt]
$\displaystyle
\begin{array}{lc}
\mbox{(i)} & \mbox{the $d$-constructible equivariants distinguish $v_1$ and $v_2$}\\
& \Downarrow\\
\mbox{(ii)} & \mbox{the $d$-constructible functors distinguish $v_1$ and $v_2$}\\
& \Downarrow\\
\mbox{(iii)} & \mbox{$v_1\not\cong_dv_2$}\\
& \Downarrow\\
\mbox{(iv)}& \mbox{$v_1$ and $v_2$ lie in distinct $G(\overline{k})$-orbits}
\end{array}
$
\end{theorem}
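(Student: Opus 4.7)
The plan is to split Theorem~\ref{theo:equis} into its three implications and observe that only (i) $\Rightarrow$ (ii) requires substantive work; the other two arrows follow at once from Theorem~\ref{theo:general}.

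For (ii) $\Rightarrow$ (iii) and (iii) $\Rightarrow$ (iv), I would specialize Theorem~\ref{theo:general} to the singleton affine subspaces $X_i = \{v_i\} \in \Aff(V)$. Under the identification $V \subseteq \Aff(V)$ recalled at the start of Section~\ref{sec:orbit_problems}, the relation $v_1 \cong_d v_2$ coincides with $\{v_1\} \cong_d \{v_2\}$, and the $G(\overline{k})$-orbits of $v_1,v_2$ viewed as points of $\Aff(V)$ agree with their $G(\overline{k})$-orbits inside $V$. Hence the two arrows of Theorem~\ref{theo:general} specialize verbatim to these two implications.

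For (i) $\Rightarrow$ (ii), suppose $f : V \to V'$ is a $d$-constructible equivariant that distinguishes $v_1$ and $v_2$; after swapping roles if necessary, assume $f(v_1) = 0$ and $f(v_2) \neq 0$. Lemma~\ref{lem:make_functor} produces a $d$-constructible functor $\mathcal{F} : \mathcal{C}_d(V) \to \mathcal{C}_d(V')$ with $\mathcal{F}(\{v\}) = \{f(v)\}$ for every $v \in V$. The immediate obstacle is that $\mathcal{F}(\{v_1\}) = \{0\}$ and $\mathcal{F}(\{v_2\}) = \{f(v_2)\}$ are both singletons of dimension $0$, so $\mathcal{F}$ alone does not distinguish $v_1,v_2$ in the functor-theoretic (dimension) sense demanded by the definition. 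My remedy is to post-compose $\mathcal{F}$ with a ``zero detector'' functor $\mathcal{G} : \mathcal{C}_d(V') \to \mathcal{C}_d(V')$ sending $Y \mapsto Y \cap \{0\}$. Then $(\mathcal{G}\circ\mathcal{F})(\{v_1\}) = \{0\}$ has dimension $0$ while $(\mathcal{G}\circ\mathcal{F})(\{v_2\}) = \emptyset$ has dimension $-\infty$, which is the required discrepancy.

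The hard part is verifying that such a zero detector $\mathcal{G}$ actually lies in $\FF_d$, and that $\FF_d$ is closed under composition. Both ought to be structural properties of the class of constructible functors established in Section~\ref{sec:constructible}, but the verification requires unpacking the precise definitions of $d$-constructibility. If no literal $\mathcal{G}$ of the above form is directly available, I would fall back to working on the source side and construct the functor $X \mapsto X \cap f^{-1}(0)$ on $\mathcal{C}_d(V)$ itself directly from the $d$-constructible equivariant $f$: this still sends $\{v_1\}$ to $\{v_1\}$ (dimension $0$) and $\{v_2\}$ to $\emptyset$ (dimension $-\infty$), again distinguishing $v_1$ and $v_2$ as needed.
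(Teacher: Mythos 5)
Your proposal is correct and takes essentially the same route as the paper: the paper gets (ii)$\Rightarrow$(iii) and (iii)$\Rightarrow$(iv) by specializing Theorem~\ref{theo:general}, and proves (i)$\Rightarrow$(ii) via Lemma~\ref{lem:dimF}, whose proof is exactly your main construction---apply Lemma~\ref{lem:make_functor} to get $\mathcal{F}$ with $\mathcal{F}(\{v\})=\{f(v)\}$ and then pass to $\mathcal{F}'(X)=\mathcal{F}(X)\cap\{0\}$, converting the zero/nonzero dichotomy into the dimension gap $0$ versus $-\infty$. Your worry about the ``zero detector'' is settled by the paper's own toolkit (Lemma~\ref{lem:dconstr} applied to $\mathcal{F}$ and the constant functor $\{0\}$ of Lemma~\ref{lem3.1}, plus closure under composition in Definition~\ref{def:functor}), so the fallback $X\mapsto X\cap f^{-1}(0)$ is unnecessary---and would in fact be harder to justify, since a constant functor with value $f^{-1}(0)$ is not among the paper's building blocks.
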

The following proposition follows from Proposition~\ref{prop:counterexample} and 
shows that the implication (i)$\Rightarrow$(ii) in Theorem~\ref{theo:equis} cannot be reversed.

\begin{proposition}\label{prop:not_converse}
For every $d$ there exist examples, where $v_1$
and $v_2$ can be distinguished by $3$-constructible functors, but not by $d$-constructible equivariants.
\end{proposition}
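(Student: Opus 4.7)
The plan is to derive Proposition~\ref{prop:not_converse} as a corollary of the forward-referenced Proposition~\ref{prop:counterexample}, which is presumably where the concrete family of counterexamples is constructed; so the work lies in explaining what such a counterexample must look like and verifying the two halves (positive and negative) of the assertion.

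First, for each $d\in \N$ I would produce a representation $V^{(d)}$ of an algebraic group $G^{(d)}$ together with a pair $v_1,v_2\in V^{(d)}$. A natural source is a Cai--F\"urer--Immerman-style pair: by Theorem~\ref{theo:CFI} the CFI graphs on $O(d)$ vertices are indistinguishable by $\LL_d$ (equivalently by ${\bf WL}_{d-1}$), and via Lemma~\ref{lem:graph_orbit} they correspond to $v_1,v_2\in V^{(d)}=\Mat_{n,n}(k)$ in distinct $\Sigma_n$-orbits. The positive half of the proposition then asks for a $3$-constructible functor $\FF:\category_3(V^{(d)})\to\category_3(W)$ with $\dim \FF(\{v_1\})\neq \dim \FF(\{v_2\})$; this is supplied by the construction underlying Proposition~\ref{prop:counterexample}, and in particular relies on the freedom that functors in $\FF_3(V)$ have to act on affine subspaces of $V$, not merely on points, so that they can extract structural information invisible to any single polynomial covariant.

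Second, I would establish the negative half: no $f\in \Eq_d(V^{(d)})$ distinguishes $v_1$ from $v_2$. Given the inductive definition of $\Eq_d(V)$ in Section~\ref{sec:constructible_equivariants}, the verification proceeds by induction on the depth of construction: one fixes a joint invariant of the pair $(v_1,v_2)$ — a ``$d$-equivalence'' in the CFI sense — and checks that each of the operations permitted in the formation of a $d$-constructible equivariant (sums, tensor products, symmetrizations, projections, compositions, etc., allowed up to depth $d$) preserves this equivalence. The conclusion is that every $f\in \Eq_d(V^{(d)})$ satisfies $f(v_1)=0\iff f(v_2)=0$, so $f$ fails to distinguish them.

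The main obstacle is precisely this second step. The class $\Eq_d(V)$ is rich, being closed under many natural algebraic operations, so ruling out every member of it requires identifying a genuinely robust ``bounded-depth equivariant indistinguishability'' invariant of the pair $(v_1,v_2)$ and tracking it through the inductive closure. The delicate point is aligning the depth parameter $d$ on the equivariant side with the combinatorial depth parameter in the CFI construction so that the two scale compatibly; this is exactly the coordination that Proposition~\ref{prop:counterexample} is designed to supply, and from which Proposition~\ref{prop:not_converse} then follows immediately by restricting attention to single points $\{v_1\},\{v_2\}\in \Aff(V^{(d)})$ and reading off the conclusion.
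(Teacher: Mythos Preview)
Your proposal has a genuine gap, and it stems from a wrong guess about what Proposition~\ref{prop:counterexample} actually contains.

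The paper's example is \emph{not} a CFI pair for the symmetric group. It is a pair of matrices $A,B\in\Mat_{2n,2n}(\C)$ under the conjugation action of $\GL_{2n}(\C)$: $A$ is block-diagonal with $n$ copies of the $2\times 2$ nilpotent Jordan block $C$, and $B$ replaces one copy of $C$ by the zero block. The positive half comes from the module-isomorphism machinery of Section~\ref{sec:module_iso}: there is a functor ${\mathcal C}_3(V)\to T\text{--mod}$ sending a tuple to the corresponding module, so non-isomorphic modules are already separated in ${\mathcal C}_3(V)$. The negative half is pure representation theory of $\GL_{2n}$: if a covariant $\varphi:V\to V'$ with $V'=S^\lambda(U)$ distinguishes $A$ from $B$, one restricts to $\GL(U_1)\times\cdots\times\GL(U_n)$, argues that each tensor factor $Z_i$ in a relevant irreducible summand must have $\dim Z_i\geq 3$ (hence be a nontrivial $\PSL_2$-representation), and then uses Littlewood--Richardson bounds to force $\ell(V')\geq 2n$. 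Taking $n>d/2$ gives the claim.

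By contrast, your negative half --- that no $d$-constructible equivariant separates CFI adjacency matrices --- is not established anywhere in the paper and is not obviously true. Theorem~\ref{theo:graph_implications} only gives the implication $\CC_d\Rightarrow\Eq_{2d}$, not the reverse; the paper never proves that $\Eq_d$-indistinguishability follows from $\CC_d$-indistinguishability. Your sketch (``induct on the construction depth of $f$ and track a CFI-style equivalence'') would be a substantial new theorem, and there is no indication the CFI pebble-game invariant survives all the algebraic operations allowed in Definition~\ref{def:equiv}. So the route you propose is not merely different --- its hard step is unproved, whereas the paper's $\GL_{2n}$ argument closes the gap with a clean representation-theoretic lower bound on $\ell(V')$.
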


If we reformulate the Graph Isomorphism Problem as an Orbit Problem as in Section~\ref{sec:orbit_problems} we get the following results.
\begin{theorem}\label{theo:graph_implications}
Suppose that $k$ is a field of characteristic 0 or characteristic $p$ with $p>n$.
Suppose that $\Gamma_1,\Gamma_2$ are graphs on $n$-vertices, or more generally,
structures on sets with $n$ elements. We have the following implications:\\[10pt]
$\displaystyle
\begin{array}{lc}
\mbox{(i)} & \mbox{the language $\LL_d$ distinguishes $\Gamma_1$ and $\Gamma_2$}\\
& \Downarrow\\
\mbox{(ii)} &\mbox{the language $\CC_d$ distinguishes $\Gamma_1$ and $\Gamma_2$}\\
& \Updownarrow\\
\mbox{(iii)} &\mbox{the $(d-1)$-dim. Weisfeiler-Lehman algorithm distinguishes $\Gamma_1$ and $\Gamma_2$}\\
& \Downarrow\\
\mbox{(iv)} &\mbox{the $(2d)$-constructible equivariants distinguish $A_{\Gamma_1}$ and $A_{\Gamma_2}$}\\
& \Downarrow\\
\mbox{(v)} &\mbox{the $(2d)$-constructible functors distinguish $A_{\Gamma_1}$ and $A_{\Gamma_2}$}\\
& \Downarrow\\
\mbox{(vi)} &\mbox{$A_{\Gamma_1}\not\cong_{2d}A_{\Gamma_2}$}\\
& \Downarrow\\
\mbox{(vii)} &\mbox{$A_{\Gamma_1}$ and $A_{\Gamma_2}$ do not lie in the same $\Sigma_n$-orbit}\\
& \Updownarrow\\
\mbox{(viii)} &\mbox{$\Gamma_1\not\cong\Gamma_2$}
\end{array}
$
\end{theorem}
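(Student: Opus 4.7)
The proof is a chain of eight implications, most of which either reduce to results already established in the paper or are immediate from definitions; the only substantive new work is in the step (iii)$\Rightarrow$(iv). I would organize the proof by addressing each arrow in order.

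The easy steps first. The implication (i)$\Rightarrow$(ii) is immediate because $\CC_d$ extends $\LL_d$ by adding the counting quantifiers $\exists_m$, so every formula of $\LL_d$ is already a formula of $\CC_d$. The equivalence (ii)$\Leftrightarrow$(iii) is exactly Theorem~\ref{theo:CFI}, quoted from \cite{CFI}. For (iv)$\Rightarrow$(v), Lemma~\ref{lem:make_functor} promotes a $(2d)$-constructible equivariant $f:V\to V'$ to a $(2d)$-constructible functor $\category_{2d}(V)\to \category_{2d}(V')$ sending $\{v\}$ to $\{f(v)\}$; if $f$ distinguishes $A_{\Gamma_1}$ from $A_{\Gamma_2}$ in the sense of one image being zero and the other not, then after passing through the functor the corresponding singleton objects have different dimensions (one is empty, with $\dim=-\infty$), so the functor distinguishes them. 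The implications (v)$\Rightarrow$(vi)$\Rightarrow$(vii) are precisely parts (i)$\Rightarrow$(ii)$\Rightarrow$(iii) of Theorem~\ref{theo:general} applied to the objects $\{A_{\Gamma_1}\}, \{A_{\Gamma_2\}}\in\Aff(\Mat_{n,n}(k))$ for the group $G=\Sigma_n$ acting by conjugation. Finally (vii)$\Leftrightarrow$(viii) is Lemma~\ref{lem:graph_orbit}.

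The crux is (iii)$\Rightarrow$(iv): starting from a run of the $(d-1)$-dimensional Weisfeiler--Lehman algorithm that distinguishes $\Gamma_1$ from $\Gamma_2$, I need to produce a $(2d)$-constructible equivariant $f:\Mat_{n,n}(k)\to V'$ with $f(A_{\Gamma_1})=0$ but $f(A_{\Gamma_2})\neq 0$ (or vice versa). The plan is to translate the WL color refinement into a construction of equivariants stage by stage. A $(d-1)$-tuple of vertices is naturally encoded as a rank-one element of the $\Sigma_n$-representation $(k^n)^{\otimes(d-1)}$, and the adjacency matrix $A_\Gamma\in\Mat_{n,n}(k)=k^n\otimes k^n$ provides the initial data; the $(d-1)$-fold tensor power of $A_\Gamma$, sitting inside $(k^n)^{\otimes 2(d-1)}$, records for every $(d-1)$-tuple its initial WL color. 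Each WL refinement round replaces the color of a tuple by the multiset of colors of all tuples obtained by changing one coordinate; this is expressible as a $\Sigma_n$-equivariant polynomial map of bounded degree into a larger tensor representation, where multisets are replaced by sums of symmetric tensor powers (which requires the characteristic hypothesis so that symmetrization idempotents are available over $k$). Iterating these maps gives equivariants in $\Eq_{2d}$, and projecting onto an isotypic component that separates the stable colorings of $\Gamma_1$ and $\Gamma_2$ yields the required distinguishing equivariant.

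The main obstacle, as I see it, is bookkeeping: the WL algorithm may perform many refinement rounds (polynomial in $n$), but the constructibility depth $2d$ must remain bounded independently of that number of rounds. What makes this work is that the constructibility measure tracks the tensor order of the intermediate representations rather than the number of operations, and the refinement maps preserve the tensor order up to a fixed multiplier, since they act on the symbol level of the coloring rather than reprocessing the input. Making this precise, and matching the bound $2d$ exactly (rather than some larger constant), is the heart of the argument; once done, the remaining implications are direct invocations of Theorem~\ref{theo:general}, Lemma~\ref{lem:make_functor}, Lemma~\ref{lem:graph_orbit}, and Theorem~\ref{theo:CFI}.
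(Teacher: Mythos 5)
Your treatment of the peripheral arrows is fine and matches the paper: (i)$\Rightarrow$(ii) is trivial, (ii)$\Leftrightarrow$(iii) is Theorem~\ref{theo:CFI}, (iv)$\Rightarrow$(v)$\Rightarrow$(vi)$\Rightarrow$(vii) is exactly Theorem~\ref{theo:equis} (equivalently Lemma~\ref{lem:make_functor} plus Theorem~\ref{theo:general}), and (vii)$\Leftrightarrow$(viii) is Lemma~\ref{lem:graph_orbit}. But the one arrow that carries all the content, (iii)$\Rightarrow$(iv), is not proved in your proposal; you sketch a plan and then concede that ``making this precise \dots is the heart of the argument.'' The paper does not simulate the Weisfeiler--Lehman refinement at all: it proves (ii)$\Rightarrow$(iv) via Corollary~\ref{cor:orbit_equivar}, which rests on the theorem of Section 3.3 that \emph{every} formula $\varphi(x_1,\dots,x_d)$ of $\CC_d$ is represented by an equivariant in $\EE_{2d}(V)$, built by structural induction on $\varphi$: atomic relations and equalities give linear equivariants into $U^{\otimes d}$, conjunction is the $\star$-product, negation is $\jj_d-f$, and the counting quantifier $\exists_b x_i$ is $[q(t)]\circ\pr_i\circ f$ with an interpolating polynomial $q$ --- this last step is precisely where the hypothesis $\operatorname{char}k=0$ or $p>n$ is used. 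Since WL-distinguishability is converted into a single distinguishing closed $\CC_d$-formula by Theorem~\ref{theo:CFI}, only one formula ever needs to be represented, and the $2d$ bound is automatic because everything lives in $U^{\otimes m}$ with $m\leq d$.

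Your alternative route has concrete obstructions that the sketch does not address. First, the colour-refinement step compares multisets of colours; encoding these by symmetric powers blows the tensor order far past $2d$, and encoding them by linear sums of colour vectors forces a ``re-encoding'' step (equal sums must be mapped to new linearly independent colour indicators) which you have not exhibited as a constructible equivariant --- in the paper this role is played by $[q(t)]$ inside the formula induction, not by any global recolouring map. Second, the budget in Definition~\ref{def:equiv} constrains the $\ell$-values of the \emph{pieces}: the bilinear tensor map and the $\star$-product on a space $Z$ need $\ell(Z)+\ell(Z)\leq 2d$, so manipulating the colour-equivalence relation on $(d-1)$-tuples, an object of $\ell=2(d-1)$, by such products already fails for $d>2$; composing many rounds is indeed free, but each round is not obviously within budget. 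Third, the endgame is wrong as stated: ``distinguish'' in (iv) means $f(A_{\Gamma_1})=0$ and $f(A_{\Gamma_2})\neq 0$ (or vice versa), and projecting onto an isotypic component of a $\Sigma_n$-representation neither produces such a dichotomy nor is it argued to separate stable colourings; the zero/nonzero dichotomy in the paper comes from representing a closed formula $\varphi$ with $\Gamma_1\models\varphi$, $\Gamma_2\models\neg\varphi$, so that $f(A_{\Gamma_2})=0\neq f(A_{\Gamma_1})$ essentially by construction. To repair your proof, replace the WL-simulation by the paper's route: invoke Theorem~\ref{theo:CFI} to get a distinguishing closed formula in $\CC_d$, then invoke Corollary~\ref{cor:orbit_equivar} (equivalently, the representability theorem for $\CC_d$-formulas by $\EE_{2d}(V)$) to get the distinguishing $(2d)$-constructible equivariant.
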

\begin{theorem}\label{theo:polytime}
Suppose that $V=\Mat_{n,n}(\F_p)$ is a representation of $\Sigma_n$ over $\F_p$ as in Section \ref{sec:orbit_problems}, where $p=p(n)$ is a prime for all $n$  and $\log p(n)$ grows polynomially.
Then one can check whether two objects in ${\mathcal C}_d(V)$ are isomorphic in polynomial time.
\end{theorem}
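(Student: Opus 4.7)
The plan is to treat $d$ as a fixed constant and establish polynomial bounds (in $n$ and $\log p$) for three subtasks: storing objects compactly, computing the Hom-spaces $\Hom_d(X_1,X_2)$, and detecting an invertible element inside them.

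First, each object is an affine subspace $X = v + Z$ of $V = \Mat_{n,n}(\F_p)$, encoded by an offset $v$ together with a basis of $Z$; this uses $O(n^4 \log p)$ bits. Second, one traces through the recursive construction of $\mathcal{C}_d(V)$. At each inductive step, new morphisms are introduced by explicit linear and tensorial conditions on the data of source and target, so for fixed $d$ the space $\Hom_d(X_1,X_2)$ embeds into an ambient $\F_p$-vector space of dimension polynomial in $n$ and is carved out by a linear system of polynomial size. A basis is then produced by Gaussian elimination over $\F_p$; the same argument yields bases of $\End_d(X_1)$, $\End_d(X_2)$, and $\Hom_d(X_2,X_1)$.

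The third step, detecting an invertible $f \in \Hom_d(X_1,X_2)$, is where I expect the main difficulty. A brute-force search is exponential in $\log p$, so an algebraic reduction is needed. I would form the finite-dimensional $\F_p$-algebra $R = \End_d(X_1) \times \End_d(X_2)$ and enlarge it to an algebra $S$ whose off-diagonal components are $\Hom_d(X_1,X_2)$ and $\Hom_d(X_2,X_1)$, with multiplication given by composition in $\mathcal{C}_d(V)$. An isomorphism $X_1 \cong_d X_2$ in $\mathcal{C}_d(V)$ corresponds to $\id_{X_1}$ and $\id_{X_2}$ being conjugate idempotents in $S$, equivalently to the right $S$-modules $\id_{X_1} S$ and $\id_{X_2} S$ being isomorphic. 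Isomorphism of finitely generated modules over a finite-dimensional algebra over a finite field is decidable in polynomial time by the Chistov-Ivanyos-Karpinski algorithm, together with the Brooksbank-O'Brien-Wilson machinery for explicit algebra isomorphism. Combining this with the first two steps gives the desired polynomial-time isomorphism test.

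The main obstacle is precisely this module-isomorphism step: the linear-algebraic parts are routine once one checks, by direct inspection of the recursive definition, that the ambient dimensions remain polynomial in $n$ for fixed $d$; but detecting invertibility in $\Hom_d(X_1,X_2)$ cannot be done by enumeration and requires the nontrivial polynomial-time algorithms for module isomorphism over finite fields. The hypothesis that $\log p(n)$ grows polynomially is used exactly to ensure that the arithmetic within $\F_p$ and the coefficients produced by Gaussian elimination contribute only a polynomial factor to the overall running time.
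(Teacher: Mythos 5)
Your proposal is correct and rests on the same engine as the paper: compute the spaces $\Hom_d(X_1,X_2)$ by linear algebra inside $R_d^\star$ (whose dimension is polynomial in $n$ for fixed $d$), then reduce isomorphism testing in the $k$-category to module-isomorphism testing over $\F_p$, which is polynomial time by Theorem~\ref{theo:module_polytime} (Chistov--Ivanyos--Karpinski, Brooksbank--Luks). The reduction itself, however, is genuinely different. The paper sets $T=\Hom_d(X_2,X_2)$, tests whether $\Hom_d(X_1,X_2)\cong T$ as left $T$-modules, and---since that alone is not sufficient---uses Lemma~\ref{lem:varphi_iso} to convert an explicit module isomorphism $\psi$ into the candidate morphism $\varphi=\psi(1)$, whose two-sided invertibility is then a linear-algebra check; this needs an algorithm that outputs an isomorphism, not just a yes/no answer. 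You instead pass to the two-object algebra $S$ with corners $\End_d(X_1)$, $\End_d(X_2)$, $\Hom_d(X_1,X_2)$, $\Hom_d(X_2,X_1)$ and decide whether $\id_{X_1}S\cong\id_{X_2}S$ as right $S$-modules; since a right-module isomorphism is left multiplication by some $a\in\Hom_d(X_1,X_2)$ whose inverse is left multiplication by some $b\in\Hom_d(X_2,X_1)$ with $ba=\id_{X_1}$ and $ab=\id_{X_2}$, this is exactly isomorphism in ${\mathcal C}_d(V)$, so your equivalence is valid, only a decision procedure is needed, and no analogue of Lemma~\ref{lem:varphi_iso} is required. Both routes tacitly use that $\End_d(X_i)$ contains an identity; this holds because evaluation $\sigma_e$ at the identity of $G$ annihilates $S(X_i,X_i)$ and hence the truncated ideal $I_d(X_i,X_i)$, so it descends to a unit---the same fact that makes ${\mathcal C}_d(V)$ a category. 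Two small corrections to your second step: ${\mathcal C}_d(V)$ is not built by recursively introducing morphisms; rather $\Hom_d(X_1,X_2)=(R_d/I_d(X_1,X_2))^\star$, where $I_d(X_1,X_2)$ is the $d$-truncated ideal generated by $S(X_1,X_2)$ and is computed by iterating the operation $S\mapsto (S)_d$ inside $R_d$ until it stabilizes (at most $\dim R_d$ iterations), which still yields the polynomial bound you want; and the constructive machinery you attribute to Brooksbank--O'Brien--Wilson is not needed on your route (the relevant reference for explicit module isomorphisms is Brooksbank--Luks, as cited in the paper).
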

The proofs of Theorem~\ref{theo:general}, \ref{theo:equis}, \ref{theo:graph_implications}  and \ref{theo:polytime} are in Section~\ref{sec:cat_iso}.

Because of Proposition~\ref{prop:not_converse} and Theorem~\ref{theo:graph_implications} one might believe that constructible functors are more powerful
in distinguishing graphs than the Weisfeiler-Lehman algorithm. The following theorem shows
that this is the case when working over the field $\F_2$:
\begin{theorem}\label{theo:CFInoniso}
If $k=\F_2$ and $\Gamma_1,\Gamma_2$ are nonisomorphic (colored) graphs constructed following the Cai-F\"urer-Immerman method,
then there exists a $3$-constructible functor which distinguishes $A_{\Gamma_1}$ and $A_{\Gamma_2}$. 
In particular $A_{\Gamma_1}$ and $A_{\Gamma_2}$ are not isomorphic in ${\mathcal C}_3(V)$.
\end{theorem}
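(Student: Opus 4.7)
The plan is to exploit the special feature that, over $\F_2$, the non-isomorphism of CFI graphs is detected by a linear-algebraic (cohomological) invariant, whereas over $\Q$ or in purely combinatorial terms no such invariant with bounded locality exists. Concretely, in the Cai--F\"urer--Immerman construction each edge $e$ of a base graph $G_0$ is equipped with a twist bit $t_e \in \F_2$, and two CFI graphs are isomorphic iff their twist assignments are cohomologous, i.e.\ differ by a sum $\sum_{v} \partial \lambda_v$ of vertex-local gauge transformations. The global obstruction is a single $\F_2$-class, and my goal is to produce a $3$-constructible functor whose output dimension reads off this class.

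First, I would set up the $\F_2$-linear structure explicitly: working with $V=\Mat_{n,n}(\F_2)$ and the adjacency matrix $A_\Gamma$, I would identify the gadget vertices of the CFI graph as a canonical $\Sigma_n$-equivariant subset by spectral/combinatorial conditions (degree, common-neighborhood patterns) that are expressible inside ${\mathcal C}_3(V)$. Intersecting such subspaces and taking spans produces the ``local'' pieces $X_v$ and $E_{uv}$ as subobjects in ${\mathcal C}_3(V)$. The second step is to build the twist locally: for each pair $(u,v)$, the structure of $E_{uv}$ determines a $\F_2$-linear identification $X_u \dashrightarrow X_v$ up to a sign/flip, and this identification can be encoded by the rank of a certain $\F_2$-span constructed from the relevant blocks of $A_\Gamma$. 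Crucially, all these constructions involve only three ``active'' objects at a time (the two endpoints and the edge gadget), hence fit inside $\FF_3(V)$.

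The third and main step is to assemble the global cocycle. Here I would use that a $3$-constructible functor can form iterated kernels/cokernels of sums of its previously constructed morphisms. Running over all edges of $G_0$ and composing the local identifications around cycles produces a representation-theoretic object (e.g.\ an $\F_2$-module computed as the image of a globally defined linear map) whose dimension equals $2^{\beta_1(G_0)}$ in the untwisted case and drops (or changes parity) in the twisted case---this is the CFI invariant expressed as a dimension over $\F_2$. By Lemma~\ref{lem:make_functor} the resulting assignment is a $3$-constructible functor ${\mathcal F}$, and $\dim {\mathcal F}(A_{\Gamma_1}) \neq \dim {\mathcal F}(A_{\Gamma_2})$. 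Invoking Theorem~\ref{theo:general}, (i)$\Rightarrow$(ii), gives $A_{\Gamma_1} \not\cong_3 A_{\Gamma_2}$.

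The hard part, and the place where the proof must be technically careful, is the verification that the global cocycle assembly really stays inside ${\mathcal C}_3(V)$ rather than silently requiring $d$-many simultaneous vertex references as the base graph $G_0$ grows. The argument must exploit that sums over edges and the span operation can be performed one edge at a time: at each stage only three vertices of $V$ enter the construction, while the growing object of interest is carried along as a subobject in ${\mathcal C}_3(V)$ whose dimension records the partial twist parity. Once this locality is made rigorous---essentially by showing that the CFI cocycle can be computed by a $3$-local iterative procedure in $\F_2$-linear algebra, in contrast to the combinatorial $3$-WL counting which provably cannot---the theorem follows.
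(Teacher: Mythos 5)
There is a genuine gap: you never actually produce a $3$-constructible functor, and the mechanism you sketch for producing one is not available in the calculus the paper defines. A $d$-constructible functor is built only from the operations of Definition~\ref{def:functor}: constant functors, functors induced by $G$-equivariant \emph{linear} maps, direct sums, tensor products, the duality $X\mapsto X^+$, compositions, and (via Lemma~\ref{lem:dconstr}) intersections. In particular everything must be $\Sigma_n$-equivariant and uniform in the input. Your plan to ``identify the gadget vertices $X_v$ and edge pieces $E_{uv}$,'' to fix local identifications $X_u\dashrightarrow X_v$ edge by edge, and to assemble a cocycle by running over the edges of the base graph, inherently refers to individual vertices and edges of a specific graph; such choices are not $\Sigma_n$-equivariant and cannot be encoded by the allowed operations. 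Likewise, ``iterated kernels/cokernels of sums of previously constructed morphisms'' is not one of the permitted constructions (the functors here are very faithful and are determined by their action on objects; morphism spaces are subspaces of $R_d^\star$, not maps one takes kernels of), and Lemma~\ref{lem:make_functor} only converts a $d$-constructible \emph{equivariant} into a functor, whereas your ``resulting assignment'' is never exhibited as such. Finally, the quantitative claim that the assembled object has dimension $2^{\beta_1(G_0)}$ in the untwisted case and drops in the twisted case is asserted, not proved; you explicitly flag the locality verification as ``the hard part'' and leave it open, but that verification \emph{is} the theorem.

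Your underlying intuition --- that over $\F_2$ the CFI twist is visible as a linear-algebraic dimension --- is the right one, but the paper realizes it far more directly and globally. One takes the block $B=\begin{pmatrix}A_{2,1} & A_{2,2}\end{pmatrix}$ of the adjacency matrix corresponding to rows indexed by the second color class; Lemma~\ref{lem:not_same_rank} computes by hand that $\rank_{\F_2}(B)=3|E|+|X|-2$ for $\Gamma(Q)$ but $3|E|+|X|-1$ for $\Gamma'(Q)$, the single-unit discrepancy being exactly the twist. The functor is then a short explicit composition of allowed operations: project $A_\Gamma$ onto the color-class vector $\sum_{x\in X_2}x$, apply the equivariant diagonal $\delta(x)=x\otimes x$ to get the projection onto the span of $X_2$, compose with the $U\otimes U$-component of $A_\Gamma$ via the tensor-and-evaluate functors, so that the resulting object is $\im B\subseteq U$, whose dimension is the rank. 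No per-edge bookkeeping, no cocycle assembly inside the category, and no equivariance issue arises, because the invariant is the rank of one globally and equivariantly defined matrix. To repair your argument you would need either to carry out this kind of explicit construction or to prove that your cohomological invariant can be expressed through the listed operations --- which, as written, it cannot.
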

The proof of Theorem~\ref{theo:CFInoniso} is in Section~\ref{sec:CFI}.

\begin{algorithm}[The algorithm ${\bf AC}_d$]\label{alg}
To check whether two graphs $\Gamma_1$ and $\Gamma_2$ on $n$ vertices are isomorphic, we can determine
whether $A_{\Gamma_1}$ and $A_{\Gamma_2}$ are isomorphic in ${\mathcal C}_{d}(V)$,
where we work over the field $\F_p$ and $p$ runs over all primes $\leq 2n$. 
\end{algorithm}
Note
that there exists a prime between $n$ and $2n$ by Bertrand's postulate (see~\cite{Rama}).
By Theorem~\ref{theo:polytime}, ${\bf WL}$ reduces to ${\bf AC}$, because ${\bf AC}_{2d+2}$ distinguishes any two graphs
that are distinguished by ${\bf WL}_d$. On the other hand, for every $d$,  Cai-F\"urer and Immerman constructed  pairs of graphs that cannot be
distinguished by ${\bf WL}_d$. By Theorem~\ref{theo:CFInoniso}, these graphs are distinguished by ${\bf AC}_3$. This shows that
${\bf AC}$ cannot be reduced to ${\bf WL}$.

\section{The construction of the approximate categories}
\subsection{Truncated ideals}
To define the approximate categories, we will need the notion of a truncated ideal. 
Suppose that $k$ is a field, and $R$ is a finitely generated $k$-algebra
with a filtration
$$
R_0\subseteq R_1\subseteq R_2\subseteq \cdots
$$
such that $R_0=k$ and $R_i$ is a finite-dimensional vector space for all $i$.
\begin{definition}
If $S\subseteq R_d$ then we define 
$$(S)_d=\sum_{e=0}^d(S\cap R_e)R_{d-e}.
$$
\end{definition}
\begin{definition}
A subset $S\subseteq R_d$ is called a {\em $d$-truncated ideal} if $(S)_d=S$.
\end{definition}
We have a chain
$$
(S)_d\subseteq ((S)_d)_d\subseteq (((S)_d)_d)_d\subseteq \cdots.
$$
Since $R_d$ is finite dimensional, this chain stabilizes to a
subspace of $R_d$ which we will denote by $((S))_d$.
It is clear that $((S))_d$ is the smallest $d$-truncated ideal
containing $S$. We will call it the {\em $d$-truncated ideal generated by $S$}.
\begin{example}
 Consider the polynomial ring $k[x,y]$ in two variables with the usual grading.
We have 
$$y-x^2=-x(x-y^2)-y(xy-1)
\in (x-y^2,xy-1),$$
but $y-x^2\not\in ((x-y^2,xy-1))_2$.
\end{example}
\begin{remark}
Much of the theory of Gr\"obner basis generalizes to truncated ideals. 
Suppose that $R=k[x_1,\dots,x_n]$ is the polynomial ring.
In a polynomial ring we can 
choose a monomial ordering which is compatible with the grading: if one monomial has higher degree than another
monomial, then it is larger in the monomial ordering. A subset ${\mathscr G}$ of a $d$-truncated ideal $J$
is a truncated Gr\"obner bases if the ideal generated by the leading monomials of elements of $J$ is
the same as the ideal generated by the leading monomials of elements of ${\mathscr G}$. There is
also an analog of Buchberger's algorithm. Starting with a set of generators of $J$, one obtains
a truncated Gr\"obner bases by reducing S-polynomials whose total degree is $\leq d$.
Since $R_d$ is a finite dimensional vector space, computations with truncated ideals can be done
by just using linear algebra. However, using truncated Gr\"obner bases exploits the ring structure
and may speed up the computations. For complexity bounds, the linear algebra approach will
be good enough, so we will not explore the truncated Gr\"obner bases in detail here.
\end{remark}

\begin{proposition}\label{prop:3}
Suppose that $R_e=R_1^e$ for all $e\geq 1$, i.e., $R_e$ is spanned by all products
$f_1f_2\cdots f_e$ with $f_1,\dots,f_e\in R_1$.
There exists a constant $C(d)$ (depending on $d$, $R$ and the filtration)
such that $((S))_e=(S)\cap R_e$ for all $e\geq C(d)$ and all $S\subseteq R_d$.
\end{proposition}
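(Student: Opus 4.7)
My approach is to work with the Rees algebra $\mathfrak{R}:=\bigoplus_{e\geq 0} R_e$, graded by $e$, with multiplication inherited from $R$. The hypothesis $R_e=R_1^e$ together with finite-dimensionality of $R_1$ implies $\mathfrak{R}$ is generated over $k$ by the finite-dimensional space $R_1$ in degree $1$, hence is a finitely generated $k$-algebra and therefore Noetherian by Hilbert's basis theorem.

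I would first establish three preliminary facts about truncated ideals: (a) $((S))_e\subseteq ((S))_{e+1}$ under the inclusion $R_e\hookrightarrow R_{e+1}$, since $((S))_{e+1}\cap R_e$ is itself an $e$-truncated ideal containing $S$; (b) $R_g\cdot ((S))_e\subseteq ((S))_{e+g}$ for every $g\geq 0$, by combining (a) with the defining property of $(e+g)$-truncated ideals applied to the slot $f=e$; (c) the ordinary ideal $\bigcup_e ((S))_e\subseteq R$ equals $(S)$, since the union is an $R$-ideal (by (b)) containing $S$ and contained in $(S)$. From these it follows that $\mathcal{J}(S):=\bigoplus_e ((S))_e$ is a graded ideal of $\mathfrak{R}$, contained in the graded ideal $\mathcal{I}(S):=\bigoplus_e ((S)\cap R_e)$.

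For a fixed $S$, I would apply Noetherianness of $\mathfrak{R}$ to choose finite homogeneous generators $f_1,\ldots,f_m$ of $\mathcal{I}(S)$, of Rees-degrees $e_1,\ldots,e_m$. By (c), each $f_i\in(S)=\bigcup_e ((S))_e$ lies in $((S))_{e'_i}$ for some $e'_i\geq e_i$. Setting $c(S):=\max_i e'_i$, fact (a) gives $f_i\in ((S))_e$ for all $e\geq c(S)$; then the $e$-truncated property of $((S))_e$ together with the Rees-generation identity $(S)\cap R_e=\sum_i R_{e-e_i}\cdot f_i$ (valid for $e\geq \max_i e_i$) forces $(S)\cap R_e\subseteq ((S))_e$. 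Combined with the trivial reverse inclusion, this proves the proposition for this particular $S$.

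The main obstacle is upgrading the pointwise bound $c(S)$ to a uniform bound $C(d)$ independent of $S$. I would work in the universal setting: fix a basis $b_1,\ldots,b_N$ of $R_d$ and introduce $A:=k[x_{ij}]_{1\leq i,j\leq N}$, so the universal subspace generated by $s_i:=\sum_j x_{ij}b_j\in R_d\otimes_k A$ lives in the (still Noetherian) ring $T:=R\otimes_k A$. The Rees algebra $\mathfrak{T}:=\mathfrak{R}\otimes_k A$ is also Noetherian, and applying the argument of the previous paragraph in this relative setting produces a universal Rees-degree bound. The subtlety is that the operation $(-)\cap R_e$ does not commute with base change $\otimes_A k_x$ in general; I would handle this by invoking generic flatness on $\mathrm{Spec}\,A$ to extract an open dense subset on which specialization commutes with all relevant operations, then apply Noetherian induction on the closed complement. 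Patching the finitely many strata-wise bounds then yields the desired uniform $C(d)$.
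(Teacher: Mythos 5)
Your treatment of a fixed $S$ is correct and follows a genuinely different route from the paper: facts (a)--(c) hold, $\bigoplus_e ((S))_e$ is indeed a graded ideal of the Noetherian Rees algebra $\bigoplus_e R_e$ (Noetherian because $R_e=R_1^e$ with $R_1$ finite dimensional), and your finite-generation argument correctly produces a bound $c(S)$ with $((S))_e=(S)\cap R_e$ for $e\geq c(S)$. The paper argues differently: it presents $R$ as the image of a polynomial ring via $\gamma:k[x_1,\dots,x_n]\to R$ with the $x_i$ spanning $R_1$, lifts a spanning set of $S$ to polynomials of degree $\leq d$, adjoins fixed degree-$\leq l$ generators of $\ker\gamma$, and runs Buchberger's algorithm; the uniform constant $C(d)$ is imported from Weispfenning's universal degree bound for Gr\"obner basis computations (depending only on $n,d,l$, not on the coefficients of $S$), and a degree-bounded division representation then pushes membership down into $((S))_e$. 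Thus the paper gets uniformity in $S$ for free from the cited effective bound, whereas your route is more self-contained commutative algebra but must manufacture the uniformity and yields a non-effective constant.

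That uniformity step is where your proposal has a genuine gap as written. The phrase ``an open dense subset on which specialization commutes with all relevant operations'' is not justified: there are infinitely many relevant conditions, one for each degree $e$, and generic flatness applied degreewise only gives dense opens $U_e$ depending on $e$, whose intersection need not contain a dense open. The repair is to redeploy your own finiteness trick in the relative setting. Over an irreducible stratum $B$ with coordinate ring $\mathcal O(B)$ and universal ideal $(\mathcal S)\subseteq R\otimes_k\mathcal O(B)$, the graded ideal $\bigoplus_e\bigl((\mathcal S)\cap(R_e\otimes\mathcal O(B))\bigr)$ of the relative Rees algebra is finitely generated; fix homogeneous generators $g_1,\dots,g_m$ and apply generic freeness to the associated graded of the image filtration on $(R\otimes\mathcal O(B))/(\mathcal S)$ (a finitely generated graded module over the relative Rees algebra), which produces one dense open on which, for every $e$ at once, $(S_x)\cap R_e=\sum_i R_{e-\deg g_i}\,g_i(x)$. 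Separately, a certificate $g_i\in((S_\eta))_{c_i}$ at the generic point specializes, after clearing denominators, to $g_i(x)\in((S_x))_{c_i}$ on a dense open, because $((\,\cdot\,))_c$ is built by explicit spans of products, so memberships only specialize downward. With these points supplied, your strata-wise bound follows exactly as in your fixed-$S$ argument, and Noetherian induction on closed subsets of the (Noetherian) parameter space legitimately patches the finitely many bounds into a single $C(d)$. So the plan is workable, but the decisive reduction from infinitely many degrees to finitely many generator conditions, and the identification of the finitely generated modules to which generic freeness is applied, must be made explicit; without them the final paragraph is an assertion rather than a proof.
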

\begin{proof}
Define a ring homomorphism
$$
\gamma:k[x_1,\dots,x_n]\to R.
$$
such that $\gamma(x_1),\dots,\gamma(x_n)$ span $R_1$.
Suppose that $h_1,\dots,h_r\in k[x_1,\dots,x_n]$ generate the kernel of $\gamma$,
and let $l$ be the maximum of the degrees of $h_1,\dots,h_r$.
Assume that $S\subseteq R_d$ is a subspace spanned by $f_1,\dots,f_s$.
For all $i$, choose $\widetilde{f}_i\in k[x_1,\dots,x_n]$ of degree $\leq d$
with $\gamma(\widetilde{f}_i)=f_i$.
Let $J$ be the ideal generated by the set ${\mathcal G}=\{\widetilde{f}_1,\dots,\widetilde{f}_s,h_1,\dots,h_r\}$.
Then we have $\gamma(J)=(S)$.
We can apply Buchberger's algorithm to the generator set ${\mathcal G}$ to obtain
a Gr\"obner basis for the ideal $J$.  It was shown in \cite{W} that there exists a universal bound $C(d)$,
(depending only on $d$, $n$, and $l$) such that all polynomials in the reduced Gr\"obner basis,
and all polynomials appearing in intermediate steps of  Buchberger's algorithm
have degree $\leq C(d)$.  Following the Buchberger algorithm, it is easy to see
that $\gamma(h)\in ((S))_{C(d)}$ for all elements $h$ in the Gr\"obner basis of $J$.
Suppose that $f\in (S)\cap R_e$ and $e\geq C(d)$. We can lift $f$ to an element $\widetilde{f}$
such that $\deg(\widetilde{f})\leq e$ and $\gamma(\widetilde{f})=f$.
We can write $\widetilde{f}=\sum_{i=1}^t a_iu_i$ where $u_1,\dots,u_t$ are elements
of the Gr\"obner basis, and $\deg(a_iu_i)\leq e$ for all $i$.
From this follows that
$$
f=\gamma(\widetilde{f})=\sum_{i=1}^t\gamma(a_i)\gamma(u_i)\in \sum R_{e-j}\big( ((S))_e\cap R_{j}\big)=(((S))_e)_e=((S))_e.
$$
\end{proof}

\subsection{Coalgebras associated to algebraic groups}
Suppose that $G$ is a linear algebraic group over $k$.
Let $R:=k[G]$ be the coordinate ring of $G$. The identity element $e\in G$
is defined over the field $k$.
The multiplication $G\times G\to G$ corresponds to a homomorphism
of $k$-algebras
$$
\Delta:R\to R\otimes R,
$$
where $\otimes$ denotes the tensor product as $k$-vector spaces.
The ring $R$ is a Hopf algebra with co-multiplication $\Delta$,
and counit $\sigma_e:R\to k$, where $\sigma_e$ is evaluation at $e\in G$.
The inverse function $G\to G$ defined by $g\mapsto g^{-1}$ defines a antipode
map $\iota:R\to R$. 

Suppose for the moment that $k$ is algebraically closed.
The group $G$ acts on itself by left multiplication
and it acts on the right by right multiplication.
These actions correspond to a left and right action
of $G$ on $R$. If $g\in G$, then $g$ acts on $R$ on the right as
the automorphism
$$
(\sigma_g\otimes \id)\circ \Delta:R\to R
$$
where $\sigma_g:R\to k$ is evaluation at $g$.
The element $g$ acts on the left by the automorphism
$$
(\id\otimes \sigma_g)\circ \Delta:R\to R.
$$
A subspace $W\subseteq R$ is stable under the right action if
$$
\Delta(W)\subseteq R\otimes W
$$
and stable under the left action if
$$
\Delta(W)\subseteq W\otimes R.
$$
It is stable under both actions if
$$
\Delta(W)\subseteq W\otimes W.
$$

Let $k$ again be an arbitrary field, and
suppose that $W\subseteq R$ is a subspace such that
\begin{enumerate}
\item $W$ contains $k$;
\item $W$ generates $R$;
\item $\Delta(W)\subseteq W\otimes W$;
\item $\iota(W)\subseteq W$.
\end{enumerate}
We will call such a subspace $W$ a {\em stable generating subspace}.
We define a filtration on $R$ by $R_0:=k$ and $R_d:=W^d$ for $d>0$. 
We have $\Delta(R_d)\subseteq R_d\otimes R_d$ and $\iota(R_d)\subseteq R_d$, so $R_d$ is a co-associative co-algebra.
The dual $R_d^\star$ of $R_d$ is an associative algebra.
\begin{example}\label{ex:Ga}
 Suppose that $G=\Ga=(k,+)$ is the additive group. Then $k[\Ga]$ is isomorphic
to $k[t]$, the polynomial ring in one variable. The identity element is $e=0\in k$.
So the co-unit is $\sigma_0$, which is defined by:
$$
\sigma_0(f(t))=f(0).
$$
The co-multiplication 
$$
\Delta:k[t]\to k[t]\otimes k[t]
$$
is defined by
$$
\Delta(f(t))=f(t\otimes 1+1\otimes t).
$$
and the $\iota:k[t]\to k[t]$ is defined by
$$
\iota(f(t))=f(-t).
$$
We can take $W=k\oplus k\cdot t\subseteq k[t]$.
Then we have $k\subseteq W$, $W$ generates $k[t]$, $\Delta(W)\subseteq W\otimes W$ and $\iota(W)\subseteq W$.
Now $W^d\subseteq k[t]$ consists of all polynomials of degree $\leq d$. This is a natural
filtration on the ring $k[t]$.
\end{example}
\begin{example}\label{ex:Gm}
 Suppose that $G=\Gm=(k^\star,\cdot)$ is the multiplicative group.
Then $k[\Gm]$ is isomorphic to the ring $k[t,t^{-1}]$ of Laurent polynomials.
The identity element is $e=1\in \Gm$. So the co-unit is
$$
\sigma_1:k[t,t^{-1}]\to k
$$
defined by
$$
\sigma_1(f(t))=f(1).
$$
The co-multiplication $\Delta:k[t,t^{-1}]\to k[t,t^{-1}]\otimes k[t,t^{-1}]$ is defined by
$$
\Delta(f(t))=f(t\otimes t)
$$
and $\iota:k[t,t^{-1}]\to k[t,t^{-1}]$ is defined by
$$
\iota(f(t))=f(t^{-1}).
$$
Define $W\subseteq k[t,t^{-1}]$ by $W=kt^{-1}\oplus k\oplus kt$.
Then we have $k\subseteq W$, $W$ generates $k[t,t^{-1}]$, $\Delta(W)\subseteq W\otimes W$ and $\iota(W)\subseteq W$.
The space $W^d$ is the space of all Laurent polynomials of the form
$$
\sum_{i=-d}^d a_it^i
$$
where $a_{-d},a_{1-d},\dots,a_d\in k$.
\end{example}

\begin{lemma}\label{lem:ABC}
Suppose that $A,B,C$ are subspaces of $R_d$
with 
$$\Delta(A)\subseteq B\otimes R_d+R_d\otimes C.$$
Then we have
\begin{equation}\label{eq:ABC}
\Delta((A)_d)\subseteq (B)_d\otimes R_d+R_d\otimes (C)_d
\end{equation}
and
\begin{equation}\label{eq:ABC2}
\Delta(((A))_d)\subseteq ((B))_d\otimes R_d+R_d\otimes ((C))_d.
\end{equation}
\end{lemma}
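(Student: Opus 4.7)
The plan is to prove~\eqref{eq:ABC} by analyzing $\Delta$ on the elementary products $ar$ with $a\in A\cap R_e$ and $r\in R_{d-e}$ that span $(A)_d$, and then to deduce~\eqref{eq:ABC2} by iterating~\eqref{eq:ABC}. The technical heart of the argument is the linear-algebra identity
\[
(B\otimes R_d+R_d\otimes C)\cap (R_e\otimes R_e)\;\subseteq\;(B\cap R_e)\otimes R_e+R_e\otimes (C\cap R_e),\qquad(\star)
\]
which I would prove as follows. The space $B\otimes R_d+R_d\otimes C$ is exactly the kernel of the surjection $R_d\otimes R_d\to (R_d/B)\otimes (R_d/C)$, and its restriction to $R_e\otimes R_e$ factors as the composition
\[
R_e\otimes R_e\to (R_e/(R_e\cap B))\otimes (R_e/(R_e\cap C))\to (R_d/B)\otimes (R_d/C),
\]
whose second arrow is the tensor product of the two injections $R_e/(R_e\cap B)\hookrightarrow R_d/B$ and $R_e/(R_e\cap C)\hookrightarrow R_d/C$, hence itself injective because we are working over a field. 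Consequently the kernel of the restricted surjection equals the kernel of the first arrow, which is exactly $(B\cap R_e)\otimes R_e+R_e\otimes (C\cap R_e)$.

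With $(\star)$ in hand, \eqref{eq:ABC} follows directly. Since $W$ is a stable generating subspace, $\Delta(R_e)\subseteq R_e\otimes R_e$ for every $e$. For $a\in A\cap R_e$, the hypothesis gives $\Delta(a)\in B\otimes R_d+R_d\otimes C$, while $a\in R_e$ forces $\Delta(a)\in R_e\otimes R_e$; by $(\star)$ we may write
\[
\Delta(a)=\sum_l b_l\otimes s_l+\sum_m t_m\otimes c_m
\]
with $b_l\in B\cap R_e$, $c_m\in C\cap R_e$, and $s_l,t_m\in R_e$. For $r\in R_{d-e}$, using that $\Delta$ is a $k$-algebra homomorphism and writing $\Delta(r)=\sum_k u_k\otimes v_k$ with $u_k,v_k\in R_{d-e}$, we compute
\[
\Delta(ar)=\sum_{l,k}(b_l u_k)\otimes(s_l v_k)+\sum_{m,k}(t_m u_k)\otimes(c_m v_k),
\]
in which $b_l u_k\in(B\cap R_e)R_{d-e}\subseteq(B)_d$, $c_m v_k\in(C\cap R_e)R_{d-e}\subseteq(C)_d$, and the remaining tensor factors lie in $R_e R_{d-e}\subseteq R_d$. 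Hence $\Delta(ar)\in(B)_d\otimes R_d+R_d\otimes(C)_d$, and summing over generators of $(A)_d$ proves~\eqref{eq:ABC}.

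For~\eqref{eq:ABC2}, set $A^{(0)}=A$ and $A^{(n+1)}=(A^{(n)})_d$, and define $B^{(n)}$ and $C^{(n)}$ analogously. Applying~\eqref{eq:ABC} inductively yields $\Delta(A^{(n)})\subseteq B^{(n)}\otimes R_d+R_d\otimes C^{(n)}$ for every $n$. Because $R_d$ is finite dimensional, all three ascending chains stabilize, to $((A))_d$, $((B))_d$, and $((C))_d$ respectively, which gives~\eqref{eq:ABC2}. The main obstacle is $(\star)$; everything else is routine bookkeeping with the filtration.
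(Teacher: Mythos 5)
Your proposal is correct and follows essentially the same route as the paper: the identity $(\star)$ is exactly the paper's observation that $R_e/(B\cap R_e)\otimes R_e/(C\cap R_e)$ injects into $R_d/B\otimes R_d/C$, and the rest is the same computation $\Delta\big((A\cap R_e)R_{d-e}\big)\subseteq \Delta(A\cap R_e)\Delta(R_{d-e})$ followed by iteration for \eqref{eq:ABC2}. Your write-up just makes the kernel argument and the element-level bookkeeping more explicit.
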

\begin{proof}
The space
$$
R_e/(B\cap R_e)\otimes R_e/(C\cap R_e)=(R_e\otimes R_e)/((B\cap R_e)\otimes R_e+R_e\otimes (C\cap R_e))$$
is a subspace of
$$
R_d/B\otimes R_d/C=(R_d\otimes R_d)/(B\otimes R_d+R_d\otimes C).
$$
It follows that
\begin{multline*}
\Delta(A\cap R_e)\subseteq \Delta(A)\cap \Delta(R_e)\subseteq
(B\otimes R_d+R_d\otimes C)\cap (R_e\otimes R_e)=\\=
(B\cap R_e)\otimes R_e+R_e\otimes (C\cap R_e).
\end{multline*}
Therefore we have
\begin{multline*}
\Delta((A\cap R_e)R_{d-e})\subseteq
\Delta(A\cap R_e)\Delta(R_{d-e})=\\=
((B\cap R_e)\otimes R_e+R_e\otimes (C\cap R_e))(R_{d-e}\otimes R_{d-e})\subseteq\\
\subseteq
((B\cap R_e)R_{d-e})\otimes R_d+R_d\otimes ((C\cap R_e)R_{d-e}).
\end{multline*}
This shows (\ref{eq:ABC}). Now (\ref{eq:ABC2}) follows by iteration.
\end{proof}

\subsection{The complexity of a representation}
Let $G$ be a linear algebraic group over $k$ and fix a stable generating subspace $W$.
\begin{definition}
A rational representation of $G$
is a finite dimensional vector space $V$ with a $k$-linear map
$$
\mu: V\to V\otimes R
$$
such that the diagram
$$
\xymatrix{
V\ar[rr]^\mu\ar[d]_\mu && V\otimes R \ar[d]^{\mu\otimes \id}\\
V\otimes R\ar[rr]_{\id\otimes \Delta} & &V\otimes R\otimes R}
$$
commutes, and $(\id\otimes \sigma_e)\circ\mu=\id$.
\end{definition}
If $k$ is algebraically closed, then we define
$$
g\cdot w=(\id\otimes \sigma_g)\circ\mu(w)
$$
for all $g\in G$ and $w\in V$.
We have the following commutative diagram
$$
\xymatrix{
V\ar[rr]^\mu\ar[d]_\mu && V\otimes R\ar[rr]^{\id\otimes \sigma_h}\ar[d]_{\mu\otimes\id} && V\ar[d]^{\mu}\\
V\otimes R\ar[rr]^{\id\otimes \Delta}\ar[rrd]_{\id\otimes\sigma_{gh}} && V\otimes R\otimes R\ar[rr]^{\id\otimes\id\otimes \sigma_h}\ar[d]^{\id\otimes\sigma_g\otimes\sigma_h} & & V\otimes R\ar[lld]^{\id\otimes\sigma_g} \\
& &  V &&}.
$$
This shows that 
$$
(gh)\cdot v=(\id\otimes \sigma_{gh})\circ \mu(v)=(\id\otimes \sigma_g)\circ\mu\circ (\id\otimes\sigma_h)\circ \mu(v)=
g\cdot (h\cdot v).
$$
\begin{definition}\label{def2.10}
Suppose that $V$ and $V'$ are rational representations of $G$ given by $\mu:V\to V\otimes R$ and $\mu':V'\to V'\otimes R$.
A linear map $f:V\to V'$ is called $G$-equivariant if the following diagram commutes:
$$
\xymatrix{
V\ar[rr]^\mu\ar[d]_{f} & & V\otimes R\ar[d]^{f\otimes \id}\\
V'\ar[rr]_{\mu'} & & V'\otimes R}.
$$
\end{definition}
If $k$ is algebraically closed, $v\in V$ and $g\in G$ then we have
$$
f(g\cdot v)=f\circ (\id\otimes \sigma_g)\circ \mu(v)=(\id\otimes \sigma_g)\circ (f\otimes \id)\circ \mu=(\id\otimes \sigma_g)\circ \mu\circ f(v)=
g\cdot f(v).
$$

Assume that $\ell_W(V)$ is the smallest nonnegative integer such that 
$$\mu(V)\subseteq R_{\ell_W(V)}\otimes V.$$
The number $\ell_W(V)$ depends on the choice of $W$, but we will often
drop the subscript and just write $\ell(V)$ if $W$ is fixed.
We can think of $\ell(V)$ as a measure of the complexity of
the representation $V$.
\begin{lemma}\
\begin{enumerate}
\item $\ell(V\oplus V')=\max\{\ell(V),\ell(V')\}$;
\item $\ell(V\otimes V')\leq \ell(V)+\ell(V')$;
\item $\ell(V)=\ell(V^\star)$.
\end{enumerate}
\end{lemma}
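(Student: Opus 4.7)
The plan is to compute the comodule structure map of each construction and read off the bound on its image. Two properties of the filtration $R_d = W^d$ do all the work: the multiplicativity $R_a \cdot R_b \subseteq R_{a+b}$, which is immediate from $R_d = W^d$, and the $\iota$-stability $\iota(R_d) \subseteq R_d$, which follows from $\iota(W) \subseteq W$ together with $\iota$ being an algebra anti-homomorphism.

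For (1), the co-action on $V \oplus V'$ is just $\mu \oplus \mu'$, whose image lies in $(V \oplus V') \otimes R_{\max\{\ell(V),\ell(V')\}}$, so $\ell(V \oplus V') \leq \max\{\ell(V), \ell(V')\}$. The reverse inequality follows because the inclusions $V, V' \hookrightarrow V \oplus V'$ are equivariant in the sense of Definition~\ref{def2.10}, hence $\mu(V) \subseteq V \otimes R_{\ell(V \oplus V')}$ and similarly for $V'$, giving both $\ell(V), \ell(V') \leq \ell(V \oplus V')$.

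For (2), the co-action on the tensor product $V \otimes V'$ is $(\id_{V \otimes V'} \otimes m) \circ (\id_V \otimes \tau \otimes \id_R) \circ (\mu \otimes \mu')$, where $m: R \otimes R \to R$ is the multiplication and $\tau$ swaps the middle two factors. Explicitly, if $\mu(v) = \sum_i v_i \otimes r_i$ with $r_i \in R_{\ell(V)}$ and $\mu'(v') = \sum_j v'_j \otimes r'_j$ with $r'_j \in R_{\ell(V')}$, then $v \otimes v'$ is sent to $\sum_{i,j} (v_i \otimes v'_j) \otimes r_i r'_j$, and $r_i r'_j \in R_{\ell(V)} \cdot R_{\ell(V')} \subseteq R_{\ell(V) + \ell(V')}$ by multiplicativity.

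For (3), fix a basis $e_1, \ldots, e_n$ of $V$ with dual basis $e_1^\star, \ldots, e_n^\star$ and write $\mu(e_j) = \sum_i e_i \otimes r_{ij}$ with $r_{ij} \in R_{\ell(V)}$. A direct calculation (equivalently, the definition $(g \cdot f)(v) = f(g^{-1} \cdot v)$) shows that the co-action on $V^\star$ is $\mu^\star(e_j^\star) = \sum_k e_k^\star \otimes \iota(r_{jk})$, and $\iota(r_{jk}) \in \iota(R_{\ell(V)}) \subseteq R_{\ell(V)}$, so $\ell(V^\star) \leq \ell(V)$. Applying this inequality to $V^\star$ in place of $V$ and using the canonical isomorphism $V^{\star\star} \cong V$ of representations gives the reverse bound. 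Nothing here is technically subtle; the only care required is in the dualization step, where the appearance of $\iota$ makes the $\iota$-stability of the filtration the essential ingredient.
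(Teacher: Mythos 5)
Your proposal is correct and follows essentially the same route as the paper: the tensor-product bound comes from $R_{\ell(V)}R_{\ell(V')}\subseteq R_{\ell(V)+\ell(V')}$ applied to the composed co-action, and the duality statement comes from the antipode formula relating $\mu^\star$ to $\mu$, the stability $\iota(R_d)\subseteq R_d$, and the double-dual trick $\ell(V)=\ell(V^{\star\star})\leq\ell(V^\star)$. Your only additions are harmless extra detail: an explicit basis computation in (3) where the paper argues coordinate-free, and the reverse inequality in (1), which the paper dismisses as straightforward.
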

\begin{proof}\ 

\noindent{\bf (1)} This is straightforward.

\noindent {\bf (2)} The representation $V$ and $V'$ are given by $\mu:V\to V\otimes R$ and $\mu':V'\to V'\otimes R$.
We have $\mu(V)\subseteq V\otimes R_{\ell(V)}$ and $\mu'(V')\subseteq V'\otimes R_{\ell(V')}$.
The representation $V\otimes V'$ is given by the composition $\mu''$ defined by:
$$
\xymatrix{
V\otimes V'\ar[rr]^{\mu\otimes \mu'}&&  V\otimes R\otimes V'\otimes R\ar[r]^{\cong} & V\otimes V'\otimes R\otimes R\ar[rr]^{\id\otimes\id\otimes m} && V\otimes V'\otimes R.
}
$$ 
where $m:R\otimes R\to R$ is the usual multiplication given by $\sum_{i}a_i\otimes b_i\mapsto \sum_i a_ib_i$.
We have 
$$\mu\otimes \mu'(V\otimes V')\subseteq V\otimes R_{\ell(V)}\otimes V'\otimes R_{\ell(V')},$$
so
$$
\mu''(V\otimes V')\subseteq (\id\otimes\id\otimes m)( V\otimes V'\otimes R_{\ell(V)}\otimes R_{\ell(V')})\subseteq
V\otimes V'\otimes R_{\ell(V)}R_{\ell(V')}\subseteq V\otimes V'\otimes R_{\ell(V)+\ell(V')}.
$$

\noindent 
{\bf (3)} Let $\mu^\star:V^\star \to V^\star\otimes R $ be the dual representation of $\mu:V\to V\otimes R$.
We have 
$$
(f\otimes \iota)\circ \mu(v)=(v\otimes \id)\circ \mu^\star(f)
$$
where $v\in V=V^{\star\star}$ and $f\in V^\star$.
If $\ell(V)=d$, then $\mu(v)\in V\otimes R_d$, and 
$$(v\otimes\id)\circ \mu^\star(f)=(f\otimes \iota)(\mu(v))\in( f\otimes \iota)(V\otimes R_d)\subseteq\iota(R_d)\subseteq R_d.
$$
It follows that $\mu^\star(f)\subseteq V^\star \otimes R_d$.
This shows that $\ell(V^\star)\leq \ell(V)\leq d$. Similarly, we have $\ell(V)=\ell(V^{\star\star})\leq \ell(V^{\star})$.
\end{proof}
\begin{example}\label{ex:Ga2}
Suppose we are in the context of Example~\ref{ex:Ga}.
Assume that $k$ is a field of characteristic $0$.
Let $V_d$ be the $(d+1)$-dimensional indecomposable representation of $\Ga$ ($d\geq 0$). We can choose a basis $x_0,x_1,\dots,x_d$
 of $V_d$ such that the action  $\mu_d:V_d\to V_d\otimes R$ is given by:
$$
\mu_d(x_i)=x_i\otimes 1+x_{i-1}\otimes t+x_{i-2}\otimes \frac{t^2}{2!}+\cdots+x_0\otimes \frac{t^i}{i!}.
$$
It follows that $\mu(V_d)\subseteq V_d\otimes R_d$ and $\mu_d(V_d)\not\subseteq V_d\otimes R_{d-1}$.
We conclude that $\ell(V_d)=d$.
If $V$ is any representation, then $V$ is of the form
$$
V=V_{d_1}\oplus \cdots \oplus V_{d_r}
$$
and 
$$
\ell(V)=\max\{d_1,\dots,d_r\}.
$$
\end{example}
\begin{example}\label{ex:Gm2}.
Suppose that we are in the setup of Example~\ref{ex:Gm}.
For $d\in \Z$, let $V_d\cong k$ be the irreducible $1$-dimensional representation of $\Gm$ defined by $\mu_d:V_d\to V_d\otimes R$,
where $\mu_d$ is given by
$$
\mu_d(1)=1\otimes t^d.
$$
Then we clearly have $\ell(V_d)=|d|$.
Since $V_d,d\in\Z$ are all irreducible representations, any representation $V$ can be written as
$$
V=V_{d_1}\oplus \cdots \oplus V_{d_r}.
$$
Then we have 
$$
\ell(V)=\max\{|d_1|,|d_2|,\dots,|d_r|\}.
$$
\end{example}

\subsection{Definition of the approximate categories}
For a subspace $Z\subseteq V$ we define $Z^\perp=\{f\in V^\star\mid \forall v\in V\,f(v)=0\}$.

Suppose that $X_1,X_2\in \Aff(V)$. If
$k$ is algebraically closed, then the equation
 $g\cdot X_1\subseteq X_2$ ($g\in G$) yields a
 system of polynomial equations in $k[G]$.
 If $X_1$ and $X_2$ are nonempty then we can write
 $X_1=v_1+Z_1$ and $X_2=v_2+Z_2$. Let $V^\star$ be the dual
 of $V$ and $Z_2^\perp$ be the space of all $f\in V^\star$ 
 which vanish on $Z_2$.
 For every function $f\in Z_2^\perp\subseteq V^\star$
 on $V$ vanishing on $Z_2$, and every $w\in X_1$
  we have the equation $f(g\cdot w)=f(v_2)$.
  In other words, 
  $$(f\otimes \id)\circ \mu(w)-f(v_2)\otimes 1=0.$$
  The latter equation makes sense, even if $k$ is not algebraically
  closed.
  \begin{definition}
  Let $S(X_1,X_2)$ be the span of all
  $$(f\otimes \id)\circ \mu(w)- f(v_2)\otimes 1=(f\otimes \id)(\mu(w)-f(v_2)\otimes 1)$$
  with $f\in Z_2^\perp$ and $w\in X_1$.
  We define $S(\emptyset,X)=\{0\}$ for $X\in \Aff(V)$
  and $S(X,\emptyset)=\{1\}$ if $X\in \Aff(V)\setminus\{\emptyset\}$.
  \end{definition}

\begin{lemma}\label{lem:Scoideal}
If $X_1,X_2,X_3\in \Aff(V)$, then we have
$$
\Delta(S(X_1,X_3))\subseteq S(X_2,X_3)\otimes R_{\ell(V)}+R_{\ell(V)}\otimes S(X_1,X_2).
$$
\end{lemma}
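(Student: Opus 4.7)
The plan is to reduce, by linearity of $\Delta$ and bilinearity of the right-hand side, to checking the inclusion on a single generator $s=(f\otimes\id)\mu(w)-f(v_3)\cdot 1$ of $S(X_1,X_3)$, with $f\in Z_3^\perp$ and $w\in X_1$. The edge cases in which some $X_i$ is empty follow at once from the conventions $S(\emptyset,X)=\{0\}$ and $S(X,\emptyset)=k\cdot 1$ combined with $\Delta(0)=0$ and $\Delta(1)=1\otimes 1$, so I would focus on the main case where all three affine subspaces are nonempty and write $X_i=v_i+Z_i$.

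Fix a basis $e_1,\dots,e_n$ of $V$ whose first $k$ vectors span $Z_2$, write $\mu(v)=\sum_i e_i\otimes\rho_v^i$ and $v_2=\sum_j\lambda_j e_j$, and set $\phi(v):=(f\otimes\id)\mu(v)\in R_{\ell(V)}$. The module-coalgebra axiom $(\mu\otimes\id)\mu=(\id\otimes\Delta)\mu$, upon equating coefficients of the $e_i$, yields the relation $\Delta(\rho_w^i)=\sum_j \rho_{e_j}^i\otimes\rho_w^j$, and therefore
$$
\Delta(\phi(w))=\sum_j \phi(e_j)\otimes\rho_w^j.
$$

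The central algebraic move, which at the level of points records the trivial identity $f((gh)\cdot w)-f(v_3)=f(g\cdot(h\cdot w-v_2))+(f(g\cdot v_2)-f(v_3))$, is the rearrangement
$$
\Delta(s)=\sum_j\phi(e_j)\otimes(\rho_w^j-\lambda_j\cdot 1)+(\phi(v_2)-f(v_3)\cdot 1)\otimes 1,
$$
which is a formal consequence of $\phi(v_2)=\sum_j\lambda_j\phi(e_j)$. The second summand lies in $S(X_2,X_3)\otimes R_{\ell(V)}$, because $v_2\in X_2$ and $f\in Z_3^\perp$ make $\phi(v_2)-f(v_3)\cdot 1$ a generator of $S(X_2,X_3)$.

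The first summand I would split according to whether $j\leq k$ or $j>k$. For $j>k$ the functional $e_j^*$ lies in $Z_2^\perp$ and $\lambda_j=e_j^*(v_2)$, so $\rho_w^j-\lambda_j\cdot 1=(e_j^*\otimes\id)\mu(w)-e_j^*(v_2)\cdot 1$ is a generator of $S(X_1,X_2)$, placing the term in $R_{\ell(V)}\otimes S(X_1,X_2)$. For $j\leq k$ the vector $v_2+e_j$ belongs to $X_2$, so $(f\otimes\id)\mu(v_2+e_j)-f(v_3)\cdot 1=\phi(v_2)+\phi(e_j)-f(v_3)\cdot 1$ is a generator of $S(X_2,X_3)$; subtracting the already identified generator $\phi(v_2)-f(v_3)\cdot 1$ of $S(X_2,X_3)$ gives $\phi(e_j)\in S(X_2,X_3)$, and the term then lies in $S(X_2,X_3)\otimes R_{\ell(V)}$. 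The main obstacle is spotting the right decomposition together with the basis of $V$ adapted to $Z_2$; after that the verification is bookkeeping, and the degree bound $R_{\ell(V)}$ appears automatically from $\mu(V)\subseteq V\otimes R_{\ell(V)}$.
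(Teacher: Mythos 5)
Your proof is correct and is essentially the paper's own argument written out in a basis adapted to $Z_2$: the paper inserts the midpoint $v_2$ via coassociativity, $(\id\otimes \Delta)(\mu(w)-v_3\otimes 1)=(\mu\otimes\id)(\mu(w)-v_2\otimes 1)+(\mu(v_2)-v_3\otimes 1)\otimes 1$, then uses $\mu(v_2)-v_3\otimes 1,\ \mu(Z_2)\subseteq V\otimes S(X_2,X_3)+Z_3\otimes R_{\ell(V)}$ and applies $f\otimes\id\otimes\id$ with $f\in Z_3^\perp$, which is precisely your second summand together with your $j\le k$ versus $j>k$ split. One caveat: like the paper, the real content is the case where all three spaces are nonempty, but your one-line dismissal of the edge cases is too quick for $X_2=\emptyset$ with $X_1,X_3\neq\emptyset$ (there the right-hand side is only $R_{\ell(V)}\otimes k\cdot 1$, which need not contain $\Delta(S(X_1,X_3))$); this subcase is harmless downstream, since then $I_d(X_1,X_2)=R_d$ and Corollary~\ref{cor:Icoideal} is trivial, but it should be flagged rather than asserted to follow at once.
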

\begin{proof}
Suppose that $X_i=v_i+Z_i$ for $i=1,2,3$.
We have
$$
\mu(v_2)-v_3\otimes 1\in V\otimes S(X_2,X_3)+Z_3\otimes R_{\ell(V)},
$$
and
$$
\mu(Z_2)\subseteq V\otimes S(X_2,X_3)+Z_3\otimes R_{\ell(V)}.
$$
It follows that
\begin{multline*}
(\id\otimes \Delta)(\mu(v_1)-v_3\otimes 1)=
(\id\otimes \Delta)\circ \mu(v_1)-v_3\otimes 1\otimes 1=\\
=
(\mu\otimes \id)\circ\mu(v_1)-v_3\otimes 1\otimes 1=
(\mu\otimes \id)\circ (\mu(v_1)-v_2\otimes 1)+ (\mu(v_2)-v_3\otimes 1)\otimes 1\in\\
\in
(\mu\otimes \id)(V\otimes S(X_1,X_2)+Z_2\otimes R_{\ell(V)})+
V\otimes S(X_2,X_3)\otimes R_{\ell(V)}+Z_3\otimes R_{\ell(V)}\otimes R_{\ell(V)}\subseteq\\
\subseteq
V\otimes R_{\ell(V)}\otimes S(X_1,X_2)+V\otimes S(X_2,X_3)\otimes R_{\ell(V)}+Z_3\otimes R_{\ell(V)}\otimes R_{\ell(V)}.
\end{multline*}
If $f\in Z_3^\perp$, then we have
\begin{multline*}
\Delta\big(
(f\otimes \id)(\mu(v_1)-v_3\otimes 1)\big)=(f\otimes\id\otimes \id)\big(\id\otimes \Delta(\mu(v_1)-v_3\otimes 1)\big)\subseteq\\
\subseteq
(f\otimes\id\otimes \id)(V\otimes R_{\ell(V)}\otimes S(X_1,X_2)+V\otimes S(X_2,X_3)\otimes R_{\ell(V)}+Z_3\otimes R_{\ell(V)}\otimes R_{\ell(V)})\subseteq\\
\subseteq S(X_2,X_3)\otimes R_{\ell(V)}+R_{\ell(V)}\otimes S(X_1,X_2).
\end{multline*}
\end{proof}
For $X_1,X_2\in \Aff(V)$, define
$$
I_d(X_1,X_2)=((S(X_1,X_2)))_d
$$
for all $d\geq \ell(V)$. We also define
$$
I_{\infty}(X_1,X_2)=(S(X_1,X_2))\subseteq R.
$$
For $d\geq \ell(V)$ we have
$$
I_{d}(X_1,X_2)\subseteq I_{d+1}(X_1,X_2)\subseteq \cdots, 
$$
where $I_{\infty}(X_1,X_2)=\bigcup_{j\geq d} I_{j}(X_1,X_2)$.
For $d\geq \ell(V)$ we have a natural linear map
$\psi_d:R_d/I_d(X_1,X_2)\to R_{d+1}/I_{d+1}(X_1,X_2)$.
This gives us a  chain of linear maps
\begin{equation}\label{eq:directlimit}
R_d/I_d(X_1,X_2)\to R_{d+1}/I_{d+1}(X_2,X_3)\to R_{d+2}/I_{d+2}(X_1,X_2)\to\cdots
\end{equation}
There also is a natural linear map $\gamma_d:R_d/I_d(X_1,X_2)\to R/I_\infty(X_1,X_2)$ for all $d$. We have $\gamma_{d+1}\circ \psi_d=\gamma_d$
for all $d$.
By Proposition~\ref{prop:3}, there exists a constant $C=C(\ell(V))$ such that 
$\gamma_d$ is injective for large $d\geq C$.
This implies that $\psi_d$ is injective for large $d$.
This shows that $R/I_{\infty}(X_1,X_2)$ is the direct
limit the diagram (\ref{eq:directlimit}):

\begin{corollary}\label{cor:Icoideal}
For $X_1,X_2,X_3\in \Aff(V)$ we have
$$
\Delta(I_d(X_1,X_3))\subseteq I_d(X_2,X_3)\otimes R_d+R_d\otimes I_d(X_1,X_2).
$$
\end{corollary}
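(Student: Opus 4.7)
The plan is to derive this as a direct consequence of Lemma~\ref{lem:Scoideal} together with Lemma~\ref{lem:ABC}, which was already formulated precisely for this kind of propagation from a set to the truncated ideal it generates. The target inequality is simply the ``truncated ideal version'' of Lemma~\ref{lem:Scoideal}, and Lemma~\ref{lem:ABC} is the machine that performs exactly this upgrade.

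First I would set $A=S(X_1,X_3)$, $B=S(X_2,X_3)$, $C=S(X_1,X_2)$. Assuming $d\ge \ell(V)$, so that $R_{\ell(V)}\subseteq R_d$, Lemma~\ref{lem:Scoideal} gives
$$\Delta(A)\subseteq B\otimes R_{\ell(V)}+R_{\ell(V)}\otimes C\subseteq B\otimes R_d+R_d\otimes C,$$
which is precisely the hypothesis of Lemma~\ref{lem:ABC}. Applying the second conclusion (\ref{eq:ABC2}) of that lemma yields
$$\Delta(((A))_d)\subseteq ((B))_d\otimes R_d+R_d\otimes ((C))_d,$$
and unwinding the definition $I_d(X_i,X_j)=((S(X_i,X_j)))_d$ gives the desired inclusion.

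For the case $d<\ell(V)$, the $I_d$ are defined only for $d\ge \ell(V)$ in the text, so there is nothing to verify. The only remaining issue is the degenerate cases where some $X_i=\emptyset$, where the convention $S(\emptyset,X)=\{0\}$ and $S(X,\emptyset)=\{1\}$ applies. These can be checked by hand: if $X_3=\emptyset$ and any $X_i\neq \emptyset$ then $1\in I_d(X_1,X_3)$ and $1\in I_d(X_2,X_3)$, so $\Delta(1)=1\otimes 1\in I_d(X_2,X_3)\otimes R_d$; if $X_1=\emptyset$ the left-hand side is zero; the remaining combinations are equally immediate.

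The whole proof is essentially just invoking the prior two lemmas in sequence, so no real obstacle arises; the only place where one has to be slightly careful is verifying that the hypothesis of Lemma~\ref{lem:ABC} still holds after one inflates $R_{\ell(V)}$ to $R_d$, which is automatic once $d\ge \ell(V)$.
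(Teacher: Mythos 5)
Your proposal is correct and is exactly the paper's argument: the paper proves the corollary by citing Lemma~\ref{lem:ABC} together with Lemma~\ref{lem:Scoideal}, which is precisely your combination of the two (your additional check of the degenerate empty-set cases and of the inflation $R_{\ell(V)}\subseteq R_d$ is just spelling out details the paper leaves implicit).
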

\begin{proof}
This follows from Lemma~\ref{lem:ABC} and Lemma~\ref{lem:Scoideal}.
\end{proof}
For $X_1,X_2\in \Aff(V)$ and $d\geq \ell(V)$, define
$$
\Hom_d(X_1,X_2)=(R_d/I_d(X_1,X_2))^\star.
$$
We also define
$$
\Hom_\infty(X_1,X_2)=(R/I(X_1,X_2))^\star
$$

It follows from the definitions and Hilbert's Nullstellensatz that
$$
\Hom_{\infty}(X_1,X_2)\neq 0\Leftrightarrow I_{\infty}(X_1,X_2)\neq R\Leftrightarrow\exists g\in G(\overline{k})\, g\cdot X_1\subseteq X_2.
$$

Now $\Hom_{\infty}(X_1,X_2)$ is the inverse limit
of the diagram
$$
\cdots \to \Hom_{d+2}(X_1,X_2)\to \Hom_{d+1}(X_1,X_2)\to \Hom_d(X_1,X_2).
$$
The map $\gamma_{d}^\star:\Hom_{\infty}(X_1,X_2)\to \Hom_d(X_1,X_2)$
is onto for $d\geq C$ large enough, where $C=C(\ell(V))$ is a constant depending on $\ell(V)$.
\begin{corollary}
There exists a constant $C$ such that for all $X_1,X_2\in \Aff(V)$ and $d\geq C$, we have
$$
\Hom_d(X_1,X_2)\neq 0\Leftrightarrow \exists g\in G(\overline{k})\,g\cdot X_1\subseteq X_2.
$$
and
$$
\begin{array}{c}
\mbox{$X_1,X_2$ isomorphic in ${\mathcal C}_d(V)$}\\
\Updownarrow\\
\mbox{$\Hom_d(X_1,X_2)\neq 0$ and $\Hom_d(X_2,X_1)\neq 0$}\\
\Updownarrow\\
\mbox{$X_1,X_2$ are in
the same $G(\overline{k})$-orbit}\end{array}.
$$

\end{corollary}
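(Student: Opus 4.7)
The plan is to use Proposition~\ref{prop:3} to reduce the finite-dimensional problem at level $d$ to the infinite one, then invoke the Hilbert Nullstellensatz already recorded just before the corollary, and finally lift elements of $G(\overline{k})$ to morphisms via the evaluation functionals $\sigma_g$.

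First, apply Proposition~\ref{prop:3} to $S=S(X_1,X_2)\subseteq R_{\ell(V)}$. This produces a constant $C=C(\ell(V))$, independent of $X_1$ and $X_2$, such that for every $d\geq C$
$$
I_d(X_1,X_2)=((S))_d=(S)\cap R_d=I_\infty(X_1,X_2)\cap R_d.
$$
Consequently $I_d(X_1,X_2)=R_d$ iff $1\in I_\infty(X_1,X_2)$ iff $I_\infty(X_1,X_2)=R$. Dualizing, $\Hom_d(X_1,X_2)\neq 0\Leftrightarrow\Hom_\infty(X_1,X_2)\neq 0$ for $d\geq C$, and combining with the Nullstellensatz equivalence $\Hom_\infty(X_1,X_2)\neq 0\Leftrightarrow \exists g\in G(\overline{k}),\ g\cdot X_1\subseteq X_2$ yields the first stated equivalence of the corollary. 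The edge cases where $X_1$ or $X_2$ is empty are handled by the conventions on $S(\emptyset,\cdot)$ and $S(\cdot,\emptyset)$.

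For the second chain, $X_1\cong_d X_2$ in ${\mathcal C}_d(V)$ trivially gives both Hom spaces nonzero. Conversely, assume both $\Hom_d(X_1,X_2)$ and $\Hom_d(X_2,X_1)$ are nonzero; by the first equivalence there exist $g_1,g_2\in G(\overline{k})$ with $g_1\cdot X_1\subseteq X_2$ and $g_2\cdot X_2\subseteq X_1$. Since $g_1,g_2$ act as invertible linear automorphisms of $V\otimes\overline{k}$, they preserve the dimensions of affine subspaces, so $g_2g_1\cdot X_1\subseteq X_1$ has the same dimension as $X_1$, forcing $g_2g_1\cdot X_1=X_1$ and therefore $g_1\cdot X_1=X_2$. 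Hence $X_1$ and $X_2$ lie in the same $G(\overline{k})$-orbit. For the remaining implication, given $g$ with $g\cdot X_1=X_2$ the evaluation functionals $\sigma_g$ and $\sigma_{g^{-1}}$ define elements of $\Hom_\infty(X_1,X_2)$ and $\Hom_\infty(X_2,X_1)$ respectively, and because $\Delta$ is dual to multiplication on $G$ their convolution products both equal $\sigma_e$. Pushing these down through the surjection $\gamma_d^\star$ produces mutually inverse morphisms in ${\mathcal C}_d(V)$, so $X_1\cong_d X_2$.

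The main obstacle is this final descent step: one must check that the composition in ${\mathcal C}_d(V)$ really is compatible with the convolution on $\Hom_\infty$ under $\gamma_d^\star$, so that identity morphisms in $\Hom_\infty(X_i,X_i)$ descend to identities in $\Hom_d(X_i,X_i)$. This compatibility is precisely the co-ideal property of $I_d$ established in Corollary~\ref{cor:Icoideal}, which guarantees that $\Delta$ induces the expected map $R_d/I_d(X_1,X_3)\to R_d/I_d(X_2,X_3)\otimes R_d/I_d(X_1,X_2)$. With that in place, the pair $\sigma_g,\sigma_{g^{-1}}$ yields an isomorphism in ${\mathcal C}_d(V)$ and the equivalences close.
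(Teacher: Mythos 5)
Your proof of the first equivalence is correct and is essentially the paper's own argument: Proposition~\ref{prop:3} applied to $S(X_1,X_2)\subseteq R_{\ell(V)}$ gives a uniform constant $C=C(\ell(V))$ with $I_d(X_1,X_2)=I_\infty(X_1,X_2)\cap R_d$ for $d\geq C$, so $\Hom_d\neq 0\Leftrightarrow \Hom_\infty\neq 0$, and the Nullstellensatz statement recorded just before the corollary finishes it. The dimension argument showing that $\Hom_d(X_1,X_2)\neq 0$ and $\Hom_d(X_2,X_1)\neq 0$ force $g_1\cdot X_1=X_2$ is also fine.

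The gap is in the last implication, ``same $G(\overline{k})$-orbit $\Rightarrow$ $X_1\cong_d X_2$.'' You take $g\in G(\overline{k})$ and assert that $\sigma_g$ and $\sigma_{g^{-1}}$ ``define elements of $\Hom_\infty(X_1,X_2)$ and $\Hom_\infty(X_2,X_1)$.'' They do not, unless $g$ is a $k$-rational point: $\sigma_g$ is a $\overline{k}$-valued functional, while $\Hom_\infty(X_1,X_2)=(R/I_\infty(X_1,X_2))^\star$ and $\Hom_d(X_1,X_2)=(R_d/I_d(X_1,X_2))^\star$ are duals over $k$. Your convolution argument therefore only produces mutually inverse morphisms in ${\mathcal C}_d(V)\otimes_k\overline{k}$, i.e.\ it shows $X_1$ and $X_2$ are isomorphic after base change to $\overline{k}$, not in ${\mathcal C}_d(V)$ itself. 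This matters precisely in the intended applications, where $k=\F_p$ is far from algebraically closed. Closing this gap is exactly what the paper's Proposition~\ref{prop:kbar} is for (used in the proof of Theorem~\ref{theo:general}): if two objects become isomorphic in ${\mathcal C}\otimes_k\overline{k}$, they are already isomorphic in ${\mathcal C}$; its proof is a genuine descent argument (via Proposition~\ref{prop:iso}, a Noether--Deuring type statement for modules, exploiting finite-dimensionality of the Hom-spaces), not a formal push-down along $\gamma_d^\star$. So your outline needs this additional ingredient; the compatibility of composition with $\gamma_d^\star$ via Corollary~\ref{cor:Icoideal}, which you flag as the main obstacle, is the unproblematic part.
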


We can view $\Hom_d(X_1,X_2)$ as a subspace of $R_d^\star$.
From Corollary~\ref{cor:Icoideal} follows that $\Delta$ induces
a linear map
$$
\overline{\Delta}:R_d/I_d(X_1,X_3)\to R_d/I_d(X_2,X_3)\otimes R_d/I_d(X_1,X_2).
$$
Dualizing gives a linear map
$$
\Hom_d(X_1,X_2)\otimes \Hom_d(X_2,X_3)\to \Hom_d(X_1,X_3)
$$
which corresponds to a bilinear multiplication
$$
\Hom_d(X_1,X_2)\times \Hom_d(X_2,X_3)\to \Hom_d(X_1,X_3).
$$
This multiplication is associative, because $R_d^\star$ is associative.
\begin{definition}
For $d\geq \ell(V)$, the category $\category_d(V)$ is the category where the objects
are elements of $\Aff(V)$ and for $X_1,X_2\in V$, $\Hom_d(X_1,X_2)$ is the
set of morphisms from $X_1$ to $X_2$.
\end{definition}
\begin{example}
Consider the group $G=\Gm$ as in Example~\ref{ex:Gm} and \ref{ex:Gm2}.
Let $V=V_{3}\oplus V_5$. We have $\ell(V)=5$.
Let $v_1=(1,1),v_2=(2,1)\in V$. We will compute $\Hom_5(v_1,v_2)$.
The equation $t\cdot v_1=v_2$ gives us the equations
$$
t^3-2,t^5-1
$$
So $S(v_1,v_2)$ is spanned by these two polynomials. 
We have
$$
2t^2-1=(t^5-1)-t^2\cdot (t^3-2)\in (S(v_1,v_2))_5,
$$
$$
t-4=2(t^3-2)-t(2t^2-1)\in ((S(v_1,v_2))_5)_5\subseteq I_5(v_1,v_2)
$$
$$
31=(2t^2-1)-2(t+4)(t-4)\in I(v_1,v_2)_5
$$
Let us assume that $31$ is invertible in $k$. Then we have $1\in I(v_1,v_2)_5$,
so $I(v_1,v_2)_5=R_5$ and $\Hom_5(v_1,v_2)=0$.

Suppose that $X_1=\{v_1\}$ and $X_2=\{(x,x)\mid x\in k\}\subseteq V$.
We will compute $\Hom_5(X_1,X_2)$. The subspace $X_2$ is defined by $x_2-x_1=0$,
and $t\cdot v_1=(t^3,t^5)$, so $S(X_1,X_2)$ is spanned by the polynomial $t^5-t^3$.
We have $t^2-1=t^{-3}(t^5-t^3)\in (S(X_1,X_2))_5$. We have that
$$
S(X_1,X_2))_5=I(X_1,X_2)_5
$$
is the space spanned by
$$
t^3(t^2-1),t^2(t^2-1),\dots,t^{-5}(t^2-1).
$$
The space $R_5/I(X_1,X_2)_5$ is $2$ dimensional and spanned by $1+I_5(X_1,X_2),t+I_5(X_1,X_2)$.
So $\Hom_5(X_1,X_2)$ is $2$-dimensional as well.
\end{example}
\begin{remark}
Let $G=\Ga$ as in Example~\ref{ex:Ga} and \ref{ex:Ga2}.
Suppose that $V$ is a representation with $\ell(V)\leq d$, and $X_1,X_2\in \Aff(V)$.
Then $S(X_1,X_2)$ is spanned by polynomials of degree $\leq d$.
From the Euclidean algorithm in $k[\Ga]\cong k[t]$ follows that $(S(X_1,X_2))\cap R_d=I_d(X_1,X_2)$.
So we have 
$$\Hom_d(X_1,X_2)\neq 0\Leftrightarrow \exists t\in \overline{k}\  t\cdot X_1\subseteq X_2.$$
In particular, $X_1$ and $X_2$ are in the same $G(\overline{k})$ orbit if and only if $X_1\cong_d X_2$.

The same result holds for $G=\Gm$, because $k[\Gm]=k[t,t^{-1}]$ is also an Euclidean domain.
For other groups $G$, it is possible that $X_1\cong_d X_2$ for $X_1,X_2\in \Aff(V)$ with $\ell(V)=d$,
but $X_1,X_2$ are not in the same $G(\overline{k})$-orbit. 
But we know that if $X_1,X_2$ are not in the same $G(\overline{k})$-orbit, then $X_1\not\cong_e X_2$
for some $e\gg 0$.
\end{remark}

\section{Properties of the approximate categories}
\subsection{Some elementary properties}
If $Z$ is a representation with $\ell(Z)\leq d$, then the homomorphism $\mu:Z\to Z\otimes R$ restricts to
$$\mu:Z\to Z\otimes R_d.$$
If $f\in R_d^\star$, then $(\id\otimes f)\circ \mu$ is an endomorphism of $Z$.
One can verify that $Z$ has the structure of an $R_d^\star$-module, where
the multiplication is defined by
$$
f\cdot w:=(\id\otimes f)\circ \mu(w).
$$
For any $X_1,X_2\in \Aff(V)$, $\Hom_d(X_1,X_2)$ is a subspace
of $R_d^\star$, so $\Hom_d(X_1,X_2)$ acts on any representation $Z$ with $\ell(Z)\leq d$.
\begin{lemma}
Suppose that $X_1=v_1+Z_1$ and $X_2=v_2+Z_2$.
If  $f\in \Hom_d(X_1,X_2)$
then $f\cdot X_1\subseteq f(1)v_2+Z_2$.
\end{lemma}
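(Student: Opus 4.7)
The plan is to unwind the definitions and exhibit the required containment as a direct annihilation property of $f$. Recall that
$$
\Hom_d(X_1,X_2) = (R_d/I_d(X_1,X_2))^\star,
$$
so $f \in \Hom_d(X_1,X_2)$ is a linear functional on $R_d$ that vanishes on $I_d(X_1,X_2)$, and in particular on the smaller subspace $S(X_1,X_2)$. By definition, $S(X_1,X_2)$ is spanned by elements of the form $(h\otimes \id)\circ\mu(w) - h(v_2)\cdot 1_R$ with $h\in Z_2^\perp$ and $w\in X_1$, so $f$ annihilates each of these generators.

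To prove $f\cdot w \in f(1)v_2 + Z_2$ for a fixed $w\in X_1$, it suffices by the definition of $Z_2^\perp$ to check that
$$
h(f\cdot w - f(1)v_2) = 0 \qquad\text{for every } h\in Z_2^\perp.
$$
The first step is a ``Fubini'' computation: since $h$ and $f$ act on disjoint tensor factors of $V\otimes R_d$, applying $h$ to $f\cdot w = (\id\otimes f)\circ\mu(w)$ gives
$$
h(f\cdot w) = (h\otimes f)(\mu(w)) = f\bigl((h\otimes\id)\circ\mu(w)\bigr).
$$
Here I use the assumption built into the definition of the category that $d\geq \ell(V)$, so that $\mu(w)\in V\otimes R_d$ and $f$ may legitimately be paired with its $R$-factor.

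Combining this with $h(f(1)v_2) = f(1)h(v_2) = f\bigl(h(v_2)\cdot 1_R\bigr)$ gives
$$
h(f\cdot w - f(1)v_2) = f\bigl((h\otimes\id)\circ\mu(w) - h(v_2)\cdot 1_R\bigr).
$$
The argument of $f$ on the right-hand side is by construction one of the generators of $S(X_1,X_2) \subseteq I_d(X_1,X_2)$, hence $f$ kills it and the expression vanishes. Since $h\in Z_2^\perp$ was arbitrary, $f\cdot w - f(1)v_2 \in (Z_2^\perp)^\perp = Z_2$, and taking the union over $w\in X_1$ yields $f\cdot X_1 \subseteq f(1)v_2 + Z_2$. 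There is no genuine obstacle: the only subtlety is verifying that the module action $f\cdot w = (\id\otimes f)\mu(w)$ intertwines correctly with pairing against covectors $h\in V^\star$, which is handled by the tensor manipulation above.
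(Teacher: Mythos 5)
Your proof is correct and follows essentially the same route as the paper: pair $f\cdot w - f(1)v_2$ against an arbitrary $h\in Z_2^\perp$, use the identity $h\circ(\id\otimes f)\circ\mu = f\circ(h\otimes\id)\circ\mu$, and observe that the resulting argument of $f$ is a generator of $S(X_1,X_2)\subseteq I_d(X_1,X_2)$, which $f$ annihilates. Your write-up is in fact slightly more careful than the paper's, which states the congruence $(h\otimes\id)\circ\mu(w)\equiv h(v_2)\otimes 1$ as an equality and leaves implicit that it only holds modulo the ideal on which $f$ vanishes.
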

\begin{proof}

For $w\in X_1$ and $h\in Z_2^\perp$ we have
$$
(h\otimes \id)\circ \mu(w)=h(v_2)\otimes 1
$$
If we apply $f$, we have
$$
h(f\cdot w)=h\circ(\id\otimes f)\circ \mu(w)=f\circ (h\otimes \id )\circ \mu(w)=f(h(v_2)\otimes 1)=f(1)h(v_2).
$$
Therefore, $h(f\cdot w-f(1)v_2)=0$, so we get that $f\cdot w\in f(1)v_2+Z_2$.

\end{proof}

\begin{corollary}If $X_1\cong_d X_2$ then $\dim X_1=\dim X_2$.
\end{corollary}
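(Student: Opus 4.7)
The plan is to interpret morphisms in $\category_d(V)$ as elements of $R_d^\star$ that act as linear endomorphisms of $V$, and then invoke the preceding lemma. First I would dispose of the edge cases: by construction $\Hom_d(X,\emptyset) = 0$ whenever $X$ is non-empty (since $S(X,\emptyset) = \{1\}$ forces $I_d(X,\emptyset) = R_d$), so an empty and a non-empty affine subspace cannot be isomorphic; and if both are empty the claim is vacuous under the convention $\dim(\emptyset)=-\infty$. Hence it suffices to assume $X_1 = v_1+Z_1$ and $X_2 = v_2+Z_2$ are both non-empty, and to fix mutually inverse morphisms $f \in \Hom_d(X_1,X_2)$ and $g \in \Hom_d(X_2,X_1)$ realizing the isomorphism $X_1 \cong_d X_2$.

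The key step will be translating the categorical identities $g \circ f = \id_{X_1}$ and $f \circ g = \id_{X_2}$ into honest equalities in $R_d^\star$. Composition in $\category_d(V)$ is, by construction, the convolution product $h_1 \cdot h_2 := (h_1 \otimes h_2) \circ \Delta$ on $R_d^\star$ (that is exactly what the dualization of $\overline{\Delta}$ yields), and I would identify the categorical identity $\id_{X_i}$ with the counit $\sigma_e \in R_d^\star$. For this I need to check that $\sigma_e$ annihilates $I_d(X_i,X_i)$: on a generator $(h \otimes \id) \circ \mu(w) - h(v_i) \otimes 1$ of $S(X_i,X_i)$ with $h \in Z_i^\perp$ and $w \in X_i$, evaluation at $\sigma_e$ gives $h(w) - h(v_i) = h(w-v_i) = 0$ since $w - v_i \in Z_i$. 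With this in hand the isomorphism relations become $g \cdot f = \sigma_e = f \cdot g$ as elements of $R_d^\star$.

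To finish, I would exploit that $V$, having $\ell(V) \leq d$, is naturally an $R_d^\star$-module via $h \cdot w := (\id \otimes h) \circ \mu(w)$; the module axiom $(h_1 \cdot h_2) \cdot w = h_1 \cdot (h_2 \cdot w)$ is a direct unwinding of the coassociativity diagram defining a representation. The relations $g \cdot f = \sigma_e = f \cdot g$ then imply that the linear maps $w \mapsto f \cdot w$ and $w \mapsto g \cdot w$ are mutually inverse automorphisms of $V$; in particular each preserves the dimension of every affine subspace. Combining with the preceding lemma, $f \cdot X_1 \subseteq f(1) v_2 + Z_2$, an affine subspace of dimension $\dim Z_2 = \dim X_2$, so $\dim X_1 = \dim(f \cdot X_1) \leq \dim X_2$; the symmetric argument with $g$ gives the reverse inequality. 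The only real obstacle is the bookkeeping in the middle step---identifying categorical composition with the convolution product and the categorical identity with $\sigma_e$---since once those identifications are in place, the conclusion is immediate from the preceding lemma.
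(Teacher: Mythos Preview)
Your argument is correct and is precisely the route the paper implicitly has in mind: the corollary is stated without proof immediately after the lemma $f\cdot X_1\subseteq f(1)v_2+Z_2$, and your write-up supplies exactly the missing details---that categorical composition is the convolution product on $R_d^\star$, that $\sigma_e$ serves as the identity morphism (your check that $\sigma_e$ kills $S(X_i,X_i)$ is the right one), and that therefore mutually inverse morphisms act as mutually inverse linear automorphisms of $V$. The only cosmetic remark is that the edge case $f(1)=0$ is harmless since $f(1)v_2+Z_2$ is still a coset of $Z_2$, hence has dimension $\dim Z_2$ regardless.
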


\begin{lemma}\label{lem:zero_nonzero}
Suppose that $0\in X_1$ and $0\not \in X_2$. Then $\Hom_d(X_1,X_2)=0$.
\end{lemma}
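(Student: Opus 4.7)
The plan is to exploit the fact that $0 \in X_1$ and $0 \notin X_2$ to produce a nonzero constant inside $S(X_1, X_2)$, which then immediately forces $I_d(X_1, X_2) = R_d$ and hence $\Hom_d(X_1, X_2) = 0$.

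Concretely, write $X_1 = v_1 + Z_1$ and $X_2 = v_2 + Z_2$. Since $0 \in X_1$ we may take $v_1 = 0$, so that the element $w = 0$ belongs to $X_1$. Since $0 \notin X_2$, the vector $v_2$ does not lie in $Z_2$, so there exists $f \in Z_2^\perp \subseteq V^\star$ with $f(v_2) \neq 0$; after rescaling we may assume $f(v_2) = 1$.

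Now I plug $w = 0$ and this $f$ into the defining generators of $S(X_1, X_2)$. Since $\mu : V \to V \otimes R$ is $k$-linear we have $\mu(0) = 0$, so
$$
(f \otimes \id)\circ \mu(0) - f(v_2)\otimes 1 = 0 - 1\otimes 1 = -1 \in R.
$$
Hence $1 \in S(X_1, X_2)$. Because $1 \in S(X_1, X_2) \cap R_0$, the definition of $(S)_d$ gives $1 \cdot R_d = R_d \subseteq (S(X_1, X_2))_d$, so $(S(X_1, X_2))_d = R_d$ and therefore $I_d(X_1, X_2) = ((S(X_1, X_2)))_d = R_d$.

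Consequently $R_d / I_d(X_1, X_2) = 0$, and dualizing yields $\Hom_d(X_1, X_2) = (R_d/I_d(X_1, X_2))^\star = 0$, as claimed. There is no real obstacle here; the only subtlety is to notice that the linearity of $\mu$ makes the generator corresponding to $w = 0$ a pure scalar, which is exactly what we need to collapse the truncated ideal.
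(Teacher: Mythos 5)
Your proof is correct and follows essentially the same route as the paper: take the generator of $S(X_1,X_2)$ corresponding to $w=0\in X_1$ and some $f\in Z_2^\perp$ with $f(v_2)\neq 0$ (which exists since $0\notin X_2$ forces $v_2\notin Z_2$), observe that $\mu(0)=0$ makes this generator a nonzero constant, and conclude $I_d(X_1,X_2)=R_d$, hence $\Hom_d(X_1,X_2)=0$. Your extra remarks on the rescaling of $f$ and on why a constant in $S(X_1,X_2)$ forces $(S(X_1,X_2))_d=R_d$ are just more explicit versions of the paper's argument.
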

\begin{proof}
Write $X_2=v_2+Z_2$, and choose $f\in Z_2^\perp$ with $f(v_2)\neq 0$.
then 
$$
(f\otimes \id)\circ \mu(0)- f(v_2)\otimes 1= -f(v_2)\otimes 1
$$
is a nonzero multiple of $1\in R_{\ell(V)}$. It follows that $I_d(X_1,X_2)=R_d$ and $\Hom_d(X_1,X_2)=0$.
\end{proof}
\subsection{Constructible equivariants}\label{sec:constructible_equivariants}
\begin{definition}\label{def:equiv}
Suppose that $d$ is a positive integer. We inductively define the notion of a $d$-constructible equivariant:
\begin{enumerate}
\item  If $f:V\to V'$ is $G$-equivariant and linear, and $\ell(V),\ell(V')\leq d$, then $f$ is $d$-constructible;
\item if $f_1,f_2:V\to V'$ are $d$-constructible, and $\lambda_1,\lambda_2\in k$, then $\lambda_1f_1+\lambda_2f_2$
is $d$-constructible;
\item if $\ell(V_1)+\ell(V_2)\leq d$, then the bilinear map
$V_1\oplus V_2\to V_1\otimes V_2$ defined by $(v_1,v_2)\mapsto v_1\otimes v_2$ is
$d$-constructible;
\item if $f_1:V_1\to V_2$ and $f_2:V_2\to V_3$ are $d$-constructible, then the composition $f_2\circ f_1$ is $d$-constructible.
\end{enumerate}
We will denote the class of $d$-constructible equivariants by $\Eq_d(V)$.
\end{definition}

\begin{proposition}\label{prop:equiv_separates}
Suppose that $f:V\to V'$ is a $d$-constructible equivariant with $f(v_1)=0$ and $f(v_2)\neq 0$.
Then we have $\Hom_d(v_1,v_2)$. In particular, $v_1$ and $v_2$
are not isomorphic in ${\mathcal C}_d(V)$.
\end{proposition}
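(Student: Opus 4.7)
The plan is to reduce the conclusion to the containment
$$
I_d(f(v_1),f(v_2))\subseteq I_d(v_1,v_2)
$$
of subspaces of $R_d$, which I will establish for every $d$-constructible equivariant $f:V\to V'$. Dualizing gives $\Hom_d(v_1,v_2)\subseteq \Hom_d(f(v_1),f(v_2))$ as subspaces of $R_d^\star$. Since $f(v_1)=0$ while $f(v_2)\neq 0$, Lemma~\ref{lem:zero_nonzero} (applied with $X_1=\{0\}=\{f(v_1)\}$ and $X_2=\{f(v_2)\}$) gives $\Hom_d(f(v_1),f(v_2))=0$. Hence $\Hom_d(v_1,v_2)=0$, and in particular $v_1$ and $v_2$ are not isomorphic in $\mathcal{C}_d(V)$.

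I would prove the containment by induction on the four-clause construction in Definition~\ref{def:equiv}. For clause (1), with $f$ linear and $G$-equivariant and $\ell(V),\ell(V')\leq d$, the equivariance relation $\mu'\circ f=(f\otimes\id)\circ\mu$ converts each generator $(h\otimes\id)\circ\mu'(f(v_1))-h(f(v_2))\otimes 1$ of $S(f(v_1),f(v_2))$ into the generator of $S(v_1,v_2)$ indexed by $h\circ f\in V^\star$; hence $S(f(v_1),f(v_2))\subseteq S(v_1,v_2)\subseteq I_d(v_1,v_2)$. For clause (2), the same generator expands bilinearly as $\lambda_1$ times the analogous generator coming from $f_1$ plus $\lambda_2$ times the one from $f_2$, and both summands lie in $I_d(v_1,v_2)$ by the inductive hypothesis. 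Clause (4) is immediate by chaining: $I_d(gf(v_1),gf(v_2))\subseteq I_d(f(v_1),f(v_2))\subseteq I_d(v_1,v_2)$.

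The main obstacle is clause (3): the bilinear tensor map $V_1\oplus V_2\to V_1\otimes V_2$, where the degree bound $\ell(V_1)+\ell(V_2)\leq d$ is essential. Writing the two inputs as $(a_1,b_1)$ and $(a_2,b_2)$, one first checks that $S((a_1,b_1),(a_2,b_2))=S(a_1,a_2)+S(b_1,b_2)$, so that $I_d((a_1,b_1),(a_2,b_2))\supseteq S(a_1,a_2)+S(b_1,b_2)$. Using the coaction $\mu_{12}(a_1\otimes b_1)=\sum_{i,j}(a_{1,i}\otimes b_{1,j})\otimes r_is_j$, a generator of $S(a_1\otimes b_1,a_2\otimes b_2)$ indexed by $h=\sum_k h_{1,k}\otimes h_{2,k}\in V_1^\star\otimes V_2^\star$ rewrites as $\sum_k p_kq_k-\sum_k\alpha_k\beta_k$, where $p_k=(h_{1,k}\otimes\id)\circ\mu_1(a_1)\in R_{\ell(V_1)}$, $q_k=(h_{2,k}\otimes\id)\circ\mu_2(b_1)\in R_{\ell(V_2)}$, $\alpha_k=h_{1,k}(a_2)$, $\beta_k=h_{2,k}(b_2)$, and crucially $p_k-\alpha_k\in S(a_1,a_2)$, $q_k-\beta_k\in S(b_1,b_2)$. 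Applying $pq-\alpha\beta=(p-\alpha)q+\alpha(q-\beta)$ termwise lands the whole expression in $S(a_1,a_2)\cdot R_{\ell(V_2)}+S(b_1,b_2)$, which sits inside $\bigl(S(a_1,a_2)+S(b_1,b_2)\bigr)_d$ precisely because $\ell(V_1)+\ell(V_2)\leq d$, and therefore inside $I_d((a_1,b_1),(a_2,b_2))$. The remaining work is purely bookkeeping to ensure that all the filtration degrees align with the truncated-ideal construction $((\cdot))_d$ at each step.
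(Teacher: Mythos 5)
Your proof is correct, and the main difference from the paper is one of packaging rather than substance. The paper deduces the proposition in two lines: by Lemma~\ref{lem:make_functor} the $d$-constructible equivariant $f$ induces a $d$-constructible (very faithful) functor with ${\mathcal F}(\{v\})=\{f(v)\}$, so $\Hom_d(v_1,v_2)\subseteq \Hom_d(f(v_1),f(v_2))$ holds automatically, and then Lemma~\ref{lem:zero_nonzero} finishes exactly as in your last step. Your induction over the clauses of Definition~\ref{def:equiv}, proving $S(f(v_1),f(v_2))\subseteq I_d(v_1,v_2)$ and hence $I_d(f(v_1),f(v_2))\subseteq I_d(v_1,v_2)$, is in substance the singleton-object case of the computations the paper uses to build constructible functors: your clause (1) is the computation of Lemma~\ref{lem3.2}, your identity $S((a_1,b_1),(a_2,b_2))=S(a_1,a_2)+S(b_1,b_2)$ is Lemma~\ref{lem3.3}, and your rewriting $pq-\alpha\beta=(p-\alpha)q+\alpha(q-\beta)$ together with the degree bound $\ell(V_1)+\ell(V_2)\leq d$ is precisely the content of Lemma~\ref{lem3.4}; clauses (2) and (4) are the easy linear and composition steps in both treatments. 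What your route buys is self-containedness: it supplies explicitly the induction that the paper compresses into ``follows easily from the inductive definitions'' in Lemma~\ref{lem:make_functor}, and it never needs the functor formalism. What the paper's route buys is reuse and strength: the functorial statement applies to all affine subspaces, not just points, and is needed elsewhere (Corollary~\ref{cor:dim_equal}, Lemma~\ref{lem:dconstr}, Lemma~\ref{lem:dimF}). One cosmetic remark: in clause (2) the expansion of the generator is linear (in $f=\lambda_1f_1+\lambda_2f_2$), not bilinear, but your computation is the right one, and the remaining ``bookkeeping'' you mention is indeed only the observation that $I_d(v_1,v_2)$ is $d$-truncated and that every target occurring in Definition~\ref{def:equiv} satisfies $\ell\leq d$.
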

The proof will be given after Lemma~\ref{lem:make_functor}
\subsection{$\Eq_{2d}(V)$ is at least as powerful as $\CC_{d}$ }
Let $X$ be a set with $n$ elements.
Consider the symmetric group $G=\Sigma(X)\cong \Sigma_n$.
and let $U\cong k^n$ be the vector space with basis $X$.
The action of $G$ on $U$ gives us a natural inclusion $\tau:G\hookrightarrow \End(U)$. Let $W\subseteq k[G]$
be the vector space spanned by the restrictions of linear and constant functions $\End(U)$ to $G$.
Since $\tau$ is injective, $W$ generates $k[G]$. It is clear that $W$ is stable under the left and right action
of $G$. For $g\in \Sigma_n$ the inverse is just the transpose matrix. From this follows that $\iota(W)\subseteq W$.

We will write $U^{\otimes m}$ for
$$
\underbrace{U\otimes \cdots\otimes U}_m.
$$
To a subset $Y\subseteq X^m$, we can associate a tensor $\tens(Y)\in U^{\otimes m}$ defined by
$$
\tens(Y)=\sum_{(x_1,\dots,x_m)\in Y^m} x_1\otimes \cdots \otimes x_m.
$$

We can define a bilinear  multiplication $\star:U^{\otimes m}\oplus U^{\otimes m}\to U^{\otimes m}$
by 
$$
(x_1\otimes \cdots\otimes x_m)\star (y_1\otimes \cdots \otimes y_m)=
\left\{
\begin{array}{ll}
x_1\otimes \cdots\otimes x_m & \mbox{if $(x_1,\dots,x_m)=(y_1,\dots,y_m)$;}\\
0 & \mbox{otherwise}\end{array}\right.
$$

Define ${\bf 1}=\sum_{x\in X}x$.
For every $i$, we define the linear projection $\pr_i:U^{\otimes d}\to U^{\otimes d}$
by
$$
\pr_i(x_1\otimes \cdots \otimes x_d)=x_1\otimes\cdots\otimes x_{i-1}\otimes {\bf 1}\otimes  x_{i+1}\otimes \cdots \otimes x_m.
$$


For $m\leq d$, the equivariant maps $\star$ and $\pr_i$ defined above lie in $\EE_{2d}(U^{\otimes d})$.

Suppose that $m_1,\dots,m_s$ are positive integers. Define
$$
V=U^{\otimes m_1}\oplus \cdots \oplus U^{\otimes m_s}\oplus k.
$$
Define ${\bf 1}_d:V\to U^{\otimes d}$ by
$${\bf 1}_d(v_1,\dots,v_s,a)=a ({\bf 1}\otimes \cdots \otimes {\bf 1}).
$$
Then ${\bf 1}_d$ lies in $\EE_{2d}(V)$.

If $Y_i\subseteq X^{m_i}$ for $i=1,2,\dots,s$, then $\Gamma=\langle X,Y_1,\dots,Y_s\rangle$ is a structure
with $s$ relational symbols.
Let $\LL_d=\LL_d(m_1,\dots,m_s)$ be the $d$ variable first order language for this structure, and
let $\CC_d=\CC_d(m_1,\dots,m_s)$ be the $d$-variable  language with counting.
For $\Gamma=\langle X,Y_1,\dots,Y_s\rangle$, define
$$
A_\Gamma:=(\tens(Y_1),\dots,\tens(Y_s),1)\in V.
$$
\begin{definition}
Suppose that $\varphi(x_1,\dots,x_d)$ is a formula in $\CC_d$, and 
$$
f:V\to U^{\otimes d}.
$$
We say that $f$ represents $\varphi$, if
$$
f(A_{\Gamma})=
\sum_{\Gamma\models\varphi(x_1,\dots,x_d)}
x_1\otimes \cdots \otimes x_d.
$$
for all $Y_1,\dots,Y_s$.
\end{definition}
\begin{theorem}
Suppose that  $k$ is a field of characteristic 0 or $p>n$.
Then every formula $\varphi(x_1,\dots,x_d)$  in $\CC_d$
is represented by an equivariant $f\in \EE_{2d}(V)$.
\end{theorem}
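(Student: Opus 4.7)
The plan is to proceed by induction on the complexity of $\varphi$. The toolkit consists of operations already shown to lie in $\EE_{2d}$: the bilinear product $\star$, the projections $\pr_i$, the equivariant $\jj_d$, the bilinear tensor map $V_1 \oplus V_2 \to V_1 \otimes V_2$ (clause (3) of Definition~\ref{def:equiv}, applicable when $\ell(V_1)+\ell(V_2)\leq 2d$), arbitrary $k$-linear combinations, and compositions. Together with these I freely use the linear $G$-equivariant permutations of the tensor factors in $U^{\otimes d}$ and the injections of direct summands, all of which are $2d$-constructible by clause (1) since $\ell(U^{\otimes d})\leq d$.

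For the base case I treat the atomic formulas. An atomic formula $Y_i(x_{j_1},\ldots,x_{j_{m_i}})$ is represented by first projecting $A_\Gamma$ onto its $U^{\otimes m_i}$ summand to obtain $\tens(Y_i)$, tensoring with $\jj_{d-m_i}(A_\Gamma)=\jj^{\otimes(d-m_i)}$ via the bilinear tensor map (allowed since $m_i+(d-m_i)=d\leq 2d$), and then applying the permutation of tensor factors in $U^{\otimes d}$ that places the $Y_i$-entries at positions $j_1,\ldots,j_{m_i}$. An atomic equality $x_i=x_j$ is represented by applying the linear $G$-equivariant \emph{diagonal projection} $\delta_{ij}\colon U^{\otimes d}\to U^{\otimes d}$, defined on basis tensors by $z_1\otimes\cdots\otimes z_d\mapsto z_1\otimes\cdots\otimes z_d$ if $z_i=z_j$ and $0$ otherwise, to the tensor $\jj_d(A_\Gamma)=\jj^{\otimes d}$; this $\delta_{ij}$ is manifestly $\Sigma_n$-equivariant and linear, hence lies in $\EE_{2d}$ by clause (1).

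For the inductive step on Boolean connectives, conjunction $\varphi_1\wedge\varphi_2$ is represented by $f_{\varphi_1}(A_\Gamma)\star f_{\varphi_2}(A_\Gamma)$, because $\star$ realizes the coefficient-wise (Hadamard) product and the indicator of a conjunction is the product of indicators; negation $\neg\varphi$ is represented by $\jj^{\otimes d}-f_\varphi$, and disjunction follows by De Morgan. All remain in $\EE_{2d}$, the pairing $V\to U^{\otimes d}\oplus U^{\otimes d}$ needed to feed $\star$ being assembled from $f_{\varphi_1}, f_{\varphi_2}$ and the two summand-injections by linear combination.

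The central case is the counting quantifier $\exists_r x_j\,\varphi$. Setting $T=\pr_j\circ f_\varphi(A_\Gamma)$, a direct computation shows that the coefficient of $z_1\otimes\cdots\otimes z_d$ in $T$ equals the count $c(z_1,\ldots,\hat z_j,\ldots,z_d):=|\{x\in X:(z_1,\ldots,z_{j-1},x,z_{j+1},\ldots,z_d)\models\varphi\}|$, and this coefficient is independent of $z_j$. The formula $\exists_r x_j\,\varphi$ is represented by the tensor whose coefficient on each basis vector is the indicator that $c=r$, which differs from $T$ only by applying a scalar polynomial $P\in k[t]$ coefficient-wise. Since $c$ takes values in $\{0,1,\ldots,n\}$, Lagrange interpolation yields $P(t)=\prod_{i\neq r,\,0\leq i\leq n}(t-i)\big/\prod_{i\neq r,\,0\leq i\leq n}(r-i)$, whose denominator is invertible in $k$ precisely because $\operatorname{char}k=0$ or $\operatorname{char}k>n$. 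Coefficient-wise application of $P$ is realized by forming the appropriate $k$-linear combination of the iterated Hadamard powers $T,\, T\star T,\, T\star T\star T,\ldots,T^{\star n}$ together with $\jj^{\otimes d}$ (for the constant term); each iterated $\star$ stays within $\EE_{2d}(V)$, and this interpolation step is the only place where the characteristic hypothesis is used. The main obstacle is exactly this conversion of a counting tensor into an indicator tensor inside a bilinear/linear toolkit, and it is resolved precisely by Lagrange interpolation combined with Hadamard powers.
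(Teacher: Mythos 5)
Your proof is correct and follows essentially the same route as the paper: atomic formulas via linear equivariants, $\wedge$ via $\star$, $\neg$ via $\jj_d-f$, and the counting quantifier via $\pr_j$ followed by coefficient-wise application of a polynomial that is $1$ at $r$ and $0$ at the other values in $\{0,\dots,n\}$, which is exactly where the hypothesis on the characteristic enters. The only cosmetic difference is that you realize this polynomial by Lagrange interpolation and explicit Hadamard powers $T^{\star m}$, whereas the paper packages the same idea as the recursively defined operator $[q(t)]$.
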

\begin{proof}
For $y_1,\dots,y_{m_i}\in \{x_1,\dots,x_d\}$, the  formula $Y_i(x_1,\dots,x_{m_i})$ is represented by an equivariant linear map
$$
V\to U^{\otimes d}.
$$

The formula $x_i=x_j$ is represented by an equivariant linear map.

Suppose that $\varphi_1(x_1,\dots,x_d)$ and $\varphi_2(x_1,\dots,x_d)$
are represented by the covariants $f_1,f_2\in \EE_{2d}(V)$ respectively.
Then $f_1\star f_2$ represents the formula $\varphi_1\wedge \varphi_2$, and $f_1\star f_2\in \EE_{2d}(V)$.

If $\varphi(x_1,\dots,x_d)$ is represented by $f\in \EE_{2d}(V)$, then
$\neg\varphi(x_1,\dots,x_d)$ is represented by ${\bf 1}_d-f$.

Suppose that $q(t)$ is a polynomial in $t$. Define an equivariant $[q(t)]:U^{\otimes d}\to U^{\otimes d}$
by
$$
[q(t)]\big(\sum_{x_1,\dots,x_d\in X} a(x_1,x_2,\dots,x_d)x_1\otimes \cdots \otimes {x_d}\big)=
\sum_{x_1,\dots,x_d\in X}
q(a(x_1,\dots,x_d)) x_1\otimes \cdots \otimes x_d.
$$
If we write $q(t)=tu(t)+a$ then we have
$$
[q(t)](v)=[u(t)](v)\star v+av
$$
It follows by induction on the degree of $q(t)$ that $[q(t)]$ lies in $\EE_{2d}(U^{\otimes d})$ for all polynomials $q(t)$.

Suppose that $\varphi(x_1,\dots,x_d)$ is represented by an equivariant
$f$.  There exists a polynomial $q(t)\in k[t]$ with $q(b)=1$ and $q(i)=0$ for $i\neq b$.
The formula
$$
\exists_b x_i\,\varphi(x_1,\dots,x_d)
$$
is represented by the covariant $[q(t)]\circ\pr_i\circ f$.
\end{proof}
\begin{corollary}\label{cor:orbit_equivar}
Suppose that $k$ is a field of characteristic 0 or $p>n$.
Suppose that $\Gamma=\langle X,Y_1,\dots,Y_s\rangle,\Gamma'=\langle X,Y_1',\dots,Y_s'\rangle$ 
are two structures (for example graphs) and
$$
f(A_\Gamma)=0\Leftrightarrow  f(A_{\Gamma'})=0
$$
for every $f\in \Eq_{2d}(V)$. Then we have
$$
\Gamma\models\varphi\Leftrightarrow \Gamma'\models\varphi'
$$
for every closed formula $\varphi$ in $\CC_d$.
\end{corollary}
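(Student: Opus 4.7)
The plan is to derive this corollary directly from the preceding theorem by lifting each closed formula to an open formula in $\CC_d$ whose representing equivariant detects its truth value. The single new idea is that a closed formula can be reshaped into a formula with free variables $x_1,\dots,x_d$ without altering its semantic content and without incurring any additional variables.

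Concretely, given a closed formula $\varphi\in\CC_d$, I would introduce the open formula
$$
\widetilde{\varphi}(x_1,\dots,x_d) \;:=\; \varphi\wedge(x_1=x_1)\wedge\cdots\wedge(x_d=x_d),
$$
which still lies in $\CC_d$ because variables may be re-used (and because $\widetilde\varphi$ introduces no new bound variables beyond those of $\varphi$). For every tuple $(a_1,\dots,a_d)\in X^d$ one has $\Gamma\models\widetilde{\varphi}(a_1,\dots,a_d)$ if and only if $\Gamma\models\varphi$, independently of the $a_i$'s. Applying the preceding theorem produces an equivariant $f\in\Eq_{2d}(V)$ representing $\widetilde{\varphi}$, and unwinding the definition of ``represents'' gives $f(A_\Gamma)=\jj^{\otimes d}$ when $\Gamma\models\varphi$ and $f(A_\Gamma)=0$ otherwise. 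Since $X$ is nonempty, $\jj^{\otimes d}\neq 0$, so $f(A_\Gamma)\neq 0$ iff $\Gamma\models\varphi$; the very same $f$ gives the identical dichotomy for $\Gamma'$.

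Invoking the hypothesis $f(A_\Gamma)=0\Leftrightarrow f(A_{\Gamma'})=0$ with this particular $f$ then yields $\Gamma\models\varphi\Leftrightarrow\Gamma'\models\varphi$, as desired. There is no genuine obstacle here: the substantive work is packaged into the preceding theorem, whose characteristic hypothesis ($0$ or $p>n$) is precisely what allows one to interpolate counting quantifiers by a polynomial $q(t)$ taking prescribed values on $\{0,1,\dots,n\}$. The only minor point worth stating carefully in the write-up is that the variable budget stays at $d$, which is why the trivial conjuncts $x_i=x_i$ re-use the original variable names rather than introducing fresh ones.
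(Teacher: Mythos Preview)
Your argument is correct and matches the paper's (implicit) approach: the paper states the corollary without proof, regarding it as immediate from the preceding theorem, and your write-up supplies exactly the natural details. The padding by $(x_1=x_1)\wedge\cdots\wedge(x_d=x_d)$ is a harmless precaution but not strictly necessary, since a closed formula may already be viewed as a formula $\varphi(x_1,\dots,x_d)$ with no variable actually occurring free, and the inductive proof of the theorem handles that case without change; either way the representing equivariant $f$ satisfies $f(A_\Gamma)={\bf 1}^{\otimes d}$ or $0$ according as $\Gamma\models\varphi$ or not.
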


\subsection{Constructible functors}\label{sec:constructible}
For the following definition, the reader should bear in mind that $\Hom_d(X_1,X_2)$ is a subspace of $R_d^\star$
for every representation $V$ with $\ell(V)\leq d$ and every two objects $X_1,X_2$ of ${\mathcal C}_d(V)$.
\begin{definition}
We will call a covariant functor ${\mathcal F}:{\mathcal C}_d(V)\to {\mathcal C}_d(V')$
 {\em very faithful} if ${\mathcal F}(\phi)=\phi$ for every morphism.
 A contravariant functor ${\mathcal F}:{\mathcal C}_d(V)\to {\mathcal C}_d(V')$ is called {\em very faithful}
 if ${\mathcal F}(\phi)=\phi\circ\iota$ for every morphism $\phi$.
\end{definition}
Note that a very faithful function ${\mathcal F}:{\mathcal C}_d(V)\to {\mathcal C}_d(V')$ is uniquely
determined by how it acts on objects.
\begin{lemma}\label{lem3.1}
Suppose that $\ell(V),\ell(V')\leq d$. There exist very faithful covariant functors ${\mathcal F},{\mathcal G},{\mathcal H}:{\mathcal C}_d(V)\to {\mathcal C}_d(V')$
such that ${\mathcal F}(X)=\emptyset,{\mathcal G}(X)=\{0\},{\mathcal H}(X)=V'$
for all $X\in \Aff(V)$. Also, for every $\lambda\in k$ there exists a very faithful functor ${\mathcal I}:\category_d(V)\to\category_d(k)$
such that ${\mathcal I}(X)=\{\lambda\}\in \Aff(k)$ for all $X\in \Aff(V)$.
\end{lemma}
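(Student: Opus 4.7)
The plan is to observe that each of the four ``constant'' assignments on objects extends to a covariant functor simply by acting as the identity on morphisms. The key structural fact we will exploit is that $\Hom_d(X_1,X_2)$ is by construction a subspace of $R_d^\star$ for every pair of objects, that composition in $\category_d(V)$ is the restriction of the associative unital multiplication on $R_d^\star$ dual to $(\Delta,\sigma_e)$, and that this multiplication is the same in every category $\category_d(\cdot)$. Consequently, to define a very faithful covariant functor with constant object image $Y$, it suffices to verify the single inclusion $\Hom_d(X_1,X_2)\subseteq \Hom_d(Y,Y)$ inside $R_d^\star$ for all $X_1,X_2$, or equivalently to show $\Hom_d(Y,Y)=R_d^\star$.

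The main step is thus a case-by-case verification that $S(Y,Y)=0$ for each of the four target objects, since then $I_d(Y,Y)=0$ and $\Hom_d(Y,Y)=R_d^\star$. For $\mathcal{F}(X)=\emptyset$, this is immediate from the definitional convention $S(\emptyset,\emptyset)=\{0\}$. For $\mathcal{G}(X)=\{0\}$, write $\{0\}=0+0$ so that $Z_2^\perp=(V')^\star$; but for the single point $w=0$ and any $f\in (V')^\star$ the generator $(f\otimes\id)\circ\mu'(0)-f(0)\otimes 1$ vanishes. For $\mathcal{H}(X)=V'$, writing $V'=0+V'$ gives $Z_2^\perp=0$, so the spanning set of $S(V',V')$ is empty. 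For $\mathcal{I}(X)=\{\lambda\}\in\Aff(k)$ with $k$ the trivial representation, one has $\mu_k(\lambda)=\lambda\otimes 1$, and a direct computation of the generator again yields $0$.

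With these four vanishings in hand, the functor axioms are automatic. Compatibility with composition holds because composition on both sides is the multiplication in $R_d^\star$, which does not depend on any ambient object. Compatibility with identities holds because $\id_X\in \Hom_d(X,X)$ is always the counit $\sigma_e\in R_d^\star$: indeed, $\sigma_e$ acts as the identity on every representation $Z$ via $(\id\otimes\sigma_e)\circ\mu=\id_Z$, and a quick computation shows $\sigma_e$ vanishes on every generator of $S(X,X)$ (it sends $(f\otimes\id)\circ\mu(w)-f(v)\otimes 1$ to $f(w)-f(v)=f(w-v)=0$ since $w-v\in Z$ and $f\in Z^\perp$). Since $\sigma_e$ is the same element of $R_d^\star$ independently of $X$, the identity is preserved.

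I do not foresee any real obstacle here; the content of the lemma is simply that the $\Hom$-spaces at the ``degenerate'' target objects $\emptyset$, $\{0\}$, $V'$, and $\{\lambda\}$ are all of $R_d^\star$, so that every morphism of $\category_d(V)$ can be reinterpreted verbatim as a morphism in the target category. The only mild subtlety is bookkeeping in the $\mathcal{I}$ case, where one must remember that $\ell(k)=0\leq d$ so that $\category_d(k)$ is defined, but this is guaranteed by the hypothesis $d\geq \ell(V)\geq 0$.
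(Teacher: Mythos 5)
Your proof is correct and follows essentially the same route as the paper, whose one-line argument is precisely that $\Hom_d(\emptyset,\emptyset)=\Hom_d(\{0\},\{0\})=\Hom_d(V',V')=\Hom_d(\{\lambda\},\{\lambda\})=R_d^\star$, so that every morphism of $\category_d(V)$ is literally an element of the target Hom-space. Your case-by-case verification that $S(Y,Y)=0$, together with the observation that composition and the identity $\sigma_e$ live in $R_d^\star$ independently of the objects, just makes explicit what the paper leaves as ``clear.''
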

\begin{proof}
This is clear because
$$
\Hom_d(\emptyset,\emptyset)=\Hom_d(\{0\},\{0\})=\Hom_d(V_2,V_2)=R_d^\star
$$
and for $\{\lambda\}\in \Aff(k)$, we have $\Hom_d(\{\lambda\},\{\lambda\})=R_d^\star$ as well.
\end{proof}
\begin{lemma}\label{lem3.2}
Suppose that $\ell(V),\ell(V')\leq d$ and $f:V\to V'$ is $G$-equivariant and linear. 
Then there exists a very faithful covariant functor ${\mathcal F}:\category_d(V)\to \category_d(V')$
such that ${\mathcal F}(X)=f(X)$ for all $X\in \Aff(V)$.
\end{lemma}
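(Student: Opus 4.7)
The plan is to exploit the observation, stated just before Lemma~\ref{lem3.1}, that a very faithful covariant functor is determined by its action on objects. Thus it suffices to set $\mathcal{F}(X) := f(X)$ for $X \in \Aff(V)$ and $\mathcal{F}(\phi) := \phi$ on morphisms, and then verify that this prescription is well-defined on morphisms and preserves composition and identities.

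The crux is the inclusion
$$
S(f(X_1), f(X_2)) \subseteq S(X_1, X_2) \subseteq R_d
$$
for all nonempty $X_1, X_2 \in \Aff(V)$. Writing $X_i = v_i + Z_i$, linearity of $f$ gives $f(X_i) = f(v_i) + f(Z_i)$. A typical generator of $S(f(X_1), f(X_2))$ has the form $(h \otimes \id) \circ \mu'(f(w)) - h(f(v_2)) \otimes 1$ with $h \in f(Z_2)^\perp$ and $w \in X_1$. The equivariance identity $\mu' \circ f = (f \otimes \id) \circ \mu$ rewrites it as
$$
((h \circ f) \otimes \id) \circ \mu(w) - (h \circ f)(v_2) \otimes 1,
$$
and because $h \in f(Z_2)^\perp$ the composition $h \circ f$ automatically lies in $Z_2^\perp$. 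Hence this element is a generator of $S(X_1, X_2)$. The edge cases where $X_1$ or $X_2$ equals $\emptyset$ reduce to the stipulated values $S(\emptyset, X) = \{0\}$ and $S(X, \emptyset) = \{1\}$, using that $X \neq \emptyset$ implies $f(X) \neq \emptyset$ (since $X$ is a nonempty affine subspace and $f$ is linear).

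Applying the monotone operation $((\cdot))_d$ to both sides yields $I_d(f(X_1), f(X_2)) \subseteq I_d(X_1, X_2)$. The natural surjection $R_d/I_d(f(X_1),f(X_2)) \twoheadrightarrow R_d/I_d(X_1,X_2)$ then dualizes to an inclusion
$$
\Hom_d(X_1, X_2) \hookrightarrow \Hom_d(f(X_1), f(X_2)),
$$
which is realized simply by viewing a functional $\phi \in R_d^\star$ that annihilates $I_d(X_1,X_2)$ as also annihilating the smaller subspace $I_d(f(X_1), f(X_2))$. Therefore $\mathcal{F}(\phi) := \phi$ is well-defined on morphisms.

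Finally, functoriality is formal: the composition law in each category comes from dualizing the fixed co-multiplication $\Delta$ on $R_d$, which does not depend on the chosen objects, and the identity morphism on any $X$ is the restriction of the counit $\sigma_e$ to $R_d$, again independent of $X$. Consequently $\mathcal{F}$ preserves composition and identities. The only substantive computation is the equivariance-based rewrite showing $S(f(X_1), f(X_2)) \subseteq S(X_1, X_2)$; everything else follows from the formal properties of the truncated-ideal construction and of the coalgebra $R_d$.
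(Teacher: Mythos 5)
Your proposal is correct and follows essentially the same route as the paper: the key step in both is the equivariance rewrite $(g\otimes\id)\circ\mu'(f(w))-g(f(v_2))\otimes 1=((g\circ f)\otimes\id)\circ\mu(w)-(g\circ f)(v_2)\otimes 1$ with $g\circ f\in Z_2^\perp$, giving $S(f(X_1),f(X_2))\subseteq S(X_1,X_2)$ and hence $\Hom_d(X_1,X_2)\subseteq\Hom_d(f(X_1),f(X_2))$. Your additional remarks on the empty-set cases and on preservation of composition and identities are correct formal points that the paper leaves implicit.
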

\begin{proof}
 Suppose that $X_1=v_1+Z_1$, $X_2=v_2+Z_2$ lie in $\category_d(V)$.
Because $f$ is equivariant, we have a commutative diagram
$$
\xymatrix{
V\ar^{\mu}[r]\ar_{f}[d] & V\otimes R_d\ar^{f\otimes \id}[d]\\
V'\ar_{\mu}[r] & V'\otimes R_d}.
$$
We can write
$f(X_1)=f(v_1)+f(Z_1)$ and $f(X_2)=f(v_2)+f(Z_2)$.
The space $S(f(X_1),f(X_2))$ is spanned by elements of the form
$$
(g\otimes \id)\circ \mu(f(w))-g(f(v_2))\otimes 1
$$
where $g\in f(Z_2)^\perp$ and $w\in X_1$. Define $h=g\circ f\in Z_2^\perp$.
We have
\begin{multline*}
(g\otimes \id)\circ \mu(f(w))- g(f(v_2))\otimes 1=
(g\otimes \id)\circ (f\otimes \id)\circ \mu(w)- g(f(v_2))\otimes 1=\\
=
(h\otimes\id)\circ \mu(w)- h(v_2)\otimes 1\in S_d(X_1,X_2).
\end{multline*}
This shows that $S(f(X_1),f(X_2))\subseteq S(X_1,X_2)$.
Following the definitions, it is easy to see that this implies
$\Hom_d(X_1,X_2)\subseteq \Hom_d(\phi(X_1),\phi(X_2))$.
\end{proof}

\begin{lemma}\label{lem3.3}
Suppose that $\ell(V),\ell(V'),\ell(V'')\leq d$, and ${\mathcal F}':{\mathcal C}_d(V)\to {\mathcal C}_d(V')$
and ${\mathcal F}'':{\mathcal C}_d(V)\to {\mathcal C}_d(V'')$ are very faithful covariant (resp. contravariant) functors.
Then there exists a very faithful covariant (resp. contravariant) functor ${\mathcal F}:\category_d(V)\to\category_d(V'\oplus V'')$
such that ${\mathcal F}(X)={\mathcal F}'(X)\oplus {\mathcal F}''(X)$ for all $X\in \Aff(V)$.
\end{lemma}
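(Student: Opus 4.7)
Since a very faithful functor is uniquely determined by its action on objects, the only content to verify is that setting ${\mathcal F}(X) := {\mathcal F}'(X)\oplus {\mathcal F}''(X)$ on objects and ${\mathcal F}(\phi):=\phi$ on morphisms (in the covariant case) actually produces a functor ${\mathcal C}_d(V)\to {\mathcal C}_d(V'\oplus V'')$. Preservation of identities and composition is automatic: composition in every category $\category_d$ is the restriction of the algebra multiplication on $R_d^\star$, so if each $\phi$ is reinterpreted as the same element of $R_d^\star$ in the target category, no further check is needed. The substantive point is that every $\phi\in\Hom_d(X_1,X_2)$ actually lies in $\Hom_d({\mathcal F}(X_1),{\mathcal F}(X_2))$, which via the duality $\Hom_d(Y_1,Y_2)=I_d(Y_1,Y_2)^\perp\subseteq R_d^\star$ is equivalent to the ideal containment
$$I_d({\mathcal F}(X_1),{\mathcal F}(X_2))\subseteq I_d(X_1,X_2).$$

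The key computation is to reduce this to what is already known for ${\mathcal F}'$ and ${\mathcal F}''$. Writing $Y_i:={\mathcal F}'(X_i)=a_i+A_i\in\Aff(V')$ and $Z_i:={\mathcal F}''(X_i)=b_i+B_i\in\Aff(V'')$, we have ${\mathcal F}(X_i)=(a_i,b_i)+(A_i\oplus B_i)$. Under the natural identification $(V'\oplus V'')^\star=V'^\star\oplus V''^\star$, the annihilator decomposes as $(A_2\oplus B_2)^\perp=A_2^\perp\oplus B_2^\perp$, and the structure map $\mu$ on $V'\oplus V''$ restricts to the given maps on each summand. A typical generator $(f\otimes\id)\circ\mu(y,z)-f(a_2,b_2)\otimes 1$ of $S({\mathcal F}(X_1),{\mathcal F}(X_2))$, with $f=(f',f'')\in A_2^\perp\oplus B_2^\perp$ and $(y,z)\in Y_1\oplus Z_1$, therefore splits as the sum of the generator $(f'\otimes\id)\circ\mu(y)-f'(a_2)\otimes 1$ of $S(Y_1,Y_2)$ and the generator $(f''\otimes\id)\circ\mu(z)-f''(b_2)\otimes 1$ of $S(Z_1,Z_2)$, so $S({\mathcal F}(X_1),{\mathcal F}(X_2))\subseteq S(Y_1,Y_2)+S(Z_1,Z_2)$.

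Next, the sum of two $d$-truncated ideals is itself a $d$-truncated ideal, so passing to the $d$-truncated ideals generated by each side gives
$$I_d({\mathcal F}(X_1),{\mathcal F}(X_2))\subseteq I_d(Y_1,Y_2)+I_d(Z_1,Z_2).$$
The hypothesis that ${\mathcal F}'$ and ${\mathcal F}''$ are very faithful functors means exactly that $\Hom_d(X_1,X_2)$ is contained in both $\Hom_d(Y_1,Y_2)$ and $\Hom_d(Z_1,Z_2)$, equivalently $I_d(Y_1,Y_2)\subseteq I_d(X_1,X_2)$ and $I_d(Z_1,Z_2)\subseteq I_d(X_1,X_2)$. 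Combining these inclusions yields the required containment. For the contravariant case I would instead set ${\mathcal F}(\phi):=\phi\circ\iota$ and run the identical computation with the roles of $X_1$ and $X_2$ swapped, since the very-faithful hypothesis on each ${\mathcal F}',{\mathcal F}''$ then supplies $I_d(Y_2,Y_1),I_d(Z_2,Z_1)\subseteq I_d(X_1,X_2)\circ\iota$ in the appropriate sense. I do not anticipate any step to be a genuine obstacle; the only point requiring care is the decomposition $(A_2\oplus B_2)^\perp=A_2^\perp\oplus B_2^\perp$ and the compatibility of $\mu_{V'\oplus V''}$ with the direct-sum projections.
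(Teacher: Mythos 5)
Your argument follows the same route as the paper's proof (decompose the space $S({\mathcal F}(X_1),{\mathcal F}(X_2))$ inside $S({\mathcal F}'(X_1),{\mathcal F}'(X_2))+S({\mathcal F}''(X_1),{\mathcal F}''(X_2))$, translate very faithfulness of ${\mathcal F}'$ and ${\mathcal F}''$ into containments of truncated ideals, and dualize), but one intermediate step is wrong as stated: the sum of two $d$-truncated ideals need not be a $d$-truncated ideal. For example, in $R=k[x,y]$ with the standard filtration and $d=2$, the spans of $x^2-y$ and of $x^2$ are each $2$-truncated (each meets $R_1$ only in $0$), yet their sum contains $y$, so its $2$-truncated closure contains $xy$, which does not lie in the sum. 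Consequently your containment $I_d({\mathcal F}(X_1),{\mathcal F}(X_2))\subseteq I_d(Y_1,Y_2)+I_d(Z_1,Z_2)$ is not justified: the $d$-truncated ideal generated by $S(Y_1,Y_2)+S(Z_1,Z_2)$ may be strictly larger than $I_d(Y_1,Y_2)+I_d(Z_1,Z_2)$, and you have no argument that it is not.

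The repair is immediate and is exactly the paper's ordering of the steps: invoke very faithfulness first, giving $S(Y_1,Y_2)\subseteq I_d(Y_1,Y_2)\subseteq I_d(X_1,X_2)$ and likewise $S(Z_1,Z_2)\subseteq I_d(X_1,X_2)$; hence $S({\mathcal F}(X_1),{\mathcal F}(X_2))\subseteq S(Y_1,Y_2)+S(Z_1,Z_2)\subseteq I_d(X_1,X_2)$, and since $I_d(X_1,X_2)$ is itself a $d$-truncated ideal, taking the $d$-truncated closure of the left-hand side yields $I_d({\mathcal F}(X_1),{\mathcal F}(X_2))\subseteq I_d(X_1,X_2)$ directly, which dualizes to $\Hom_d(X_1,X_2)\subseteq \Hom_d({\mathcal F}(X_1),{\mathcal F}(X_2))$, so that ${\mathcal F}(\phi)=\phi$ is indeed a functor. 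With that reordering your remaining points are sound and agree with the paper: the decomposition $(A_2\oplus B_2)^\perp=A_2^\perp\oplus B_2^\perp$ giving $S({\mathcal F}(X_1),{\mathcal F}(X_2))\subseteq S(Y_1,Y_2)+S(Z_1,Z_2)$ (the paper even records equality, but the inclusion suffices), the observation that identities and composition need no separate check because composition is inherited from $R_d^\star$, and the contravariant case handled by the same computation with $X_1,X_2$ interchanged and the antipode $\iota$ inserted.
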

\begin{proof}
Suppose that $X_1,X_2\in \Aff(V)$,
and let $X_1'={\mathcal F}'(X_1),X_2'={\mathcal F}'(X_2),X_1''={\mathcal F}''(X_1),X_2''={\mathcal F}''(X_2)$.
 It is straightforward to verify that 
$$S(X_1'\oplus X_1'',X_2'\oplus X_2'')=S(X_1',X_2')+S(X_1'',X_2'').
$$
We have
$$
S(X_1',X_2')\subseteq ((S(X_1,X_2)))_d.
$$
Similarly, we have
$$
S(X_1'',X_2'')\subseteq ((S(X_1,X_2)))_d,
$$
so
$$
((S(X_1'\oplus X_1'',X_2'\oplus X_2'')))_d\subseteq ((S(X_1,X_2)))_d.
$$
This implies that 
$$
\Hom_d(X_1,X_2)\subseteq \Hom_d(X_1'\oplus X_1'',X_2'\oplus X_2'')=\Hom_d({\mathcal F}(X_1),{\mathcal F}(X_2)).
$$
\end{proof}

\begin{definition}
Suppose that $X\subseteq V$, $X'\subseteq V'$ are affine subspace.
We define $X\otimes X'\subseteq V\otimes V'$ as the affine subspace
spanned by all $x\otimes x'$ with $x\in X$ and $x'\in X'$.
 Suppose we write $X=v+Z$ and $X'=v'+Z'$
where $v\in V$, $v'\in V'$ and $Z\subseteq X,Z'\subseteq X'$ are subspaces.
 then we have 
 $$X'\otimes X'=v\otimes v'+Z\otimes Z'+kv\otimes Z'+Z\otimes kv'.$$
  \end{definition}

\begin{lemma}\label{lem3.4}
Suppose that $\ell(V),\ell(V')+\ell(V'')\leq d$, and ${\mathcal F}':{\mathcal C}_d(V)\to {\mathcal C}_d(V')$
and ${\mathcal F}'':{\mathcal C}_d(V)\to {\mathcal C}_d(V'')$ are very faithful functors.
Assume that both are covariant (resp. contravariant).
Then there exists a very faithful covariant (respectively contravariant) functor ${\mathcal F}:\category_d(V)\to\category_d(V'\otimes V'')$
such that ${\mathcal F}(X)={\mathcal F}'(X)\otimes{\mathcal F}''(X)$ for all $X\in \Aff(V)$.
\end{lemma}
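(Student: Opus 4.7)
The plan is to follow the strategy of Lemma~\ref{lem3.3}: I will show $((S(\mathcal{F}(X_1),\mathcal{F}(X_2))))_d \subseteq I_d(X_1,X_2)$ for all $X_1,X_2 \in \Aff(V)$, which by dualization yields $\Hom_d(X_1,X_2) \subseteq \Hom_d(\mathcal{F}(X_1),\mathcal{F}(X_2))$ and therefore defines the required very faithful functor $\mathcal{F}$. Writing $X_i = v_i + Z_i$, $X_i' := \mathcal{F}'(X_i) = v_i' + Z_i'$, and $X_i'' := \mathcal{F}''(X_i) = v_i'' + Z_i''$, the tensor product $\mathcal{F}(X_i) = X_i' \otimes X_i''$ has the form $v_i' \otimes v_i'' + Y_i$ with $Y_i := Z_i' \otimes Z_i'' + kv_i' \otimes Z_i'' + Z_i' \otimes kv_i''$. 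Since $\mu$ is linear and $X_1' \otimes X_1''$ is the affine hull of $\{w' \otimes w'' : w'\in X_1', w''\in X_1''\}$, I can reduce to generators of the form $(h \otimes \id)(\mu(w' \otimes w'') - v_2' \otimes v_2'' \otimes 1)$ with $h \in Y_2^\perp$, $w' \in X_1'$, $w'' \in X_1''$.

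The comodule map on $V' \otimes V''$ is the convolution $\mu(w' \otimes w'') = \mu(w') \star \mu(w'')$, where $(a \otimes r) \star (b \otimes s) := (a \otimes b) \otimes rs$. The first key observation, provable by the same duality that defines $S$ itself, is the relative containment
$$\alpha := \mu(w') - v_2' \otimes 1 \in V' \otimes S(X_1',X_2') + Z_2' \otimes R_{\ell(V')},$$
and analogously $\beta := \mu(w'') - v_2'' \otimes 1 \in V'' \otimes S(X_1'',X_2'') + Z_2'' \otimes R_{\ell(V'')}$. Expanding $(v_2' \otimes 1 + \alpha) \star (v_2'' \otimes 1 + \beta)$ gives
$$\mu(w' \otimes w'') - v_2' \otimes v_2'' \otimes 1 \;=\; v_2' \otimes \beta \;+\; \alpha \otimes v_2'' \;+\; \alpha \star \beta.$$

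The heart of the argument is to apply $(h \otimes \id)$ termwise. Splitting $\alpha = \alpha_1 + \alpha_2$ with $\alpha_1 \in V' \otimes S(X_1',X_2')$ and $\alpha_2 \in Z_2' \otimes R_{\ell(V')}$, and similarly for $\beta$, the subterms $v_2' \otimes \beta_2$, $\alpha_2 \otimes v_2''$, and $\alpha_2 \star \beta_2$ are killed because $h$ annihilates $kv_2' \otimes Z_2''$, $Z_2' \otimes kv_2''$, and $Z_2' \otimes Z_2''$ respectively. Each remaining subterm exposes a factor in $S(X_1',X_2') \cap R_{\ell(V')}$ or in $S(X_1'',X_2'') \cap R_{\ell(V'')}$, multiplied by something in $R_{\ell(V'')}$ or $R_{\ell(V')}$; the degree hypothesis $\ell(V')+\ell(V'') \leq d$ then places the product inside $(S(X_1',X_2'))_d + (S(X_1'',X_2''))_d$. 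Since $\mathcal{F}'$ and $\mathcal{F}''$ are very faithful, $((S(X_1',X_2')))_d$ and $((S(X_1'',X_2'')))_d$ are already contained in $I_d(X_1,X_2)$, which closes the covariant case. The contravariant case is handled by the same decomposition after twisting by $\iota$, using $\iota(R_d) \subseteq R_d$ and the fact that a contravariant very faithful functor gives the analogous inclusion after pre-composing with $\iota$. The main technical obstacle is the case analysis on $\alpha \star \beta$: the $\alpha_2 \star \beta_2$ term would escape the ideal without the $Z_2' \otimes Z_2''$ summand of $Y_2$, and the $\alpha_1 \star \beta_1$ term would exceed degree $d$ without the hypothesis $\ell(V')+\ell(V'') \leq d$.
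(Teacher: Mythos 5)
Your proposal is correct and takes essentially the same route as the paper: the paper's proof rests on the single containment $S(X_1'\otimes X_1'',X_2'\otimes X_2'')\subseteq S(X_1',X_2')R_{d-e}+R_{e}S(X_1'',X_2'')$ (with $e=\ell(V')$), stated there as ``one can verify,'' and then concludes via truncated ideals and duality exactly as you do. Your contribution is simply to carry out that verification explicitly (the splitting $\alpha=\alpha_1+\alpha_2$, $\beta=\beta_1+\beta_2$ and the role of the summand $Z_2'\otimes Z_2''$), and your use of very faithfulness in the form $((S(X_i',X_j')))_d\subseteq I_d(X_i,X_j)$ matches the paper's intent.
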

\begin{proof}
Let $e=\ell(V')$.
 One can verify that
\begin{multline*}
S(X_1'\otimes X_1'',X_2'\otimes X_2'')\subseteq S(X_1',X_2')R_{d-e}+R_{e}S(X_1'',X_2'')\subseteq \\
\subseteq ((S(X_1,X_2)))_eR_{d-e}+R_e((S(X_1,X_2))_{d-e}\subseteq ((S(X_1,X_2))_{d}.
\end{multline*}
It follows that
$$
((S(X_1'\otimes X_1'',X_2'\otimes X_2'')))_{d}\subseteq ((S(X_1,X_2)))_{d}
$$
and therefore
$$
\Hom_{d}(X_1,X_2)\subseteq \Hom_{d}(X_1'\otimes X_1'',X_2'\otimes X_2'')=\Hom_d({\mathcal F}(X_1),{\mathcal F}(X_2))
$$
\end{proof}

\begin{definition}
If $X\subseteq V$ is an affine subspace, then we define
$$
X^+=\{f\in V^\star\mid f(x)=1\mbox{ for all $x\in X$}\}.
$$
\end{definition}
If $0\in X$, then $X^+=\emptyset$. If $0\not\in X$, then $X^{++}=X$.
\begin{lemma}\label{lem3.5}
Suppose that $\ell(V)\leq d$.
There exists a very faithful contravariant functor ${\mathcal D}:{\mathcal C}_d(V)\to {\mathcal C}_d(V^\star)$
such that
$$
{\mathcal D}(X)=X^+
$$
for all $X\in \Aff(V)$.
\end{lemma}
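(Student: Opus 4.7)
The construction is forced: very faithfulness requires $\mathcal{D}(\phi)=\phi\circ\iota$ on morphisms. So the content of the lemma is (i) well-definedness --- if $\phi\in\Hom_d(X_1,X_2)$ then $\phi\circ\iota\in\Hom_d(X_2^+,X_1^+)$ --- and (ii) functoriality. Unwinding the definition of $\Hom_d$, well-definedness is equivalent to the inclusion
$$
\iota(I_d(X_2^+,X_1^+))\subseteq I_d(X_1,X_2).
$$
Because $R=k[G]$ is commutative and $\iota(W)\subseteq W$, the antipode $\iota$ is a $k$-algebra automorphism of $R$ preserving the filtration $R_d=W^d$. A direct verification then shows that $\iota$ maps $d$-truncated ideals to $d$-truncated ideals, and more generally $\iota(((S))_d)=((\iota(S)))_d$. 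Hence it suffices to prove the generator-level inclusion $S(X_2^+,X_1^+)\subseteq\iota(S(X_1,X_2))$, together with the empty-set corner cases.

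To compute $S(X_2^+,X_1^+)$, I first unpack $X^+$: writing $X=v+Z$, one has $X^+=\{f\in Z^\perp:f(v)=1\}$, which is nonempty iff $0\notin X$, and then has direction $(kv+Z)^\perp\subseteq V^\star$. Under the identification $V^{\star\star}=V$, the annihilator of this direction is exactly $kv+Z$. Fixing $f_i\in X_i^+$, a spanning set for $S(X_2^+,X_1^+)$ is therefore
$$
(\tilde h\otimes\id)\circ\mu^\star(f)-f_1(\tilde h)\otimes 1,\qquad\tilde h\in kv_1+Z_1,\ f\in X_2^+.
$$
The key ingredient is the duality identity $(\tilde h\otimes\id)\circ\mu^\star(f)=(f\otimes\iota)\circ\mu(\tilde h)=\iota\bigl((f\otimes\id)\circ\mu(\tilde h)\bigr)$, established in the proof that $\ell(V^\star)=\ell(V)$. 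Writing $\tilde h=\lambda v_1+z$ with $z\in Z_1$, $\lambda\in k$, and using $f_1(v_1)=1$, $f_1(z)=0$, $f(v_2)=1$, $f\in Z_2^\perp$, the generator becomes $\iota$ applied to
$$
\lambda\bigl[(f\otimes\id)\mu(v_1)-f(v_2)\otimes 1\bigr]+\bigl[(f\otimes\id)\mu(v_1+z)-f(v_2)\otimes 1\bigr]-\bigl[(f\otimes\id)\mu(v_1)-f(v_2)\otimes 1\bigr],
$$
each bracket lying in $S(X_1,X_2)$ by taking $w=v_1$ and $w=v_1+z$ in the defining generators. This proves the inclusion. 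The empty-set cases $X_i=\emptyset$ and $0\in X_i$ are handled directly using $\emptyset^+=V^\star$, $X^+=\emptyset$ when $0\in X$, the explicit values $S(\emptyset,X)=\{0\}$ and $S(X,\emptyset)=\{1\}$, and Lemma~\ref{lem:zero_nonzero}.

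Functoriality is then standard Hopf-algebra bookkeeping. The identity morphism $\id_X$ is represented by $\sigma_e\in R_d^\star$, and $\mathcal{D}(\id_X)=\sigma_e\circ\iota=\sigma_e=\id_{X^+}$ since $e^{-1}=e$. For composition, the contravariance $\mathcal{D}(\phi\circ\psi)=\mathcal{D}(\psi)\circ\mathcal{D}(\phi)$ follows from the antipode-coproduct identity $\Delta\circ\iota=\tau\circ(\iota\otimes\iota)\circ\Delta$ (where $\tau$ is the swap), dualized at the level of $R_d^\star$ where both sides of composition are defined via $\overline\Delta$. I expect the main obstacle to be the bookkeeping in the inclusion $S(X_2^+,X_1^+)\subseteq\iota(S(X_1,X_2))$; once the dual-representation identity linking $\mu$ and $\mu^\star$ through $\iota$ is in hand, the remainder is routine.
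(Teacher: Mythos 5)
Your proof is correct and follows essentially the same route as the paper: both reduce to the generator-level inclusion $S(X_2^+,X_1^+)\subseteq\iota(S(X_1,X_2))$ via the identity $(\tilde h\otimes\id)\circ\mu^\star(f)=(f\otimes\iota)\circ\mu(\tilde h)$, with the same corner cases. Your additional bookkeeping (that $\iota$ is a filtered algebra automorphism so $\iota(((S))_d)=((\iota(S)))_d$, and the Hopf-algebra verification of identity and composition) only makes explicit steps the paper leaves implicit.
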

\begin{proof}
Suppose that $X_1,X_2\in \Aff(V)$.
The action of $G$ on $V$ is given by 
$$
\mu:V\to V\otimes R_d.
$$
The action of $G$ on $V^\star$ is given by
$$
\mu^\star:V^\star\to V^\star \otimes R_d
$$
Such that 
$$
(h\otimes \iota)\circ \mu(v)=(v\otimes \id)\circ \mu^\star(h)
$$
for all $h\in V^\star$, $v\in V=V^{\star\star}$.
Suppose that $X_1=v_1+Z_1,X_2=v_2+Z_2\in \Aff(V)$.
The case $0\in X_2$ is clear, because then ${\mathcal D}(X_2)=\emptyset$
and $\Hom_d({\mathcal D}(X_2),{\mathcal D}(X_1))=R_d^\star$.
The case where $0\in X_2$ and $0\not\in X_1$ is also clear, because
we have $\Hom_d(X_1,X_2)=0$.
So we may assume that $0\not\in X_1$ and $0\not\in X_2$.
Choose $u_1,u_2\in V^\star$ with $u_1(X_1)=\{1\}$, and $u_2(X_2)=\{1\}$.
Then we have ${\mathcal D}(X_1)=u_1+Y_1$ and ${\mathcal D}(X_2)=u_2+Y_2$,
where $Y_i=(kv_i+Z_i)^\perp$ for $i=1,2$.
The space $S({\mathcal D}(X_2),{\mathcal D}(X_1))$ is spanned by elements
of the form
\begin{equation}\label{eqfw}
(f\otimes \id)\circ \mu^\star(w)-f(u_1)\otimes 1=(w\otimes \iota)\circ \mu(f)-f(u_1)\otimes 1
\end{equation}
with $f\in Y_1^\perp=kv_1+Z_1$, and $w\in u_2+Y_2\subseteq Z_2^\perp$.
In fact we only need those $f$ for which $f\in X_1=v_1+Z_1$.
Then (\ref{eqfw}) is equal to
\begin{multline*}
(f\otimes \id)\circ \mu^\star(w)-1=(w\otimes \iota)\circ \mu(f)-1=(w\otimes \iota)\circ\mu(f)-w(v_1)=\\=
\iota((w\otimes\id)\circ\mu(f)-w(v_1))
\in \iota(S(X_1,X_2)).
\end{multline*}
From this follows that
$$
\Hom_d(X_1,X_2)\subseteq \iota^\star(\Hom_d({\mathcal D}(X_2),{\mathcal D}(X_1))).
$$
\end{proof}

\begin{definition}\label{def:functor}
We inductively define the notion of a $d$-constructible functor.
\begin{enumerate}
\item The constant functors ${\mathcal F},{\mathcal G},{\mathcal H},{\mathcal I}$ in Lemma~\ref{lem3.1}
and the duality functor ${\mathcal D}$ in Lemma~\ref{lem3.5} are $d$-constructible. The functor ${\mathcal F}$ 
associated to a $G$-equivariant linear map as
in Lemma~\ref{lem3.2} is $d$-constructible.
\item If ${\mathcal F}',{\mathcal F}''$ are as in Lemma~\ref{lem3.3} and they are $d$-constructible, then the very faithful functor ${\mathcal F}$
defined by ${\mathcal F}(X)={\mathcal F}'(X)\oplus {\mathcal F}''(X)$ is $d$-constructible.
\item If ${\mathcal F}',{\mathcal F}''$ are as in Lemma~\ref{lem3.4} and they are $d$-constructible, then the very faithful functor ${\mathcal F}$
defined by ${\mathcal F}(X)={\mathcal F}'(X)\otimes {\mathcal F}''(X)$ is $d$-constructible.
\item A composition of $d$-constructible functors is again $d$-constructible.
\end{enumerate}
\end{definition}

\begin{corollary}\label{cor:dim_equal}
If $X_1,X_2\in \Aff(V)$, $X_1\cong_d X_2$ and ${\mathcal F}:{\mathcal C}_d(V)\to {\mathcal C}_d(V')$
is a $d$-constructible functor. Then ${\mathcal F}(X_1)\cong_d{\mathcal F}(X_2)$.
In particular, we have
$$
\dim {\mathcal F}(X_1)=\dim {\mathcal F}(X_2).
$$
\end{corollary}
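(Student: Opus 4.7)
The plan is to observe that the conclusion is a purely formal consequence of two facts already in place: (a) a functor preserves isomorphisms, and (b) isomorphic objects in $\mathcal{C}_d(V')$ have the same dimension (the unnumbered corollary immediately preceding Lemma~\ref{lem:zero_nonzero}). So essentially the content of the statement is the verification that a $d$-constructible $\mathcal{F}$ is genuinely a functor, something already established for each building block in Lemmas \ref{lem3.1}--\ref{lem3.5} and propagated through Definition~\ref{def:functor}.

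More concretely, I would argue as follows. Since $X_1\cong_d X_2$, there exist morphisms $\phi\in \Hom_d(X_1,X_2)$ and $\psi\in \Hom_d(X_2,X_1)$ with $\psi\circ\phi=\id_{X_1}$ and $\phi\circ\psi=\id_{X_2}$. If $\mathcal{F}$ is covariant (built by compositions of the covariant constructions in Lemmas~\ref{lem3.1}--\ref{lem3.4} and Lemma~\ref{lem:make_functor}-type equivariant functors), then very faithfulness gives $\mathcal{F}(\phi)=\phi$ and $\mathcal{F}(\psi)=\psi$, and we only need to check that these elements still lie in the appropriate Hom-sets for $\mathcal{F}(X_1),\mathcal{F}(X_2)$. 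This is exactly the content of the inclusions $\Hom_d(X_1,X_2)\subseteq \Hom_d(\mathcal{F}(X_1),\mathcal{F}(X_2))$ established in each of Lemmas~\ref{lem3.1}--\ref{lem3.4}. Functoriality of composition (inherited from the fact that composition in $\mathcal{C}_d$ comes from the algebra structure on $R_d^\star$) guarantees $\mathcal{F}(\psi)\circ\mathcal{F}(\phi)=\mathcal{F}(\id_{X_1})=\id_{\mathcal{F}(X_1)}$ and symmetrically on the other side, so $\mathcal{F}(X_1)\cong_d\mathcal{F}(X_2)$.

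For the contravariant case (i.e.\ when the duality functor $\mathcal{D}$ of Lemma~\ref{lem3.5} appears an odd number of times in the construction), the same argument works after tracking signs: very faithfulness now gives $\mathcal{F}(\phi)=\phi\circ\iota\in \Hom_d(\mathcal{F}(X_2),\mathcal{F}(X_1))$, and the antipode $\iota$ is an anti-homomorphism, so $\mathcal{F}(\phi)\circ \mathcal{F}(\psi)=(\psi\circ\phi)\circ\iota=\id_{X_1}\circ\iota=\id_{\mathcal{F}(X_1)}$, and dually. Either way, the conclusion $\mathcal{F}(X_1)\cong_d\mathcal{F}(X_2)$ holds. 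The dimension equality then follows by applying the earlier corollary to the isomorphic pair $\mathcal{F}(X_1)\cong_d\mathcal{F}(X_2)$ in $\mathcal{C}_d(V')$.

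Formally, the only real obstacle is bookkeeping: one should probably do the induction on the construction of $\mathcal{F}$ in Definition~\ref{def:functor}, check that at each clause the object-level map really extends to a bona fide (co- or contravariant) functor which respects composition and identities, and observe that a routine induction on composition length handles clause (4). Each of these verifications is already hidden inside the proofs of Lemmas~\ref{lem3.1}--\ref{lem3.5}, so the corollary really is immediate once one grants those lemmas.
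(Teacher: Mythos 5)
Your argument is correct and matches the paper's intended reasoning: the paper gives no separate proof of this corollary, since it is exactly the formal consequence of the functoriality packaged in Lemmas~\ref{lem3.1}--\ref{lem3.5} and Definition~\ref{def:functor} (functors preserve isomorphisms) together with the earlier unnumbered corollary that $X_1\cong_d X_2$ implies $\dim X_1=\dim X_2$. Your extra bookkeeping for the contravariant case via the antipode $\iota$ is consistent with how the paper sets up very faithful contravariant functors, so nothing is missing.
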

\begin{lemma}\label{lem:dconstr}
Suppose that ${\mathcal F},{\mathcal F}':{\mathcal C}_d(V)\to {\mathcal C}_d(V')$ are $d$-constructible functors, 
either both covariant or both contravariant. Then there exists a $d$-constructible functor
${\mathcal G}:{\mathcal C}_d(V)\to {\mathcal C}_d(V')$ defined by ${\mathcal G}(X)={\mathcal F}(X)\cap {\mathcal F}'(X)$.
\end{lemma}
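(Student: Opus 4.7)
The plan is to follow the template of Lemmas~\ref{lem3.3}--\ref{lem3.5}: define $\mathcal{G}$ on objects by $\mathcal{G}(X) = \mathcal{F}(X) \cap \mathcal{F}'(X)$ and exhibit it as a very faithful functor by showing that every morphism $\phi \in \Hom_d(X_1, X_2)$ automatically also lies in $\Hom_d(\mathcal{G}(X_1), \mathcal{G}(X_2))$. This reduces to verifying $I_d(\mathcal{G}(X_1),\mathcal{G}(X_2)) \subseteq I_d(X_1, X_2)$. Once this is done, one treats $\mathcal{G}$ as a new basic constructible operation (parallel to duality in Lemma~\ref{lem3.5}, meaning an implicit extension of Definition~\ref{def:functor}). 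I will handle the covariant case; the contravariant case is identical after swapping $X_1 \leftrightarrow X_2$ and inserting the $\iota$-twist.

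The main computation concerns the case where both $\mathcal{G}(X_1)$ and $\mathcal{G}(X_2)$ are nonempty. Write $\mathcal{F}(X_i) = a_i + A_i$, $\mathcal{F}'(X_i) = b_i + B_i$, and pick $c_i \in \mathcal{F}(X_i) \cap \mathcal{F}'(X_i)$, so $\mathcal{G}(X_i) = c_i + (A_i \cap B_i)$. The key identity $(A_2 \cap B_2)^\perp = A_2^\perp + B_2^\perp$ allows any $f \in (A_2 \cap B_2)^\perp$ to be written as $f = g + h$ with $g \in A_2^\perp$ and $h \in B_2^\perp$. Since $c_2 - a_2 \in A_2$ gives $g(c_2) = g(a_2)$, and likewise $h(c_2) = h(b_2)$, the defining generator of $S(\mathcal{G}(X_1),\mathcal{G}(X_2))$ associated with $w \in \mathcal{G}(X_1)$ splits as
$$
(f\otimes\id)\mu(w) - f(c_2)\otimes 1 = \bigl((g\otimes\id)\mu(w) - g(a_2)\otimes 1\bigr) + \bigl((h\otimes\id)\mu(w) - h(b_2)\otimes 1\bigr),
$$
whose two summands live in $S(\mathcal{F}(X_1),\mathcal{F}(X_2))$ and $S(\mathcal{F}'(X_1),\mathcal{F}'(X_2))$ respectively. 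By the very-faithfulness of $\mathcal{F}$ and $\mathcal{F}'$ each of these $S$-spaces is contained in the $d$-truncated ideal $I_d(X_1, X_2)$, so their sum is too, and hence so is $((S(\mathcal{G}(X_1),\mathcal{G}(X_2))))_d = I_d(\mathcal{G}(X_1),\mathcal{G}(X_2))$.

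The principal obstacle is the edge-case handling. When $\mathcal{G}(X_1) = \emptyset$, the inclusion is trivial because $\Hom_d(\emptyset,\mathcal{G}(X_2)) = R_d^\star$. The delicate case is $\mathcal{G}(X_1) \neq \emptyset$ together with $\mathcal{G}(X_2) = \emptyset$, which forces $\Hom_d(X_1, X_2) = 0$. To argue this one takes $u \in \mathcal{G}(X_1)$ and $\phi \in \Hom_d(X_1, X_2)$: very-faithfulness of $\mathcal{F}, \mathcal{F}'$ places $\phi \cdot u$ in both $\phi(1)a_2 + A_2$ and $\phi(1)b_2 + B_2$, and the condition $a_2 - b_2 \notin A_2 + B_2$ (the algebraic expression of $\mathcal{F}(X_2) \cap \mathcal{F}'(X_2) = \emptyset$) forces $\phi(1) = 0$. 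Iterating this constraint across all products $\psi\phi$ with $\psi \in \Hom_d(X_2, X_2)$, or equivalently exploiting the $R_d^\star$-algebra structure on $V'$ transported back through $\mu$, should propagate the vanishing from the single coordinate $\phi(1)$ to all of $R_d$, yielding $\phi = 0$. Finally, the contravariant case reduces to the covariant one via the duality functor $\mathcal{D}$ from Lemma~\ref{lem3.5} applied to both $\mathcal{F}$ and $\mathcal{F}'$.
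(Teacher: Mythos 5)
Your generic-case computation (splitting $f\in(A_2\cap B_2)^\perp$ as $g+h$ with $g\in A_2^\perp$, $h\in B_2^\perp$ and landing in $S(\mathcal{F}(X_1),\mathcal{F}(X_2))+S(\mathcal{F}'(X_1),\mathcal{F}'(X_2))\subseteq I_d(X_1,X_2)$) is fine, but it proves the wrong statement. The lemma asserts that $\mathcal{G}$ is a \emph{$d$-constructible} functor, i.e.\ a member of the inductively defined class of Definition~\ref{def:functor}; you instead verify (partially) that $X\mapsto\mathcal{F}(X)\cap\mathcal{F}'(X)$ is a very faithful functor and then propose to ``treat $\mathcal{G}$ as a new basic constructible operation,'' i.e.\ to enlarge the definition. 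That concedes exactly the content of the lemma, which is that intersection is already expressible inside the existing class. The paper's proof is precisely such an expression: using the duality $\mathcal{D}$ of Lemma~\ref{lem3.5} one has $(Y_1^+ + Y_2^+)^+=Y_1\cap Y_2$ for affine subspaces not containing $0$, and to guarantee this one first shifts into $V'\oplus k$ via the inclusion $\mathcal{I}$ (appending the coordinate $1$) and projects back with $\mathcal{P}$ at the end; the sum of the two constructible functors is itself constructible by combining Lemma~\ref{lem3.3} with the addition map, which is an equivariant linear map as in Lemma~\ref{lem3.2}. Thus $\mathcal{G}(X)=\mathcal{P}\circ\mathcal{D}\bigl(\mathcal{D}\circ\mathcal{I}\circ\mathcal{F}(X)+\mathcal{D}\circ\mathcal{I}\circ\mathcal{F}'(X)\bigr)$ is a composite of generators, so constructibility, very-faithfulness, and all degenerate cases are inherited automatically. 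None of this appears in your argument, and without it the lemma as stated is not proved.

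There is also a genuine gap in your edge case. When $\mathcal{G}(X_1)\neq\emptyset$ and $\mathcal{G}(X_2)=\emptyset$ with $\mathcal{F}(X_2)$ and $\mathcal{F}'(X_2)$ both nonempty, very-faithfulness requires $\Hom_d(X_1,X_2)=0$, equivalently $I_d(X_1,X_2)=R_d$. Your argument only yields $\phi(1)=0$ for every $\phi\in\Hom_d(X_1,X_2)$, and the proposed propagation through products is vacuous: since $\Delta(1)=1\otimes 1$, evaluation at $1$ is multiplicative on $R_d^\star$, so $(\psi\phi)(1)=\psi(1)\phi(1)=0$ gives no information beyond what you already have, and nothing in the sketch forces $\phi$ to vanish on all of $R_d$. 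This vanishing is not a routine afterthought; it is exactly the point that the paper's composite construction handles for free, because there $\Hom_d(X_1,X_2)\subseteq\Hom_d(\mathcal{G}(X_1),\mathcal{G}(X_2))=0$ follows from very-faithfulness of the constituent functors. As written, your proof therefore has both a missing main idea (the explicit construction inside the class) and an unproved step (the empty-intersection case).
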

\begin{proof}
If $0\not\in X_1$ and $0\not\in X_2$, then we have $(X_1^+ +X_2^+)^+=X_1\cap X_2$.
In $V\oplus k$, we have
$$
((X_1\times \{1\})^+ +(X_2\times \{1\})^+)^+=X_1\cap X_2\times \{1\}.
$$
So if ${\mathcal I}:{\mathcal C}_d(V)\to {\mathcal C}_d(V\oplus k)$ is just the inclusion,
and ${\mathcal  P}:{\mathcal C}_d(V\oplus k)\to {\mathcal C}_d(V)$ is just the projection,
then we define ${\mathcal G}$ by
$$
{\mathcal G}(X)={\mathcal P}\circ{\mathcal D}({\mathcal D}\circ {\mathcal I}\circ {\mathcal F}(X)+{\mathcal D}\circ{\mathcal I}\circ{\mathcal F}'(X)).
$$
\end{proof}
\begin{lemma}\label{lem:make_functor}
Suppose that $f:V\to V'$ is a $d$-constructible equivariant. Then there exists a $d$-constructible functor
${\mathcal F}:{\mathcal C}_d(V)\to {\mathcal C}_d(V')$ with ${\mathcal F}(\{v\})=\{f(v)\}$ for all $v\in V$.
\end{lemma}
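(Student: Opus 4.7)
The plan is to proceed by induction on the construction of $f$ as a $d$-constructible equivariant in Definition~\ref{def:equiv}, producing in parallel a $d$-constructible functor $\mathcal{F}$ whose action on singletons agrees with $f$. The base case (clause (1)) is handled directly by Lemma~\ref{lem3.2}: a $G$-equivariant linear map $f:V\to V'$ with $\ell(V),\ell(V')\leq d$ lifts to a very faithful $d$-constructible functor $\mathcal{F}$ with $\mathcal{F}(X)=f(X)$, so in particular $\mathcal{F}(\{v\})=\{f(v)\}$. For compositions (clause (4)) one simply composes the functors supplied by the induction hypothesis, invoking Definition~\ref{def:functor}(4).

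For a linear combination $f=\lambda_1 f_1+\lambda_2 f_2$ (clause (2)), I would first apply Lemma~\ref{lem3.3} to the $d$-constructible functors $\mathcal{F}_1,\mathcal{F}_2$ supplied inductively, obtaining a direct-sum functor $\mathcal{F}_1\oplus\mathcal{F}_2:\mathcal{C}_d(V)\to\mathcal{C}_d(V'\oplus V')$, and then compose with the functor associated by Lemma~\ref{lem3.2} to the $G$-equivariant linear map $\sigma:V'\oplus V'\to V'$, $(w_1,w_2)\mapsto\lambda_1 w_1+\lambda_2 w_2$; on a singleton this yields $\{\lambda_1 f_1(v)+\lambda_2 f_2(v)\}$. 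For the bilinear map $V_1\oplus V_2\to V_1\otimes V_2$ (clause (3), with $\ell(V_1)+\ell(V_2)\leq d$), I would apply Lemma~\ref{lem3.2} to the two projections $\pi_i:V_1\oplus V_2\to V_i$ and then combine the resulting functors by Lemma~\ref{lem3.4}, setting $\mathcal{F}(X)=\mathcal{F}_{\pi_1}(X)\otimes\mathcal{F}_{\pi_2}(X)$. The hypothesis of Lemma~\ref{lem3.4} is exactly $\ell(V_1)+\ell(V_2)\leq d$, and on a singleton $\{(v_1,v_2)\}$ the formula preceding Lemma~\ref{lem3.4} for the tensor of affine subspaces collapses (because the directions $Z,Z'$ are zero) to $\{v_1\otimes v_2\}$, as required.

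The step most likely to be subtle is the bilinear case, since one must verify both that the functorial tensor product of Lemma~\ref{lem3.4} correctly reproduces $(v_1,v_2)\mapsto v_1\otimes v_2$ on singletons and that its $\ell$-hypothesis matches Definition~\ref{def:equiv}(3) on the nose; the analogous routine check $\ell(V'\oplus V')=\ell(V')\leq d$ is what makes the linear-combination step legal. With these four cases in hand the induction closes, yielding the desired $d$-constructible functor $\mathcal{F}$.
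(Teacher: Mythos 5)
Your proposal is correct, and it is precisely the argument the paper intends: the paper's own proof consists of the single remark that the lemma ``follows easily from the inductive definitions'' \ref{def:equiv} and \ref{def:functor}, and your case-by-case induction (Lemma~\ref{lem3.2} for the linear base case, Lemma~\ref{lem3.3} followed by the sum map for linear combinations, projections plus Lemma~\ref{lem3.4} for the tensor clause, and composition for compositions) is exactly the elaboration of that remark, with the $\ell$-hypotheses checked correctly. Nothing further is needed.
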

\begin{proof}
This follows easily from the inductive definitions~\ref{def:equiv} and \ref{def:functor}.
\end{proof}
\begin{proof}[Proof of Proposition~\ref{prop:equiv_separates}]
Suppose that $f:V\to V'$ is a $d$-constructible equivariant with $f(v_1)=0$ and $f(v_2)\neq 0$.
By Lemma~\ref{lem:make_functor} there is a $d$-constructible functor ${\mathcal F}:{\mathcal C}_d(V)\to {\mathcal C}_d(V')$ with
${\mathcal F}(\{v\})=\{f(v)\}$ for all $v\in V$. We have $\Hom_d(v_1,v_2)\subseteq \Hom_d(f(v_1),f(v_2))=0$ by Lemma~\ref{lem:zero_nonzero}.
\end{proof}

\begin{lemma}\label{lem:dimF}
Suppose that $v_1,v_2\in V$ and
$$
\dim {\mathcal F}(v_1)=\dim {\mathcal F}(v_2)
$$
for all $d$-constructible functors ${\mathcal F}$. 
Then $f(v_1)=0\Leftrightarrow f(v_2)=0$ for every $d$-constructible equivariant.
\end{lemma}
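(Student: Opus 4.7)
The plan is to reduce the statement to a dimension comparison by wrapping the given equivariant inside a $d$-constructible functor that converts the dichotomy $f(v)=0$ versus $f(v)\neq 0$ into a difference of dimensions. The naive attempt, simply turning $f$ into a functor $\mathcal{F}$ via Lemma~\ref{lem:make_functor}, does not suffice: since $\mathcal{F}(\{v\})=\{f(v)\}$ is always a singleton, its dimension is $0$ regardless of whether $f(v)$ vanishes. So I would compose with the duality functor $\mathcal{D}$ from Lemma~\ref{lem3.5}, which is sensitive precisely to whether $0$ lies in its argument.

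Concretely, let $f:V\to V'$ be a $d$-constructible equivariant. First, apply Lemma~\ref{lem:make_functor} to obtain a $d$-constructible functor $\mathcal{F}:\mathcal{C}_d(V)\to \mathcal{C}_d(V')$ with $\mathcal{F}(\{v\})=\{f(v)\}$ for all $v\in V$. Next, form the composition $\mathcal{G}=\mathcal{D}\circ\mathcal{F}:\mathcal{C}_d(V)\to\mathcal{C}_d(V'^\star)$, which is again $d$-constructible by clause (4) of Definition~\ref{def:functor} (note that $\ell(V'^\star)=\ell(V')\leq d$ so $\mathcal{D}$ is indeed defined on $\mathcal{C}_d(V')$). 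For every $v\in V$ we have
\[
\mathcal{G}(\{v\})=\{f(v)\}^+.
\]
By the observation following the definition of $X^+$, this set is empty precisely when $0\in\{f(v)\}$, i.e.\ when $f(v)=0$; and when $f(v)\neq 0$, the set $\{f(v)\}^+$ is the affine hyperplane $\{h\in V'^\star\mid h(f(v))=1\}$, of dimension $\dim V'-1$.

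Thus, using the convention $\dim\emptyset=-\infty$,
\[
\dim\mathcal{G}(v)=\begin{cases}-\infty & \text{if } f(v)=0,\\ \dim V'-1 & \text{if } f(v)\neq 0.\end{cases}
\]
The hypothesis applied to the $d$-constructible functor $\mathcal{G}$ gives $\dim\mathcal{G}(v_1)=\dim\mathcal{G}(v_2)$, and the displayed dichotomy forces $f(v_1)=0\Leftrightarrow f(v_2)=0$, as required.

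There is no real obstacle here once one sees the trick: the whole argument rests on the fact that $\mathcal{D}$ detects membership of $0$, so composing it with the functorial lift of $f$ converts the algebraic condition ``$f(v)=0$'' into the geometric condition ``$\mathcal{G}(v)=\emptyset$'', which is in turn detected by the dimension. The only thing one must verify is that all the functors used ($\mathcal{F}$ from Lemma~\ref{lem:make_functor}, $\mathcal{D}$ from Lemma~\ref{lem3.5}, and their composition via Definition~\ref{def:functor}(4)) are legitimately $d$-constructible, which is immediate.
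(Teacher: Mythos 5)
Your proof is correct, and the one point where it could go wrong — that the naive functorial lift of $f$ sends singletons to singletons and so sees no dimension difference — is exactly the point you identify and fix. The paper's own proof uses the same skeleton (lift $f$ to ${\mathcal F}$ via Lemma~\ref{lem:make_functor}, then post-compose with a gadget that turns vanishing of $f(v)$ into an emptiness/dimension dichotomy), but its gadget is different: it invokes Lemma~\ref{lem:dconstr} to form ${\mathcal F}'(X)={\mathcal F}(X)\cap\{0\}$, so that $\dim{\mathcal F}'(\{w\})$ is $0$ when $f(w)=0$ and $-\infty$ otherwise. Your choice of ${\mathcal G}={\mathcal D}\circ{\mathcal F}$, with the dichotomy $\dim V'-1$ versus $-\infty$, achieves the same end while bypassing the intersection lemma altogether — a slight economy, since Lemma~\ref{lem:dconstr} is itself proved by stringing together copies of ${\mathcal D}$; the only extra care your route needs is what you already supplied, namely $\ell(V'^\star)=\ell(V')\le d$ and the fact that Definition~\ref{def:functor}(4) permits composing with the contravariant ${\mathcal D}$, so the hypothesis of the lemma applies to the (contravariant) constructible functor ${\mathcal G}$.
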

\begin{proof}
Suppose that $f:V\to V'$ is a $d$-constructible equivariant. There exists a $d$-constructible
functor ${\mathcal F}:{\mathcal C}_d(V)\to {\mathcal C}_d(V')$ with
${\mathcal F}(\{w\})=\{f(w)\}$ for every $w\in V$ by Lemma~\ref{lem:make_functor}.
Define a $d$-constructible functor ${\mathcal F}'$ with ${\mathcal F}'(X)={\mathcal F}(X)\cap \{0\}$. 
Suppose that $w\in V$. If $f(w)=0$, then ${\mathcal F}'(\{w\})=\{0\}$ and $\dim {\mathcal F}'(\{w\})=0$.
If $f(w)\neq 0$, then ${\mathcal F}'(\{w\})=\emptyset$ and $\dim {\mathcal F}'(\{w\})=-\infty$.
\end{proof}

\section{The module isomorphism problem}\label{sec:module_iso}
\subsection{Reformulation of the module isomorphism problem}
 Suppose that $M$ and $N$ are (left) $n$-modules of the 
free associative algebra $T=k\langle x_1,\dots,x_r\rangle$, and we would like to test whether $M$ and $N$
are isomorphic. We can choose a basis in $M$ and identify
$M$ with $k^n$. the action of $x_i$ is given by a matrix $A_i$. Similarly we can identify $N$ with $k^n$.
The action of $x_i$ is given by a matrix $B_i$.
An isomorphism is an invertible linear map $C:M\to N$ such that
$CA_i=B_iC$ for all $i$. This is equivalent to $CA_iC^{-1}=B_i$ for all $i$.
Let $V=\Mat_{n,n}(k)^r$. Then $\GL_n(k)$ acts on $V$ by simultaneous conjugation. The following lemma
follows from the discussion above:
\begin{lemma}
The modules $M$ and $N$ are isomorphic if and only if $A=(A_1,\dots,A_r)$ and $B=(B_1,\dots,B_r)$ lie in the same $\GL_n(k)$-orbit.
\end{lemma}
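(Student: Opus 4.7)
The plan is to unpack the definitions on both sides and show that a module isomorphism is exactly the same data as an element $C \in \GL_n(k)$ conjugating $A$ to $B$.

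First I would fix the identifications. Having chosen bases so that $M \cong k^n$ with $x_i$ acting as $A_i$, and $N \cong k^n$ with $x_i$ acting as $B_i$, a $T$-linear map $\varphi: M \to N$ is in particular a $k$-linear map $k^n \to k^n$, so it is given by a matrix $C \in \Mat_{n,n}(k)$. The requirement that $\varphi$ commute with the $T$-action amounts (since $T$ is generated by $x_1,\dots,x_r$) to the single collection of conditions $C A_i = B_i C$ for $i=1,\dots,r$. Such a $\varphi$ is a module isomorphism iff it is invertible as a $k$-linear map, i.e.\ iff $C \in \GL_n(k)$, at which point the system $C A_i = B_i C$ is equivalent to $C A_i C^{-1} = B_i$ for every $i$.

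For the forward direction, suppose $M \cong N$ as $T$-modules, with isomorphism $\varphi$ corresponding to $C \in \GL_n(k)$ as above. Then $(C A_1 C^{-1},\dots, C A_r C^{-1}) = (B_1,\dots,B_r)$, which is exactly the statement that $C$ takes the tuple $A$ to the tuple $B$ under simultaneous conjugation, so $A$ and $B$ lie in the same $\GL_n(k)$-orbit of $V = \Mat_{n,n}(k)^r$. Conversely, if $C \in \GL_n(k)$ satisfies $C A_i C^{-1} = B_i$ for all $i$, then the linear map $\varphi: k^n \to k^n$ defined by $C$ is a $k$-linear isomorphism satisfying $\varphi \circ (A_i \cdot) = (B_i \cdot) \circ \varphi$, hence a $T$-module isomorphism $M \to N$.

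There is no real obstacle here; the statement is essentially a definition chase, and the only thing to watch is that the two formulations $C A_i = B_i C$ and $C A_i C^{-1} = B_i$ coincide precisely when $C$ is invertible, which is exactly the condition that separates a $T$-homomorphism from a $T$-isomorphism. Once this is noted, the equivalence with lying in a common $\GL_n(k)$-orbit under simultaneous conjugation on $\Mat_{n,n}(k)^r$ is immediate.
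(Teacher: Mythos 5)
Your proof is correct and is essentially the argument the paper gives: the paper also treats this as a definition chase, identifying a $T$-module isomorphism with an invertible $C$ satisfying $CA_i = B_iC$, i.e.\ $CA_iC^{-1}=B_i$, which is exactly simultaneous conjugation. Nothing is missing.
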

\begin{proposition}\label{prop:iso}
Let $\overline{k}$ be the algebraic closure of $k$.
The modules $M\otimes_{k}\otimes \overline{k}$ and $N\otimes_k\overline{k}$ are isomorphic if and only if $M\otimes_k\overline{k}$ and $N\otimes_k\overline{k}$ are isomorphic as $T\otimes_k\overline{k}$-modules.
In other words, $A$ and $B$ lie in the same $\GL_n(k)$-orbit if and only if they lie in the
same $\GL_n(\overline{k})$-orbit.
\end{proposition}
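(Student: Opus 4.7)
The plan is to recast the proposition as a linear-algebraic descent question. Set
$$H = \Hom_T(M,N) = \{C \in \Mat_{n,n}(k) : CA_i = B_iC \text{ for all } i = 1,\dots,r\};$$
this is a finite-dimensional $k$-subspace of $\Mat_{n,n}(k)$. Since the defining equations for $H$ are $k$-linear, base change commutes with the formation of intertwiners, so $H \otimes_k \overline{k}$ is the corresponding intertwiner space for the $T \otimes_k \overline{k}$-modules $M \otimes_k \overline{k}$ and $N \otimes_k \overline{k}$. For any field $K \supseteq k$, the tuples $A$ and $B$ lie in the same $\GL_n(K)$-orbit precisely when $H \otimes_k K$ contains a matrix invertible in $\Mat_{n,n}(K)$. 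The forward direction of the proposition is trivial because $\GL_n(k) \subseteq \GL_n(\overline{k})$, so the task is to show that if $H \otimes_k \overline{k}$ contains an invertible matrix, then so does $H$.

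Fix a $k$-basis $C_1,\dots,C_m$ of $H$ and consider
$$P(t_1,\dots,t_m) = \det(t_1C_1 + \cdots + t_mC_m) \in k[t_1,\dots,t_m].$$
An invertible element of $H \otimes_k \overline{k}$ has the form $\sum \lambda_i C_i$ with $\lambda_i \in \overline{k}$ and $P(\lambda_1,\dots,\lambda_m) \ne 0$, so $P$ is a nonzero polynomial in $k[t_1,\dots,t_m]$. If $k$ is infinite, any nonzero polynomial in finitely many variables takes a nonzero value at some $k$-point, producing an invertible $C \in H$ and hence a $k$-rational conjugation sending $A$ to $B$. This finishes the proof whenever $k$ is infinite.

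The main obstacle is the finite-field case (which is the case of most interest for the paper, cf.\ Theorem~\ref{theo:polytime}), since $P$ can vanish identically on $k^m$ while being nonzero as a polynomial. The standard remedy is the Noether--Deuring theorem, which I would deduce from the Krull--Schmidt theorem for finite-dimensional $T$-modules: decompose $M$ and $N$ into indecomposable $T$-submodules over $k$; both $M \otimes_k \overline{k}$ and $N \otimes_k \overline{k}$ decompose into indecomposables over $\overline{k}$, and the uniqueness of these decompositions together with the fact that isomorphism classes of $k$-indecomposables can be separated by the (multiset of) isomorphism classes of their $\overline{k}$-indecomposable constituents gives a multiplicity-preserving bijection between the summands of $M$ and $N$, hence $M \cong N$ as $T$-modules. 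Equivalently, one can view the obstruction as an element of $H^1(\operatorname{Gal}(\overline{k}/k), \operatorname{Aut}_T(M \otimes_k \overline{k}))$ and observe that this pointed set is trivial: $\operatorname{Aut}_T(M \otimes_k \overline{k})$ is the unit group of a finite-dimensional $\overline{k}$-algebra, its semisimple quotient is a product of general linear groups over division algebras, and Hilbert~90 together with Wedderburn's theorem on finite division rings forces the cohomology to vanish.
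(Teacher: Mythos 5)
Your overall route coincides with the paper's: the forward implication is immediate, and for infinite $k$ the paper argues exactly as you do, writing an isomorphism $C$ defined over a finite extension $L$ of $k$ as $C=\sum_j h_jC_j$ along a $k$-basis $h_1,\dots,h_r$ of $L$, observing that each component $C_j$ is itself an intertwiner, and specializing the nonzero polynomial $\det\big(\sum_j s_jC_j\big)$ at a $k$-point; your variant, which takes a $k$-basis of the full intertwiner space $H=\Hom_T(M,N)$ and uses that $H$ is a linear solution space and hence compatible with base change, is equivalent. The only real divergence is the finite-field case, which the paper simply delegates to \cite[\S5, Lemma 1]{KR} (in effect the Noether--Deuring theorem), whereas you attempt to prove it.

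There, your first sketch has a genuine circularity: the asserted ``fact'' that non-isomorphic indecomposable $T$-modules over $k$ are separated by the multisets of $\overline{k}$-indecomposable constituents of their base changes is precisely the Noether--Deuring statement for indecomposables, and it does not follow from Krull--Schmidt in the way you indicate (moreover, even granting it, matching the total multisets for $M\otimes_k\overline{k}$ and $N\otimes_k\overline{k}$ does not obviously force equal multiplicities of the $k$-indecomposable summands, since the constituent multisets of distinct $k$-indecomposables could a priori be linearly dependent). The standard Krull--Schmidt argument runs differently: descend the isomorphism to a finite extension $L/k$ of degree $m$, restrict scalars along $T\to T\otimes_kL$ so that $M^{m}\cong N^{m}$ as $T$-modules, and cancel by Krull--Schmidt. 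Your cohomological sketch can be completed, but as written it only treats the semisimple quotient of $\End_{T}(M\otimes_k\overline{k})$: you must also show the vanishing of $H^1$ for the kernel $1+\operatorname{rad}$ (filter by powers of the radical, which is defined over $k$ because finite fields are perfect, and apply additive Hilbert 90), and then combine the two pieces via the exact sequence of pointed sets; alternatively, for finite $k$ the whole vanishing statement is immediate from Lang's theorem, since the unit group of the finite-dimensional algebra $\End_T(M)$ is a connected algebraic group. With either of these repairs (or simply a citation of Noether--Deuring, parallel to the paper's citation of \cite{KR}), your argument is complete.
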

\begin{proof}
Suppose that $M\otimes_k\overline{k}$ and $N\otimes_k\overline{k}$ are isomorphic $T\otimes_k\overline{k}$-modules. Then there exists
an invertible matrix $C\in \GL_n(\overline{k})$ with entries in $\overline{k}$ such that $CA_i=B_iC$ for all $i$.
There exists a finite field extension $L$ of $k$ such that all entries of $C$ lie in $L$. It follows from \cite[\S5, Lemma 1]{KR} that $M$ and $N$ are isomorphic
$T$-modules if $k$ is a finite field.
Suppose that $k$ is infinite. 
Choose a basis $h_1,h_2,\dots,h_r$ of $L$ as a $k$-vector space. We can write
$$
C=\sum_j h_jC_j
$$
where $C_j$ is an $R$-module homomorphism from $M$ to $N$ for all $j$. Let $C(s_1,s_2,\dots,s_r)=\sum_{j=1}^r s_jC_j$
where $s_1,\dots,s_r$ are indeterminates. Since $C(h_1,\dots,h_r)$ is invertible, we have
$\det C(h_1,\dots,h_r)\neq 0$.
So $\det C(s_1,\dots,s_r)$ is not
the zero polynomial. Since $k$ is infinite, we can choose $a_1,\dots,a_r\in k$ such that $\det C(a_1,\dots,a_r)\neq 0$.
Then $C(a_1,\dots,a_r)$ is an isomorphism between $M$ and $N$.
\end{proof}

\begin{theorem}[See~\cite{CIK,BL}]\label{theo:module_polytime}
There exists an algorithm for determining whether two $n$-dimensional modules $M$ and $N$ are isomorphic
which requires only a polynomial number (polynomial in $n$)  of arithmetic operations in the field $k$.
\end{theorem}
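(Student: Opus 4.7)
The plan is to reduce the isomorphism test to polynomially many linear algebra operations together with standard polynomial-time subroutines from computational noncommutative algebra. First, I would compute the $k$-vector space $H := \Hom_T(M,N) = \{C \in \Mat_{n,n}(k) \mid CA_i = B_iC \text{ for all } i\}$ by Gaussian elimination on the $rn^2$ linear equations in the $n^2$ entries of $C$; this takes polynomially many arithmetic operations in $k$. In the same way one computes the endomorphism algebras $E := \End_T(M)$ and $F := \End_T(N)$, which are associative subalgebras of $\Mat_{n,n}(k)$, and recognizes $H$ as an $(F,E)$-bimodule.

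By definition, $M \cong N$ if and only if $H$ contains an invertible matrix. Deciding in general whether a given subspace of $\Mat_{n,n}(k)$ contains an invertible element (Edmonds' problem) is not known to be in deterministic polynomial time. The point is that here the bimodule structure of $H$ over the endomorphism algebras provides enough extra structure to circumvent this bottleneck.

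The core step, following \cite{CIK,BL}, is to exploit the Wedderburn structure of the endomorphism algebras. One computes the Jacobson radicals $J(E)$ and $J(F)$ via the polynomial-time algorithm of Friedl--R\'onyai, passes to the semisimple quotients $\overline{E} = E/J(E)$ and $\overline{F} = F/J(F)$, and then finds the primitive central idempotents of each, decomposing $\overline{E}$ and $\overline{F}$ as direct products of simple algebras. Lifting these idempotents back modulo the radical yields a decomposition of $M$ and $N$ into indecomposable summands. Two modules are isomorphic precisely when their indecomposable summands match up in type and multiplicity, which is checkable in polynomial time from the Wedderburn data; a matching then assembles into an invertible element of $H$.

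The main obstacle in making every step polynomial is the Wedderburn decomposition over a general field $k$: identifying the simple factors of $\overline{E}$ requires locating primitive central idempotents, and over $\F_p$ or a number field one additionally needs explicit splittings of central simple algebras, all of which are handled by R\'onyai's polynomial-time algorithms. Proposition~\ref{prop:iso} ensures that working in a small algebraic extension of $k$ does not change the final answer, so the algorithm can descend any isomorphism found over an extension back to $k$ without affecting the overall count of $k$-arithmetic operations.
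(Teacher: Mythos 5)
The paper itself gives no proof of this theorem; it is imported from \cite{CIK,BL}, so your sketch has to be measured against those algorithms, and it departs from them at exactly the step that carries all the difficulty. Your first step (computing $\Hom_T(M,N)$, $\End_T(M)$, $\End_T(N)$ by solving linear systems) and your use of Proposition~\ref{prop:iso} to descend from a field extension are fine. The gap is the ``core step'': computing the explicit Wedderburn decomposition of the semisimple quotients (primitive central idempotents, splittings of the simple factors) and then decomposing $M$ and $N$ into indecomposables cannot be done with a polynomial number of arithmetic operations in $k$. Already for $r=1$, with $A_1$ the companion matrix of a squarefree polynomial $f$, producing the decomposition of $M$ into indecomposables is equivalent to factoring $f$ over $k$: polynomial factorization is not an arithmetic-operations procedure over a general field, over $\F_q$ it is not known to be achievable deterministically in time polynomial in $\log q$, and the explicit splittings of central simple algebras that you attribute to R\'onyai's polynomial-time algorithms are in fact known (by R\'onyai) to be at least as hard as factoring integers over $\Q$, and to require randomness or GRH over finite fields. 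This is not a pedantic objection in the context of this paper: Theorem~\ref{theo:polytime} invokes the present theorem over $\F_p$ with $\log p$ growing polynomially, so a factorization-based routine would not deliver the deterministic polynomial-time bound that is being used.

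The cited algorithms are engineered precisely to avoid this bottleneck: Chistov--Ivanyos--Karpinski and Brooksbank--Luks never decompose $M$ and $N$ into indecomposables and never compute a Wedderburn decomposition. They decide directly whether the space $H=\Hom_T(M,N)$ contains an invertible element, using only linear-algebraic subroutines (rank computations, Fitting-type splittings, idempotent lifting modulo the radical of the endomorphism ring), and exploiting the $(\End_T(N),\End_T(M))$-bimodule structure of $H$ to show that such a search for an element of maximal rank can be carried out, and certified, with polynomially many field operations; this is exactly how they sidestep the general Edmonds-type problem you correctly flag. As written, your proposal proves a weaker statement than the theorem claims --- isomorphism testing given a factorization oracle for $k[x]$, or a randomized test over finite fields --- and to repair it you would have to replace the Wedderburn/indecomposable-matching step by a factorization-free construction of a unimodular element of $H$ along the lines of \cite{CIK,BL}.
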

If $k$ is a fixed finite field, then this algorithm runs in polynomial time. 
Even if $k$ is not fixed, if $k=\F_q$ and $\log q$ grows polynomially, then the algorithm still runs in polynomial time.

\subsection{The isomorphism problem in $k$-categories}\label{sec:cat_iso}
A category ${\mathcal C}$ is a $k$-category, if $\Hom_{\mathcal C}(M,N)$ is a vector space
for every two objects $M$ and $N$, and the composition map
$$
\Hom_{\category}(M,N)\times \Hom_{\category}(N,P)\to \Hom_{\category}(M,P)
$$
is $k$-bilinear for all objects $M,N,P$.
Assume we have any $k$-category $\category$ with finite dimensional $\Hom$-spaces.
Suppose that $M$ and $N$ are two isomorphic objects in $\category$, and let
$T=\Hom_\category(N,N)$. Then $T$ and  $\Hom_\category(M,N)$ are isomorphic as left $T$-modules.
\begin{lemma}\label{lem:varphi_iso}
Suppose that $M$ and $N$ are isomorphic, and
$\psi:T\to \Hom_\category(M,N)$ is an isomorphism of $T$-modules.
Then $\varphi=\psi(1)$ is an isomorphism between $M$ and $N$.
\end{lemma}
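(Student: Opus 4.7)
The plan is to exploit the hypothesis that $M$ and $N$ are already isomorphic in order to reduce the statement to a simple fact about $T$-module automorphisms of $T$ itself: namely, that they are precisely right multiplication by units of $T$.

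First I would fix an arbitrary isomorphism $\alpha:M\to N$ in $\category$, which exists by hypothesis, and verify that the map
$$
\alpha_\ast:T\longrightarrow \Hom_\category(M,N),\qquad t\longmapsto t\circ\alpha,
$$
is an isomorphism of left $T$-modules. Left $T$-linearity is immediate from the associativity of composition, while bijectivity follows by writing down the inverse $\beta\mapsto \beta\circ\alpha^{-1}$. Thus every $T$-linear isomorphism from $T$ to $\Hom_\category(M,N)$ gives, after composition with $\alpha_\ast^{-1}$, a $T$-linear automorphism of $T$.

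Next I would apply this to the given $\psi$: the composite $\theta:=\alpha_\ast^{-1}\circ\psi:T\to T$ is an automorphism of the left regular $T$-module. Since such an automorphism is determined by its value on $1$, and since for any $t\in T$
$$
\theta(t)=\theta(t\cdot 1)=t\cdot\theta(1),
$$
the map $\theta$ is right multiplication by the element $u:=\theta(1)\in T$. For $\theta$ to be invertible, $u$ must be a two-sided unit of $T$, i.e.\ an automorphism of $N$ in $\category$. Unwinding the definitions yields
$$
\varphi=\psi(1)=\alpha_\ast(\theta(1))=\alpha_\ast(u)=u\circ\alpha.
$$
Since $\alpha:M\to N$ is an isomorphism and $u:N\to N$ is an automorphism of $N$, their composition $\varphi=u\circ\alpha$ is an isomorphism $M\to N$, which is exactly the required conclusion.

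There is really no serious obstacle here; the only point requiring a moment of care is the observation that a left $T$-module automorphism of $T$ must be right multiplication by an invertible element, which is standard and follows from $T$-linearity plus the existence of an inverse. Everything else is a formal manipulation in the $k$-category $\category$.
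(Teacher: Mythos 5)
Your proof is correct, but it follows a genuinely different route from the paper's. The paper never identifies $\Hom_\category(M,N)$ with $T$: it observes that $\varphi=\psi(1)$ generates $\Hom_\category(M,N)$ as a $T$-module, so a chosen isomorphism $\gamma:M\to N$ can be written $\gamma=\tau\varphi$ with $\tau\in T$, which gives $\varphi$ a left inverse; it then upgrades this to a two-sided inverse by a dimension count, namely that $\lambda\mapsto\varphi\lambda$ is an injective map $\Hom_\category(N,M)\to\Hom_\category(N,N)$ between spaces of the same finite dimension, hence surjective, so $\id_N$ is in its image. You instead transport $\psi$ along the $T$-module isomorphism $\alpha_\ast:t\mapsto t\circ\alpha$ to an automorphism $\theta=\alpha_\ast^{-1}\circ\psi$ of the left regular module $T$, invoke the standard fact that such an automorphism is right multiplication by a unit $u=\theta(1)$, and read off $\varphi=u\circ\alpha$. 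Both proofs use the hypothesis that $M\cong N$ in the same essential way (to produce $\alpha$, respectively $\gamma$), but your argument buys something: it does not use the standing assumption that the Hom-spaces are finite dimensional, which is exactly what the paper's surjectivity-by-dimension-count step needs, so your version works in any $k$-category (indeed any category) once $\Hom_\category(M,N)$ is regarded as a left module over $T=\Hom_\category(N,N)$. The only step worth writing out in full is the two-sidedness of $u$: writing $u'=\theta^{-1}(1)$, the identities $\theta^{-1}\circ\theta=\id_T$ and $\theta\circ\theta^{-1}=\id_T$ evaluated at $1$ give $uu'=1$ and $u'u=1$, so $u$ is invertible in $T$, i.e.\ an automorphism of $N$, and $\varphi=u\circ\alpha$ is an isomorphism as you conclude.
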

\begin{proof}
Since $1$ generates $T$ as an $T$-module, $\varphi=\psi(1)$ generates $\Hom_\category(M,N)$ as an $T$-module.
Suppose that $\gamma:M\to N$ is an isomorphism. Since $\gamma\in T\varphi$, there exists $\tau\in \Hom_\category(N,N)=T$
such that $\gamma=\tau\varphi$. So $\varphi$ has a left inverse.
The map
$$
\Phi:\Hom_\category(N,M)\to \Hom_{\category}(N,N).
$$
defined by $\Phi(\lambda)=\varphi\lambda$.
is injective because $\varphi$ has a left inverse.
Since $\dim \Hom_\category(N,M)=\dim \Hom_\category(N,N)<\infty$
we have that $\Phi$ is surjective. Therefore $\id_N$ lies in the image of $\Phi$.
This implies that $\varphi$ has a right inverse as well.
\end{proof}

To test whether any two objects $M$, $N$ are isomorphic, we can proceed as follows.
\begin{enumerate}
\item  First test whether $T$ and $\Hom_\category(M,N)$ are isomorphic as $T$-modules. 
If they are not isomorphic, then $M$ and $N$ are not isomorphic. Otherwise let $\psi:T\to \Hom_\category(M,N)$
be an isomorphism of $R$-modules.
\item Let $\varphi=\psi(1)$.
Test whether $\varphi$ is an isomorphism. This is easy, because testing whether $\varphi$ has a left and a right inverse
just boils down to a system of linear equations.
Now $M$ and $N$ are isomorphic if and only if $\varphi$ is an isomorphism.
\end{enumerate}

We can use this approach for the categories ${\mathcal C}_d(V)$. Note that
$$
\dim\Hom_d(X_1,X_2)\leq \dim R_d
$$
for all $d$ because $\Hom_d(X_1,X_2)$ is a subspace of $R_d^\star$.
\begin{proof}[Proof of Theorem~\ref{theo:polytime}]
We have reduced the isomorphism problem in ${\mathcal C}_d(V)$ to the isomorphism problem of modules, 
and by Theorem~\ref{theo:module_polytime} the isomorphism problem of modules can be solved in a polynomial number
of arithmetic operations in the field $k$.
\end{proof}
 
Let $\overline{k}$ be the algebraic closure of $k$. We construct a new category ${\mathcal C}\otimes_k \overline{k}$,
where the objects are the same as the objects of ${\mathcal C}$, but
$$
\Hom_{{\mathcal C}\otimes_k \overline{k}}(M,N)=\Hom_{\mathcal C}(M,N)\otimes_k\overline{k}.
$$
\begin{proposition}\label{prop:kbar}
Suppose that $M,N$ are objects in ${\mathcal C}$. If $M\otimes_k\overline{k},N\otimes_k\overline{k}$ are isomorphic in ${\mathcal C}\otimes_k\overline{k}$,
then they are isomorphic in ${\mathcal C}$.
\end{proposition}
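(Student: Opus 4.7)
The plan is to reduce the statement to the descent of module isomorphisms (Proposition~\ref{prop:iso}) via Lemma~\ref{lem:varphi_iso}. Set $T=\Hom_\category(N,N)$, a finite-dimensional $k$-algebra; by construction
$$T\otimes_k\overline{k}=\Hom_{\category\otimes_k\overline{k}}(N\otimes_k\overline{k},\,N\otimes_k\overline{k}).$$
Assuming $M\otimes_k\overline{k}\cong N\otimes_k\overline{k}$ in $\category\otimes_k\overline{k}$, pick an isomorphism $\gamma$ realizing this. Then the map $f\mapsto f\circ\gamma^{-1}$ is an isomorphism of left $T\otimes_k\overline{k}$-modules from $\Hom_\category(M,N)\otimes_k\overline{k}$ to $T\otimes_k\overline{k}$.

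Next I would descend this module isomorphism to $k$. Since $T$ is a finitely generated $k$-algebra, fix a surjection from some free associative algebra $k\langle x_1,\dots,x_r\rangle$ onto $T$; through this surjection, $T$-modules are the same as modules over the free algebra on which the kernel acts trivially, and isomorphisms of $T$-modules agree with isomorphisms of the corresponding free-algebra modules. Applying Proposition~\ref{prop:iso} to the pair $\Hom_\category(M,N)$ and $T$ as left $T$-modules yields a $T$-module isomorphism $\psi:T\to \Hom_\category(M,N)$ defined over $k$. Set $\varphi:=\psi(1)\in \Hom_\category(M,N)$.

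It remains to show that $\varphi$ is a categorical isomorphism in $\category$. Tensoring $\psi$ up to $\overline{k}$ gives a $T\otimes_k\overline{k}$-module isomorphism carrying $1$ to $\varphi\otimes 1$, so by Lemma~\ref{lem:varphi_iso} applied inside $\category\otimes_k\overline{k}$ (whose hypothesis $M\otimes_k\overline{k}\cong N\otimes_k\overline{k}$ is exactly what was assumed) the morphism $\varphi\otimes 1$ is an isomorphism between $M\otimes_k\overline{k}$ and $N\otimes_k\overline{k}$. Hence the two $k$-linear systems
$$\exists\,\eta\in\Hom_\category(N,M):\ \eta\circ\varphi=\id_M,\qquad \exists\,\eta'\in\Hom_\category(N,M):\ \varphi\circ\eta'=\id_N$$
become solvable after base change to $\overline{k}$. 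Each is a linear system with coefficients in $k$, and solvability of such systems descends from $\overline{k}$ to $k$ because it is detected by a rank condition on a $k$-matrix. Therefore $\varphi$ has both a left and a right inverse in $\category$, and so is an isomorphism.

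The main obstacle is the invocation of Proposition~\ref{prop:iso}: it is stated for modules over a free associative algebra, whereas here one needs descent for modules over the finite-dimensional algebra $T$. This extension is routine by pullback along a surjection from a free algebra onto $T$, but it ought to be spelled out. The other ingredients---Lemma~\ref{lem:varphi_iso} together with the standard persistence of $k$-linear solvability under field extension---are essentially formal.
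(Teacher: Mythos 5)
Your proposal is correct and follows essentially the same route as the paper: take $T=\Hom_{\mathcal C}(N,N)$, descend the $T\otimes_k\overline{k}$-module isomorphism between $T\otimes_k\overline{k}$ and $\Hom_{\mathcal C}(M,N)\otimes_k\overline{k}$ to $k$ via Proposition~\ref{prop:iso}, set $\varphi=\psi(1)$, and apply Lemma~\ref{lem:varphi_iso} in ${\mathcal C}\otimes_k\overline{k}$ to see that $\varphi\otimes 1$ is invertible; the only (cosmetic) difference is in the last step, where the paper extracts a $k$-rational one-sided inverse by expanding $(\varphi\otimes 1)^{-1}$ over a $k$-linearly independent set containing $1$, while you invoke the invariance of solvability of $k$-linear systems under field extension, which is the same rank argument. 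Your remark that Proposition~\ref{prop:iso} is stated for free-algebra modules and must be transported to $T$-modules is accurate and worth spelling out, since the paper performs this step silently.
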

\begin{proof}
Suppose $M$ and $N$ are objects in ${\mathcal C}$ which are isomorphic in ${\mathcal C}\otimes_k\overline{k}$.
Let $T=\Hom_{\category}(N,N)$. Then $T\otimes_k\overline{k}$ is isomorphic to 
$\Hom_{\mathcal C}(M,N)\otimes_k\overline{k}$ as a $T\otimes_k\overline{k}$-module.
From Proposition~\ref{prop:iso} follows that $T$ and $\Hom_{\category}(M,N)$ are isomorphic as $T$-modules.
Let $\psi:T\to \Hom_{\mathcal C}(M,N)$ be an isomorphism and define $\varphi=\psi(1)$.
Then $\psi$ extends to an isomorphism $\psi\otimes \id:T\otimes_k\overline{k}:T\otimes_k\overline{k}\to
\Hom_{\mathcal C}(M,N)\otimes_k\overline{k}$ of $T\otimes_k\overline{k}$-modules.
and $\varphi\otimes 1=\psi(1)$ is an isomorphism by Lemma~\ref{lem:varphi_iso}. We can write
$$
(\varphi\otimes 1)^{-1}=\sum_{i=1}^l \gamma_i\otimes a_i
$$
where $a_1,a_2,\dots,a_l\in \overline{k}$ are linearly independent over $k$ and $a_1=1$.
Then we have
$$
\id=(\varphi\otimes 1)\circ (\varphi\otimes 1)^{-1}=\sum_{i=1}^l (\varphi\gamma_i)\otimes a_i.
$$
It follows that $\varphi\gamma_i=\id$ for $i=1$ and $\varphi\gamma_i=0$ for $i>1$.
Therefore, $\varphi$ has a right inverse. Similarly $\varphi$ has a left inverse, so $\varphi$ is an isomorphism.
\end{proof}
\begin{proof}[Proof of Theorem~\ref{theo:general}]
The implication (i)$\Rightarrow$(ii) follows from Corollary~\ref{cor:dim_equal}.
It is easy to verify that the category ${\mathcal C}_d(V\otimes_k\overline{k})$ (working over the field $\overline{k}$)
is equal to ${\mathcal C}_d(V)\otimes_k\overline{k}$. 
Suppose that $X_1,X_2\in \Aff(V)$ are in the same $G(\overline{k})$-orbit, say
$g\cdot X_1=X_2$ for some $g\in G(\overline{k})$. 
We may view $g$ as an element of $R_d^*\otimes_k\overline{k}$ if we identify $g$ with
the function $R_d\otimes_k\overline{k}\to \overline{k}$ which is evaluation at $g$.
Then $g\in \Hom_d(X_1,X_2)\otimes_k\overline{k}$,
and $g^{-1}\in \Hom_d(X_2,X_1)\otimes_k\overline{k}$ is its inverse.
This shows that $X_1,X_2$ are isomorphic in ${\mathcal C}_d(V)\otimes_k\overline{k}$. By Proposition~\ref{prop:kbar},
we have that $X_1,X_2$ are isomorphic in ${\mathcal C}_d(V)$.
The implication (ii)$\Rightarrow$(iii) follows.
\end{proof}
\begin{proof}[Proof of Theorem~\ref{theo:equis}]
The implication (i)$\Rightarrow$(ii) follows from Lemma~\ref{lem:dimF}. The other implications
follow from Theorem~\ref{theo:general}.
\end{proof}
\begin{proof}[Proof of Theorem~\ref{theo:graph_implications}]
The implication (i)$\Rightarrow$(ii) is obvious because ${\mathscr C}_d$ contains ${\mathscr L}_d$.
The equivalence (ii)$\Leftrightarrow$(iii) is Theorem~\ref{theo:CFI}.
The implication (ii)$\Rightarrow$(iv) follows from Corollary~\ref{cor:orbit_equivar}. 
The implications (iv)$\Rightarrow$(v)$\Rightarrow$(vi)$\Rightarrow$(vii) follow from Theorem~\ref{theo:equis}.
The equivalence (vii)$\Leftrightarrow$(viii) is Lemma~\ref{lem:graph_orbit}.
\end{proof}

\subsection{The categories ${\mathcal C}_d(V)$ for the general linear group}
Let $G$ be the group $ \GL_n(k)$. Let $U=k^n$ be the standard $n$-dimensional representation.
We can identify $G$ with the variety 
$$
\{(C,D)\in \Hom(k^n,U)\times \Hom(U,k^n)\mid DC=I_n\}\subseteq U^n\times (U^\star)^n.
$$
Let $W$ be the subspace of $k[G]$ spanned by the constant functions,
and the functions induced by linear functions on $U^n\times (U^\star)^n$. 
So $W$ is isomorphic to $U^n\oplus (U^\star)^n\oplus k$ as a representation of $G$.
We have $\ell(U)=\ell(U^\star)=1$.
This  choice of $W$ gives us now a filtration of $R=k[G]$.
For an $n$-dimensional vector space $V$ every weakly decreasing sequence $\lambda=(\lambda_1,\dots,\lambda_n)\in \Z^n$ 
corresponds to an irreducible representation $S^\lambda(V)$ of $\GL(V)$.
If $\lambda_r>0$ and $\lambda_{r+1}\leq 0$ for some $r$, then we have that $S^\lambda(V)$
is a subrepresentation of
$$
U^{\otimes (\lambda_1+\cdots+\lambda_r)}\otimes (U^\star)^{\otimes (-\lambda_{r+1}-\cdots-\lambda_n)}.
$$
It follows that  $\ell(S^\lambda(V))\leq \sum_{i=1}^n |\lambda_i|$, where $|\cdot|$ denotes the absolute value.

Define
$$
V=\Mat_{n,n}(k)^r=\End(U)^r
$$
where $G$ acts on $V$ by simultaneous conjugation. We have $\ell(V)=2$. 

The remainder of the Section is dedicated to the prove of Proposition~\ref{prop:not_converse}.
Let $T=k\langle x_1,\dots,x_r\rangle$ be the free associative algebra with $r$ generators, and $M$  and $N$ be an $n$-dimensional
$R$-modules representated by $A=(A_1,\dots,A_r)\in V$ and
$B=(B_1,\dots,B_r)\in V$ respectively.

Let $\mbox{$T$--mod}$ be the category of finite dimensional left $R$-modules.
\begin{proposition}
There exists a functor ${\mathcal F}:{\mathcal C}_3(V)\to\mbox{\rm $T$--mod}$ such
that for every $n$-dimensional module $T$-module $M$ that is represented by $A=(A_1,\dots,A_r)$
we have ${\mathcal F}(A)\cong M$.
\end{proposition}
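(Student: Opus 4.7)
The plan is to define the functor concretely and then verify the one nontrivial compatibility using the equivariance of an evaluation map together with the truncated-ideal condition defining $\Hom_3$. On objects, set ${\mathcal F}(\{A\}) = M_A$ (the vector space $U$ with $x_i$ acting as $A_i$) for every $A \in V$, and set ${\mathcal F}(X) = 0$ for every non-singleton $X \in \Aff(V)$. On a morphism $\phi \in \Hom_3(\{A\}, \{B\}) \subseteq R_3^\star$ between singletons, set ${\mathcal F}(\phi) = \phi_U$, the action of $\phi$ on the representation $U$ (legitimate since $\ell(U) = 1 \le 3$); set ${\mathcal F}(\phi) = 0$ on every morphism involving a non-singleton. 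Because $\phi \mapsto \phi_U$ is the restriction to a subspace of the algebra homomorphism $R_3^\star \to \End(U)$ arising from the $R_3^\star$-module structure on $U$, it is $k$-linear and preserves identities and compositions; the non-singleton cases cause no issue since the relevant hom-spaces in $T$-mod vanish.

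The substantive step is to check that $\phi_U$ is a $T$-module map $M_A \to M_B$, i.e.\ that $\phi_U(A_i u) = B_i \phi_U(u)$ for every $u \in U$ and every $i$. Introduce the $G$-equivariant linear evaluation $E_i: V \otimes U \to U$, $A' \otimes u' \mapsto A'_i u'$, which is the composition of the $i$-th projection $V = \End(U)^r \to \End(U)$ with the canonical pairing $\End(U) \otimes U \to U$. Since $\ell(V \otimes U) \le \ell(V) + \ell(U) = 3$, the map $E_i$ intertwines the $R_3^\star$-actions on source and target, so
$$
\phi_U(A_i u) \;=\; \phi \cdot E_i(A \otimes u) \;=\; E_i\bigl(\phi \cdot (A \otimes u)\bigr).
$$
Writing $\mu(A) = \sum_a A_a \otimes r_a$ with $r_a \in R_2$, and $\mu(u) = \sum_b u_b \otimes s_b$ with $s_b \in R_1$, the comodule structure on the tensor product yields $\phi \cdot (A \otimes u) = \sum_{a,b} \phi(r_a s_b)\, A_a \otimes u_b$, whence $E_i(\phi \cdot (A \otimes u)) = \sum_b \bigl(\sum_a \phi(r_a s_b)(A_a)_i\bigr) u_b$.

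The final ingredient is the truncated-ideal structure. Since $S(\{A\}, \{B\}) \subseteq R_2$, the definition $(S)_3 = \sum_{e} (S \cap R_e) R_{3-e}$ implies $S(\{A\}, \{B\}) \cdot R_1 \subseteq I_3(\{A\}, \{B\})$. Hence for every $f \in V^\star$ and every $s \in R_1$, the element $\bigl((f \otimes \id)\mu(A) - f(B)\bigr) \cdot s$ is annihilated by $\phi$, which rearranges to $\sum_a f(A_a) \phi(r_a s) = f(B) \phi(s)$. Since $f$ was arbitrary, the $V$-valued identity $\sum_a \phi(r_a s) A_a = \phi(s) B$ holds for every $s \in R_1$. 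Setting $s = s_b$, using commutativity of $R = k[\GL_n]$ to replace $r_a s_b$ by $s_b r_a$, and taking $i$-th components, we obtain $\sum_a \phi(r_a s_b)(A_a)_i = \phi(s_b) B_i$; substituting into the earlier formula gives $\phi_U(A_i u) = B_i \phi_U(u)$, as required. The main obstacle is precisely this passage from $\phi$'s vanishing on an ideal in $R_3$ to the intertwining identity in $\End(U)$, and the argument genuinely uses the commutativity of the coordinate ring of the algebraic group.
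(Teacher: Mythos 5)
Your central computation is correct and is, in substance, the paper's own argument in coordinate-free form. The paper defines $\pi(\phi)=(\phi(c_{i,j}))$, which is exactly your $\phi_U$ (since $\mu_U(e_j)=\sum_i e_i\otimes c_{i,j}$), and obtains the intertwining relation from the matrix identity $CA_i-B_iC=(CA_iD-B_i)C-CA_i(DC-I)$: the entries of $CA_iD-B_i$ span $S(\{A\},\{B\})\subseteq R_2$ and may be multiplied by the degree-one entries of $C$ inside the $3$-truncation, while $DC-I$ vanishes on $G$. Your route through the equivariant evaluation $E_i:V\otimes U\to U$, the $R_3^\star$-module structures, and the inclusion $S(\{A\},\{B\})\cdot R_1\subseteq I_3(\{A\},\{B\})$ is precisely this identity written intrinsically, and your remarks about compatibility with composition (convolution in $R_3^\star$ maps to composition in $\End(U)$) and with identities (the counit acts as $\id_U$) are sound. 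So on the part of the category that the statement actually concerns, your proof and the paper's coincide.

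The one genuine flaw is your handling of non-singleton objects. Setting ${\mathcal F}(X)=0$ for non-singleton $X\in\Aff(V)$ and ${\mathcal F}=0$ on all morphisms touching such $X$ is not automatically functorial: if $\phi\in\Hom_3(\{A\},X)$ and $\psi\in\Hom_3(X,\{B\})$ with $X$ non-singleton, then ${\mathcal F}(\psi)\circ{\mathcal F}(\phi)=0$, so functoriality requires $\pi(\psi\ast\phi)=0$ for every such pair, and your justification (``the relevant hom-spaces in $T$--mod vanish'') does not address this at all. Concretely, if $A\in X$ then the counit $\sigma_e$ lies in $\Hom_3(\{A\},X)$ and $\psi\ast\sigma_e=\psi$, so your definition would force $\pi(\psi)=0$ for every $\psi\in\Hom_3(X,\{B\})$, which nothing in your argument establishes and which there is no reason to expect in general. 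The paper sidesteps the issue by defining ${\mathcal F}$ only on the elements of $V$ and the morphisms between them, i.e., on the full subcategory of singletons, which is all that the proposition's conclusion and the subsequent corollary use; you should either restrict to that subcategory as well, or genuinely prove that every composite factoring through a non-singleton object is killed by $\pi$.
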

\begin{proof}
Let $(C,D)\in G$.
We can write $C=(c_{i,j})$ and $D=(d_{i,j})$. Then
$I_3(A,B)$ is the $3$-truncated ideal generated by the entries of the matrices
$CD-I$, $DC-I$ and $CA_iD-B_i$ for $i=1,2,\dots,r$.
Then the entries of $CA_i-B_iC=(CA_iD-B_i)C-CA_i(DC-I)$ also lie in $I_3(A,B)$.
The coordinate functions $C=(c_{i,j})$ define a linear map $\pi:R^\star \to \Hom_k(k^n,k^n)$.
It follows that $\pi(\Hom_d(M,N))\subseteq \Hom_R(M,N)$.
So we define ${\mathcal F}(\phi)=\pi(\phi)$ for all $\phi\in \Hom_d(M,N)$.
\end{proof}
\begin{corollary}
The elements $A=(A_1,\dots,A_r)$, $B=(B_1,\dots,B_r)$ lie in the same orbit if and only if $A$ and $B$ are isomorphic to
${\mathcal C}_3(V)$.
\end{corollary}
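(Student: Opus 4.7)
The plan is to prove the two directions separately, using the functor ${\mathcal F}:{\mathcal C}_3(V)\to T\text{-mod}$ constructed in the preceding proposition together with the results about module isomorphism already recorded in Section~\ref{sec:module_iso}.

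For the ``only if'' direction, suppose that $A$ and $B$ lie in the same $\GL_n(\overline{k})$-orbit. Then property (4) in the list of defining properties of the categories ${\mathcal C}_d(V)$ (equivalently, the easy direction of Theorem~\ref{theo:general}, namely (iii)$\Rightarrow$(ii) in contrapositive form) gives $A\cong_d B$ for every $d$, and in particular $A\cong_3 B$. This direction uses nothing special about $d=3$.

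For the substantive ``if'' direction, suppose $A\cong_3 B$ in ${\mathcal C}_3(V)$. Apply the functor ${\mathcal F}$ from the preceding proposition: since any functor sends isomorphisms to isomorphisms, we obtain ${\mathcal F}(A)\cong {\mathcal F}(B)$ in $T$-mod. By the defining property of ${\mathcal F}$, these objects are isomorphic to $M$ and $N$ respectively, so $M\cong N$ as $T$-modules. The lemma at the start of Section~\ref{sec:module_iso} then says that $A$ and $B$ lie in the same $\GL_n(k)$-orbit, and Proposition~\ref{prop:iso} upgrades this to the same $\GL_n(\overline{k})$-orbit, which is the conclusion.

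There is no real obstacle here: the entire content of the corollary is packaged into the preceding proposition (which already verified that the relevant generators of $I_3(A,B)$ include the ``commutation'' entries of $CA_i-B_iC$, so that $\pi$ really lands in $\Hom_T(M,N)$) together with the earlier translation of module isomorphism into a $\GL_n$-orbit problem. The mild subtlety worth mentioning in the writeup is the passage from isomorphism in $T$-mod to conjugacy over the ground field $k$ rather than $\overline{k}$, which is exactly what Proposition~\ref{prop:iso} is designed to handle; this is what lets us conclude the $\GL_n(\overline{k})$-orbit statement directly from an isomorphism in ${\mathcal C}_3(V)$ (which is a priori a $k$-category).
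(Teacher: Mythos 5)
Your proof is correct and is essentially the argument the paper intends: the ``same orbit $\Rightarrow\cong_3$'' direction is the contrapositive of (ii)$\Rightarrow$(iii) in Theorem~\ref{theo:general}, and the converse applies the functor ${\mathcal F}$ of the preceding proposition to an isomorphism in ${\mathcal C}_3(V)$ to get $M\cong N$ in $T$-mod, then uses the lemma of Section~\ref{sec:module_iso} and Proposition~\ref{prop:iso} to translate back to a $\GL_n$-orbit statement. No gaps; your remark about the $k$ versus $\overline{k}$ issue is exactly the point Proposition~\ref{prop:iso} is there to settle.
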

%
%
%
Consider now the case were $r=1$.
As the following proposition shows, the size needed for a covariant to distinguish two orbits may
be excessively large:
\begin{proposition}\label{prop:counterexample}
Let 
$$
C=\begin{pmatrix}
0 & 1\\
0 & 0
\end{pmatrix},D=\begin{pmatrix}
0 & 0\\
0 & 0
\end{pmatrix}\in \Mat_{2,2}(\C).
$$
and define the block matrices
$$
A=\begin{pmatrix}
C & & & & \\
& C &&&\\
& & \ddots &&\\
& & & C &\\
&&&& C
\end{pmatrix}
\mbox{ and }
B=\begin{pmatrix}
C & & & & \\
& C &&&\\
& & \ddots &&\\
& & & C &\\
&&&& D
\end{pmatrix}
$$
 in $\Mat_{2n,2n}(\C)$. The group $\GL_{2n}(\C)$ acts on $\Mat_{2n,2n}(\C)$ by conjugation.
Then $A,B$ do not lie in the same $\GL_{2n}(\C)$ orbit.
If $\varphi:V\to V'$ is a covariant which distinguishes the orbits of $A$ and $B$ respectively,
then we have $\dim(V')\geq 3^n$ and $\ell(V')\geq 2n$.
\end{proposition}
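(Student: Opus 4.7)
My plan is to prove the proposition in three main steps: establishing orbit distinction, then using one-parameter degenerations to constrain weights, and finally deriving the two numerical bounds.

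For the orbit distinction, interpret $A, B$ as $\C[t]$-modules via Section~\ref{sec:module_iso} with $r=1$: $A$ corresponds to $M_A = (\C[t]/(t^2))^n$ and $B$ to $M_B = (\C[t]/(t^2))^{n-1} \oplus \C^2$, the $D$-block contributing two copies of the trivial $\C[t]$-module. These modules have different Jordan types, hence are non-isomorphic, and so $A, B$ lie in distinct $\GL_{2n}(\C)$-orbits.

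Now let $\varphi : V \to V'$ be a distinguishing covariant; by the definition of covariant, $V' = S^\lambda U$ is irreducible, where $U$ is the standard representation of $\GL_{2n}(\C)$ and $\lambda = (\lambda_1, \ldots, \lambda_{2n})$ is a dominant weight. Since $A$ is fixed by the scalar subgroup $\C^* \subset \GL_{2n}(\C)$, which acts on $S^\lambda U$ via $\alpha \mapsto \alpha^{|\lambda|}$, and $\varphi(A) \neq 0$, we must have $|\lambda| = \sum_i \lambda_i = 0$. The central tool is a pair of one-parameter subgroups per block: for each $j \in \{1, \ldots, n\}$, let $g_t^{(j)} = \mathrm{diag}(1, \ldots, t, 1, \ldots, 1)$ with $t$ in position $2j-1$, and $h_t^{(j)} = \mathrm{diag}(1, \ldots, 1, t^{-1}, 1, \ldots, 1)$ with $t^{-1}$ in position $2j$. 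A direct calculation shows that $g_t^{(j)} A (g_t^{(j)})^{-1}$ and $h_t^{(j)} A (h_t^{(j)})^{-1}$ agree with $A$ in every block except the $j$-th, where the block becomes $tC$; so both limits as $t \to 0$ lie in the $\GL_{2n}(\C)$-orbit of $B$. Since $\varphi$ vanishes on that orbit, equivariance gives $\lim_{t \to 0} g_t^{(j)} \varphi(A) = 0$ and $\lim_{t \to 0} h_t^{(j)} \varphi(A) = 0$. Decomposing $\varphi(A) = \sum c_\mu v_\mu$ into torus-weight vectors, $g_t^{(j)}$ acts on $v_\mu$ by $t^{\mu_{2j-1}}$ and $h_t^{(j)}$ by $t^{-\mu_{2j}}$; the vanishings therefore force $\mu_{2j-1} > 0$ and $\mu_{2j} < 0$, i.e., $\mu_{2j-1} \geq 1$ and $\mu_{2j} \leq -1$ (since weights are integers), for every $\mu$ in the support of $\varphi(A)$.

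The two bounds then follow. For $\ell(V')$, summing yields $\mu^+ := \sum_{\mu_i > 0} \mu_i \geq \sum_j \mu_{2j-1} \geq n$. Because $\mu$ lies in the convex hull of the Weyl orbit $W\cdot\lambda$ (a classical property of weights of irreducibles) and $\nu \mapsto \nu^+$ is convex and Weyl-invariant, $\lambda^+ \geq \mu^+ \geq n$, giving $\ell(V') = \sum_i |\lambda_i| = 2\lambda^+ \geq 2n$. For $\dim V'$, restrict $V' = S^\lambda U$ to the block-diagonal subgroup $(\SL_2)^n \subset \GL_{2n}(\C)$ (one $\SL_2$ acting on each $2 \times 2$ block). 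Under this restriction $V'$ decomposes as a direct sum of $(\SL_2)^n$-irreducibles $V_{m_1}^{(1)} \otimes \cdots \otimes V_{m_n}^{(n)}$, where $V_m^{(j)}$ is the $(m+1)$-dimensional irreducible representation of the $j$-th $\SL_2$. The $\SL_2^{(j)}$-torus weight of $v_\mu$ is $\mu_{2j-1} - \mu_{2j} \geq 2$, so any $(\SL_2)^n$-summand of $V'$ whose weight space at $\mu$ is nonzero must have $m_j \geq 2$ for all $j$. Since $c_\mu \neq 0$, at least one such summand appears in $V'$ with positive multiplicity, giving $\dim V' \geq \prod_j (m_j + 1) \geq 3^n$.

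The main conceptual obstacle is the choice of one-parameter subgroups: a naive symmetric degeneration $\mathrm{diag}(\ldots, t, t^{-1}, \ldots)$ per block gives only $\mu_{2j-1} > \mu_{2j}$ (difference $\geq 1$), yielding merely $\dim V' \geq 2^n$. The separate one-sided constraints $\mu_{2j-1} \geq 1$ and $\mu_{2j} \leq -1$ obtained from the asymmetric $g_t^{(j)}$ and $h_t^{(j)}$ are what force the extra factor needed to reach $3$ per block, and simultaneously yield the $\ell$-bound $\lambda^+ \geq n$ rather than $\lambda^+ \geq n/2$.
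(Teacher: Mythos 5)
Your argument is correct in substance, but it takes a genuinely different route from the paper's proof. The paper restricts $V'=S^\lambda(U)$ to $\GL(U_1)\times\cdots\times\GL(U_n)$, chooses an irreducible summand $Z_1\otimes\cdots\otimes Z_n$ on which $\varphi(A)$ has nonzero projection, rules out $\dim Z_i=1$ by degenerating the $i$-th block inside $\SL(U_i)$, upgrades $\dim Z_i\geq 2$ to $\dim Z_i\geq 3$ by noting that block scalars act trivially on the source, so $Z_i$ is a representation of $\PSL(U_i)$ and must have odd dimension, and finally deduces $\ell(V')\geq 2n$ from the Littlewood--Richardson inequalities $\lambda_1\geq\sum_i\mu^{(i)}_1$ and $\lambda_{2n}\leq\sum_i\mu^{(i)}_2$. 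You instead extract, from the asymmetric one-parameter subgroups that rescale a single diagonal entry, the pointwise weight constraints $\mu_{2j-1}\geq 1$ and $\mu_{2j}\leq -1$ on every torus weight in the support of $\varphi(A)$; this yields $m_j\geq 2$ in each $\SL_2$ factor directly (replacing the $\PSL_2$ parity argument) and yields the $\ell$-bound via weight convexity together with $\sum_i\lambda_i=0$ from the central $\C^\star$ (replacing Littlewood--Richardson). Both mechanisms are sound, and your single weight-support statement cleanly produces both bounds; note that you, like the paper, invoke the lower bound $\ell(S^\lambda(U))\geq\sum_i|\lambda_i|$ while the text only records the upper bound --- this is true (restrict matrix coefficients to the diagonal torus), and you state it at the same level of detail as the paper, so it is not a defect specific to your argument.

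One small omission: you silently assume $\varphi(A)\neq 0$ and $\varphi(B)=0$. The definition of ``distinguishes'' also allows $\varphi(A)=0\neq\varphi(B)$, and this case must be excluded before your weight analysis applies. The exclusion is immediate from material you already have: your degeneration with $j=n$ shows that $B$ itself is the limit as $t\to 0$ of conjugates of $A$, so $B$ lies in the closure of the $\GL_{2n}(\C)$-orbit of $A$, and $\varphi(A)=0$ would force $\varphi(B)=0$ by equivariance and continuity --- exactly the remark with which the paper opens its proof. With that one line added, your proof is complete.
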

\begin{proof}
Define $V=\End(U)$. Then $V\cong \Mat_{2n,2n}(\C)$, and $\GL(U)\cong \GL_{2n}(\C)$.
We can write $U=U_1\oplus \cdots\oplus U_n$ where $U_i\cong \C^2$.
We can view $\End(U_1)\oplus \cdots \oplus \End(U_n)$ as a subalgebra of $\End(U)$.
Now $A,B\in \End(U_1)\oplus \cdots \oplus \End(U_n)\subseteq \End(U)$ are given by
$A=(C,C,\dots,C)$ and $B=(C,C,\dots,C,D)$.

Suppose that $\varphi:V\to V'$ is a covariant, where $V'$ is an irreducible representation of $\GL(V)$.
If $\varphi(A)=0$, then $\varphi(B)=0$ because $B$ lies in the orbit closure of $A$.
Suppose that $\varphi(A)\neq 0$ and $\varphi(B)=0$.
As a representation of $\GL(U_1)\times \cdots \times \GL(U_n)$, $V'$ may not be irreducible.
Let $Z_1\otimes \cdots \otimes Z_n$ be an irreducible summand of $V'$
as a $\GL(U_1)\times \cdots \times \GL(U_n)$ representation,
such that $p(\varphi(A))\neq 0$, where $p$ is the $\GL(U_1)\times \cdots \times \GL(U_n)$-equivariant projection
 $V'\to Z_1\otimes \cdots \otimes Z_n$, and $Z_i$ is an irreducible representation of $\GL(U_i)$ for all $i$.
Let $q:\End(U_1)\oplus \cdots \oplus \End(U_n)\to Z_1\otimes \cdots\otimes Z_n$ be the restriction of $p\circ \varphi$.
We have $q(A)\neq 0$.
Suppose that $\dim Z_i=1$ for some $i$. 
Let $B'=(C,\dots,C,D,C,\dots,C)$. Then $B$ and $B'$ are in the same $\GL(V)$-orbit,
so $\varphi(B')=0$, and hence $q(B')=0$. Now $B'$ lies in the $\SL(U_i)$-closure of of $A$.
Since $q$ is $\SL(U_i)$-invariant, we get $q(A)=q(B')=0$.
Contradiction.
Hence $\dim Z_i\geq 2$. Since $Z_i$ must be an irreducible representation of $\PSL(U_i)$,
 we even have $\dim Z_i\geq 3$.
It follows that
$$
\dim V'\geq (\dim Z_1)\cdots (\dim Z_n)\geq 3^n.
$$
Let us write $V'=S^\lambda(U)$ for some $\lambda=(\lambda_1,\dots,\lambda_{2n})\in \Z^{2n}$
and $Z_i=S^{\mu^{(i)}}(U_i)$, where
$\mu^{(i)}=(\mu^{(i)}_1,\mu^{(i)}_2)$.

 From $\dim(Z_i)\geq 3$ follows that $\mu_1^{(i)}-\mu_2^{(i)}\geq 2$
for all $i$.
 The representation of $\GL(U)$ restricts to
$\GL(U_1)\times \cdots\times \GL(U_n)$ according to the Littlewood-Richardson rule.
We have the following inequalities:
$$
\lambda_1\geq \sum_{i}\mu_1^{(i)}
$$
and
$$
\lambda_{2n}\leq \sum_i\mu_2^{(i)}
$$
Taking the difference gives us
$$
|\lambda|=\sum_{i} |\lambda_i|\geq \lambda_1-\lambda_{2n}\geq \sum_{i=1}^n \mu_1^{(i)}-\mu_2^{(i)}\geq 2n.
$$
It follows that $\ell(V')\geq 2n$.
\end{proof}
\begin{remark}
Define $\varphi:\End(U)\to \End(\textstyle \bigwedge^n U)$ by
$$
\varphi(E)=E\wedge\cdots \wedge E.
$$
Then $\varphi(A)\neq 0$ and $\varphi(B)=0$. Note that $\End(\textstyle \bigwedge^n U)$ is not irreducible.
There exists an irreducible summand $W$ of $\End(\bigwedge^n U)$ such that $p(\varphi(C))\neq 0$,
where $p:\End(\bigwedge^n U)\to W$ is the projection. If we set $q=p\circ\varphi$,
then $q$ is a covariant that distinguishes the orbits of $A$ and $B$.
Note that $ \dim W\leq \dim \End(\bigwedge^n U)\leq 4^n$.
\end{remark}

\section{The Cai-F\"urer-Immerman examples}\label{sec:CFI}
Cai, F\"urer and Immerman showed that for every postive integer $d$
there exist non-isomorphic $2$-colored graphs $\Gamma$ and $\Gamma'$ such that
$\Gamma\sim_d\Gamma'$. To explain this result, we need to describe the construction
of Cai, F\"urer and Immerman which, given a graph $Q$, two nonisomorphic
2-colored graphs $\Gamma(Q)$ and $\Gamma'(Q)$ (see \cite[\S6]{CFI}).

Suppose that $Q=\langle X,R\rangle$ is a graph.  Let $E=\{\{x,y\}\mid (x,y)\in R\}$ be the set of edges in the graph.
For every vertex $x\in X$, we define $E(x)\subseteq E$ by $E(x)=\{e\in E\mid x\in e\}$.
So $E(x)$ is the set of edges which are incident with $x$.
We define a vertex set $X(Q)=X_1(Q)\cup X_2(Q)$, where
$$
X_1(Q)=\{c_{x,Y}\mid x\in X, Y\subseteq E(x),\mbox{ $|Y|$ is even}\},
$$
and
$$
X_2(Q)=\{a_{x,e}\mid x\in X,e\in E(x)\}\cup
\{b_{x,e}\mid x\in X,e\in E(x)\}.
$$
We define the edge set $E(Q)$ by
\begin{multline*}
E(Q)=\{\{a_{x,e},c_{x,Y}\}\mid x\in X,e\in Y\}\cup
\{\{b_{x,e},c_{x,Y}\}\mid x\in X, e\not\in Y\}\cup\\
\cup\{\{a_{x,e},a_{a,e}\}\mid x,y\in X,e\in E(x)\cap E(y)\}\cup
\{\{b_{x,e},b_{a,e}\}\mid x,y\in X,e\in E(x)\cap E(y)\}
\end{multline*}
We also define another edge set $E'(Q)$ as follows: 
We choose two special vertices $\widetilde{x},\widetilde{y}$ such that
$\widetilde{e}=\{\widetilde{x},\widetilde{y}\}\in E$ is an edge.
To obtain $E'(Q)$, 
remove $\{a_{\widetilde{x},\widetilde{e}},a_{\widetilde{y},\widetilde{e}}\}$ and $\{b_{\widetilde{x},\widetilde{e}},b_{\widetilde{y},\widetilde{e}}\}$ from $E(Q)$ and add
$\{a_{\widetilde{x},\widetilde{e}},b_{\widetilde{y},\widetilde{e}}\}$ and $\{a_{\widetilde{y},\widetilde{e}},b_{\widetilde{x},\widetilde{e}}\}$.
Let $R(Q)$ and $R'(Q)$ be the symmetric relations corresponding to the edge sets $E(Q)$ and $E(Q')$ respectively.
We now have two 2-colored graphs: $\Gamma(Q)=(X(Q),R(Q),X_1(Q),X_2(Q))$
and $\Gamma'(Q)=(X(Q),R'(Q),X_1(Q),X_2(Q))$.

The following proposition follows from Lemma 6.2 of \cite{CFI}. We will give a proof here, because a crucial lemma
is based on this proof.
\begin{proposition}\label{prop:not_isomorphic}
The graphs $\Gamma(Q)$ and $\Gamma'(Q)$ are not isomorphic.
\end{proposition}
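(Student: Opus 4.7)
The plan is to analyze an arbitrary isomorphism $\phi:\Gamma(Q)\to\Gamma'(Q)$ and derive a contradiction from a global $\F_2$-parity argument. First I would observe that $\phi$ preserves the color classes, so $\phi(X_1(Q)) = X_1(Q)$ and $\phi(X_2(Q)) = X_2(Q)$. By examining neighborhoods of the $c_{x,Y}$ vertices (they control which pairs $\{a_{x,e},b_{x,e}\}$ share a common ``$c$-neighbor''), one verifies that $\phi$ preserves the gadget decomposition: it induces an automorphism $\pi$ of $Q$ such that $\phi(\{a_{x,e},b_{x,e}\}) = \{a_{\pi(x),\pi(e)},b_{\pi(x),\pi(e)}\}$ for every $x\in X$ and $e\in E(x)$. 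Since $\pi$ lifts to an automorphism $\widehat\pi$ of $\Gamma(Q)$ acting by relabeling indices, we may replace $\phi$ by $\phi\circ\widehat\pi^{-1}$ and so assume that $\pi = \mathrm{id}$.

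Under this reduction, for each $x\in X$ and $e\in E(x)$ define $t_x(e)\in\{0,1\}$ by letting $t_x(e)=0$ when $\phi(a_{x,e})=a_{x,e}$ and $t_x(e)=1$ when $\phi(a_{x,e})=b_{x,e}$; set $S_x := \{e\in E(x) : t_x(e) = 1\}$. Tracing the adjacencies from the $c$-vertices shows that $\phi(c_{x,Y}) = c_{x,Y\triangle S_x}$, and since the image must again lie in $X_1(Q)$ we need $|Y\triangle S_x|$ even for every even $Y$, which forces $|S_x|$ to be even for every $x\in X$.

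The cross-gadget edges give a local constraint at each $e = \{x,y\}\in E$. For $e\neq\widetilde e$ the only cross-gadget adjacencies are $a$-to-$a$ and $b$-to-$b$ in both $\Gamma(Q)$ and $\Gamma'(Q)$, so $\phi$ must send $a_{x,e}\sim a_{y,e}$ to such an edge, forcing $t_x(e) = t_y(e)$. For the twisted edge $e = \widetilde e$ the cross-gadget adjacencies in $\Gamma'(Q)$ are $a$-to-$b$ and $b$-to-$a$, forcing $t_{\widetilde x}(\widetilde e)\ne t_{\widetilde y}(\widetilde e)$. Double counting now gives
\[
\sum_{x\in X}|S_x| \ =\ \sum_{e=\{x,y\}\in E}\bigl(t_x(e)+t_y(e)\bigr)\ \equiv\ 1\pmod 2,
\]
because every $e\neq\widetilde e$ contributes $0\pmod 2$ while $\widetilde e$ contributes $1$. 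But each $|S_x|$ is even, so the left-hand side is even, a contradiction.

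The main obstacle will be the first step: rigorously justifying that $\phi$ must respect the CFI gadget decomposition and hence descend to a genuine automorphism of $Q$. This relies on the fact that the $c_{x,Y}$ vertices are the only way to recover intrinsically, from the abstract colored graph, both the partition of the $a/b$-vertices into gadgets at each $x$ and the pairing $\{a_{x,e},b_{x,e}\}$ inside each gadget (the set of common $c$-neighbors of $a_{x,e}$ and $b_{x,e}$ has a characterizable size, while other pairs do not). Once this structural observation is in hand, the rest is the clean $\F_2$-cocycle calculation above, which is the heart of the CFI obstruction and will be reused in the proof of Theorem~\ref{theo:CFInoniso}.
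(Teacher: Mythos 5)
Your argument is correct in outline, but it takes a genuinely different route from the paper. The paper proves the proposition by pure $\F_2$-linear algebra: it extracts from the adjacency matrices the blocks $B=(A_{2,1}\ A_{2,2})$ and $B'=(A_{2,1}\ A_{2,2}')$ and shows (Lemma~\ref{lem:not_same_rank}) that $\rank(B)=3|E|+|X|-2$ while $\rank(B')=3|E|+|X|-1$; since a color-preserving isomorphism acts on these blocks by row and column permutations, the rank discrepancy rules out an isomorphism. This choice is deliberate: the rank invariant is exactly what the $3$-constructible functor in the proof of Theorem~\ref{theo:CFInoniso} computes, which is why the paper reproves the proposition this way rather than quoting \cite{CFI}. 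Your proof is instead the classical CFI twist-parity argument: reduce to an isomorphism inducing the identity on $Q$, record the twists $S_x$, show each $|S_x|$ is even from the $c$-vertex labels, and derive the parity contradiction from the single twisted edge $\widetilde e$. That parity computation is fine, and the gadget-rigidity step you flag as the main obstacle can indeed be made rigorous for the graphs in question (connected, $3$-regular $Q$) --- though your stated criterion is slightly off: $a_{x,e}$ and $b_{x,e}$ have \emph{no} common $c$-neighbours, while the other same-gadget pairs have $2^{\deg(x)-3}$ of them, so one should first recover the gadgets as the connected components of the $X_1$--$X_2$ incidence graph and then read off the pairing, and some care is needed at vertices of degree $\leq 2$ (both proofs implicitly use connectivity of $Q$). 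What each approach buys: yours is self-contained, combinatorial, and explains the ``odd twist'' obstruction conceptually; the paper's yields the quantitative $\F_2$-rank separation that is reused later, whereas your final remark that the parity calculation is what gets reused in Theorem~\ref{theo:CFInoniso} does not match the paper --- it is Lemma~\ref{lem:not_same_rank}, not the parity argument, that the constructible functor exploits.
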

\begin{proof}
Let $M$ be the adjacency matrix of $\Gamma(Q)$ with entries in the field $\F_2$. 
Since $X(Q)=X_1(Q)\cup X_2(Q)$,
$M$ has the following block form:
$$
M=
\begin{pmatrix}
 A_{1,1} & A_{1,2}\\
A_{2,1} & A_{2,2}
\end{pmatrix}
$$
where $A_{1,1}$, $A_{2,2}$  are symmetric and $A_{1,2}=A_{2,1}^t$. 
Similarly, let 
$$
M'=\begin{pmatrix}
 A_{1,1} & A_{1,2}\\
A_{2,1} & A_{2,2}'
\end{pmatrix}
$$
be the adjacency matrix for $\Gamma'(Q)$.

Let 
$$
B=\begin{pmatrix}
   A_{2,1} & A_{2,2}
  \end{pmatrix},
\mbox{ and }
B'=\begin{pmatrix}
    A_{2,1} & A_{2,2}'
   \end{pmatrix}
$$
The proposition now follows from the lemma below.
\end{proof}
\begin{lemma}\label{lem:not_same_rank}
 We have
$$
\rank(B)=3|E|+|X|-2
$$
and
$$
\rank(B')=3|E|+|X|-1.
$$
\end{lemma}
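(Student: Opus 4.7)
The plan is to compute both ranks by analyzing the left null space of each matrix over $\F_2$. Since each of $B,B'$ has $|X_2| = 4|E|$ rows, the task reduces to showing
$$
\dim N_B \;=\; |E|-|X|+2 \quad\text{and}\quad \dim N_{B'} \;=\; |E|-|X|+1,
$$
where $N_B,N_{B'}$ denote the respective left null spaces. The difference of exactly $1$ between these dimensions is what encodes the Cai--F\"urer--Immerman parity invariant and ultimately distinguishes $\Gamma(Q)$ from $\Gamma'(Q)$.

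A vector $v \in \F_2^{X_2}$ belongs to $N_B$ iff it is orthogonal to each column of $A_{2,1}$ (one per $c_{x,Y}$) and each column of $A_{2,2}$ (coming from the $a$-$a$ and $b$-$b$ gadget links). The first step is to extract the content of the $X_1$-constraints vertex by vertex: at a fixed $x \in X$, the $2^{|E(x)|-1}$ relations obtained by letting $Y$ range over even subsets of $E(x)$ are highly redundant, and a parity argument using the additive structure of even subsets under symmetric difference shows they collapse to a description of $v$ on the local cluster $\{a_{x,e},b_{x,e} : e \in E(x)\}$ in terms of a single local state $s_x \in \F_2$. The second step is to impose the constraints from the columns of $A_{2,2}$; after the local parameterization these become the edge-consistency system demanding $s_x = s_y$ for each $e=\{x,y\}\in E$. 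A cycle-space count on $Q$ then gives $\dim N_B = |E| - |X| + 2$, and hence $\rank(B) = 3|E|+|X|-2$.

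For $B'$ the entire argument runs identically, except at the twisted edge $\widetilde{e} = \{\widetilde{x},\widetilde{y}\}$: the affected columns of $A_{2,2}'$ now couple $a_{\widetilde{x},\widetilde{e}}$ with $b_{\widetilde{y},\widetilde{e}}$ rather than with $a_{\widetilde{y},\widetilde{e}}$, and symmetrically for the $b$-side. Tracing through the local parameterization, the consistency equation at $\widetilde{e}$ becomes $s_{\widetilde{x}} + s_{\widetilde{y}} = 1$ instead of $s_{\widetilde{x}} = s_{\widetilde{y}}$: an inhomogeneous modification that eliminates exactly one previously valid solution, so $\dim N_{B'} = \dim N_B - 1 = |E|-|X|+1$, yielding $\rank(B') = 3|E|+|X|-1$.

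The main obstacle will be the first step: correctly distilling the $2^{|E(x)|-1}$ locally-indexed $X_1$-relations at each vertex into a clean single-parameter residual space. This requires a careful combinatorial argument with the parity lattice of subsets of $E(x)$, and one must verify that the residual at each $x$ is \emph{exactly} one-dimensional --- neither zero nor higher. Once this local reduction is in hand, the edge-consistency step is a routine computation in the cycle space of $Q$, and the twist analysis for $B'$ reduces to a one-line modification of the edge equation at $\widetilde{e}$.
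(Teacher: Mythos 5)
Your overall strategy --- compute the left kernels of $B$ and $B'$ over $\F_2$ and use rank--nullity, so that the targets become $\dim N_B=|E|-|X|+2$ and $\dim N_{B'}=|E|-|X|+1$ --- is a legitimate dual of the paper's argument (the paper works with the column space directly, via the chain $\im(A_{2,2})\subseteq Z\subseteq \im(B)$ with successive quotients of dimensions $2|E|$, $|E|-1$ and $|X|-1$, resp.\ $|X|$ for $B'$, using connectivity of $Q$). But the step you yourself flag as the main obstacle, the vertex-local reduction, is incorrect as described, and your announced dimensions do not follow from it. The constraints at a fixed vertex $x$ coming from the columns $c_{x,Y}$ ($Y\subseteq E(x)$, $|Y|$ even) do not collapse the restriction of $v$ to the cluster $\{a_{x,e},b_{x,e}:e\in E(x)\}$ to a single bit: comparing $Y=\emptyset$ with $Y=\{e,f\}$ shows they are equivalent to $\sum_{e\in E(x)}v(b_{x,e})=0$ together with $v(a_{x,e})+v(b_{x,e})=t_x$ for a single bit $t_x$ independent of $e$, so the local solution space has dimension $\deg(x)$, not $1$. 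Those $\deg(x)-1$ residual $b$-degrees of freedom are essential: after the edge constraints, the $b$-values descend to a vector $\beta\in\F_2^E$ lying in the cycle space of $Q$ (dimension $|E|-|X|+1$), while the bits $t_x$ are forced to one common value $t$, and this is exactly what yields $|E|-|X|+2$. If the local residual really were one bit $s_x$ with edge equations $s_x=s_y$, connectivity would give $\dim N_B=1$, so the ``cycle-space count'' you invoke would have nothing to act on; your own numbers contradict your reduction.

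The account of the twist is also misplaced. Across $\widetilde e$ the twisted constraints $v(a_{\widetilde x,\widetilde e})=v(b_{\widetilde y,\widetilde e})$ and $v(b_{\widetilde x,\widetilde e})=v(a_{\widetilde y,\widetilde e})$ still force $t_{\widetilde x}=t_{\widetilde y}$; there is no local equation $s_{\widetilde x}+s_{\widetilde y}=1$. What changes is that the two $b$-values on $\widetilde e$ now differ by $t$, so in the sector $t=1$ the vertex parity conditions demand a $\beta\in\F_2^E$ whose boundary is the indicator of the single vertex $\widetilde y$, which is impossible because every boundary vector has even support sum. This global parity obstruction kills the one-dimensional $t$-direction and gives $\dim N_{B'}=|E|-|X|+1$ (also, losing one dimension over $\F_2$ halves the solution set rather than ``eliminating exactly one solution''). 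In short, the kernel approach can be completed and is essentially equivalent to the paper's image computation, but the central local analysis and the mechanism at the twisted edge must be redone as above; as written, the proposal has a genuine gap at precisely its key step.
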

\begin{proof}
The image of $\im(B)$ of $B$ is equal
to $\im(A_{2,1})+\im(A_{2,2})$. The space $\im(A_{2,1})$ is spanned by all
$$
\sum_{e\in Y} a_{x,e}+\sum_{e\in E(x)\setminus Y} b_{x,e}
$$
with $x\in X$, $Y\subseteq E(x)$ with $|Y|$ even, and $\im(A_{2,2})$ is spanned by
all
$$
a_{x,e}+a_{y,e},b_{x,e}+b_{y,e}
$$
with $x\in X$ and $e\in E(x)$. It is clear that $\dim\im(A_{2,2})=2|E|$.
For $e=\{x,y\}\in E$, define $a_e=a_{x,e}+\im(A_{2,2})=a_{y,e}+\im(A_{2,2})$
and $b_e=b_{x,e}+\im(A_{2,2})=b_{y,e}+\im(A_{2,2})\in k^{4|E|}/\im(A_{2,2})$.
Now $\im(B)/\im(A_{2,2})$ is spanned by all 
$$
\sum_{e\in Y}a_{e}+\sum_{e\in E(x)\setminus Y}b_e
$$
where $x\in X$ and $Y\subseteq X$ with $|Y|$ even.
Note that $a_{e}+b_{e}+a_{f}+b_f\in \im(B)/\im(A_{2,2})$ for all $x\in X$, $e,f\in E(x)$.
Since $Q$ is connected, it follows that $a_e+b_e+a_f+b_f\in \im(B)/\im(A_{2,2})$
for all $e,f\in E$. Let $Z\subseteq \im(B)$ containing $\im(A_{2,2})$
such that $Z/\im(A_{2,2})$ is spanned by all $a_e+b_e+a_f+b_f$.
The dimension of $Z/\im(A_{2,2})$ is $|E|-1$.
Now $\im(B)/Z$ is spanned by all elements of the form
$$
\sum_{e\in E(x)}b_e+Z
$$
with $x\in X$. Since $Q$ is connected, it follows that  $\dim \im(B)/Z=|X|-1$.
We conclude that
$$
\rank(B)=\dim\im(B)=2|E|+(|E|-1)+(|X|-1)=3|E|+|X|-2.
$$

We can do a similar computation for $\rank(B')$. First of all $\dim\im(A_{2,2}')=2|E|$.
Let $\widetilde{e}=\{\widetilde{x},\widetilde{y}\}$ be the special edge.
For $e=\{x,y\}\neq\widetilde{e}$, we define $a'_e=a_{x,e}+\im(A_{2,2}')$
and $b'_e=b_{x,e}+\im(A_{2,2}')$. For $\widetilde{e}=\{\widetilde{x},\widetilde{y}\}$
we define $a_{\widetilde{e}}'=a_{\widetilde{x},\widetilde{e}}+\im(A_{2,2}')=b_{\widetilde{y},\widetilde{e}}+\im(A_{2,2}')$
and $b_{\widetilde{e}}'=b_{\widetilde{x},\widetilde{e}}+\im(A_{2,2}')=a_{\widetilde{y},\widetilde{e}}\in\im(A_{2,2}')$.
Let $Z'\subseteq B$ be he space containing $\im(A_{2,2}')$
such that $Z'/\im(A_{2,2}')$ is spanned by all $a_e'+b_e'+a_f'+b_f'$ with $e,f\in E$.
We have $\dim (Z'/\im(A_{2,2}'))=|E|-1$.
Finally, $\im(B')/\dim(Z')$ is spanned by all 
$$
\sum_{e\in E(x)} b_{e}'+Z'
$$
with $x\in X$ and $x\neq \widetilde{y}$, and
$$
\big(\sum_{e\in E(\widetilde{x})\setminus\{\widetilde{e}\}}a_{e}'\big)+b_{\widetilde{e}'}.
$$
It is easy to see that $\dim(\im(B')/\dim(Z'))=|X|$. So we obtain
$$
\dim(\im(B'))=2|E|+(|E|-1)+|X|=3|E|+|X|-1.
$$

\end{proof}
\begin{definition}
A separator of a graph $Q=(X,R)$ is a subset $Y\subset X$ such that the induced subgraph
on $X\setminus Y$ has no connected component with more than $|X|/2$ vertices.
\end{definition}
The following theorem is Theorem 6.4 in \cite{CFI}.
\begin{theorem}
Suppose that $Q$ is a graph such that every separator of $Q$ has at least $d+1$ vertices. 
Then $\Gamma(Q)$ and $\Gamma'(Q)$ cannot be distinguished by the $d$-variable logic with counting.
\end{theorem}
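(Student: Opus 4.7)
The plan is to translate the statement into a game-theoretic problem via the standard equivalence between $\CC_d$-equivalence and the $d$-pebble bijective counting game (Hella's theorem): $\Gamma(Q)$ and $\Gamma'(Q)$ are $\CC_d$-indistinguishable if and only if in every round of this game Duplicator can produce a color-preserving bijection $f:X(Q)\to X(Q)$ so that, whichever vertex $v$ Spoiler then picks, placing the pebble pair on $(v,f(v))$ extends the current partial isomorphism. So my goal reduces to exhibiting such a strategy.

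The second step is to exploit the symmetry of the CFI construction. For every $x\in X$ and every even subset $S\subseteq E(x)$, the involution that swaps $a_{x,e}\leftrightarrow b_{x,e}$ for $e\in S$ extends to an automorphism of $\Gamma(Q)$, because $X_1(Q)$ is closed under the corresponding toggle on $Y$. Composing such swaps along a path in $Q$ has the effect of transporting a ``twist'' from one edge to an adjacent edge, so the isomorphism class of a twisted graph depends only on the parity of twists across any edge cut (this is exactly the content of Lemma~\ref{lem:not_same_rank} read mod~$2$). In particular the single twist at $\widetilde{e}$ that distinguishes $\Gamma'(Q)$ from $\Gamma(Q)$ can be freely moved to any other edge in the same connected component of $Q$ without changing the isomorphism type of the resulting graph.

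With these tools in place, the Duplicator strategy is: at any stage, project the at most $d$ currently pebbled vertices to a set $Y\subseteq X$ via $a_{x,e},b_{x,e},c_{x,Z}\mapsto x$. By the separator hypothesis, $|Y|\le d$ implies that $Q\setminus Y$ has a connected component $Q_0$ with more than $|X|/2$ vertices, hence containing at least one edge $e_0$. Use the swap automorphisms to transport the distinguishing twist to $e_0$; the resulting map fixes every pebbled vertex and is an automorphism on the rest. The bijection $f$ supplied to the counting game is obtained by composing this automorphism with the identity outside the pebbled region, and counting conditions are automatically preserved because $f$ is an honest graph automorphism away from $Y$.

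The main obstacle will be making the strategy \emph{bijective and counting-compatible} across rounds rather than merely a partial isomorphism as in the ordinary pebble game; when Spoiler removes a pebble and places it elsewhere, the ``safe'' component $Q_0$ changes and the twist must be re-routed accordingly, so one has to argue that no sequence of $d$-pebble configurations can isolate the twist to a specific edge. This is precisely where the $d+1$ separator bound does its work: no such configuration disconnects $Q$ into small pieces, so the parity information that would witness the difference between $\Gamma(Q)$ and $\Gamma'(Q)$ is always hidden in $Q_0$. A round-by-round induction, producing a fresh bijection from the swap-automorphism group each time Spoiler moves, then yields the theorem; the detailed bookkeeping is carried out in~\cite[\S6]{CFI}.
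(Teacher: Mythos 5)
The paper does not actually prove this statement: it is quoted verbatim as Theorem 6.4 of \cite{CFI}, so the ``paper's proof'' is a citation. Your proposal follows the same route in substance --- it is an outline of CFI's bijective $d$-pebble counting-game argument (Hella's theorem, twist-transporting maps, and the fact that a set of at most $d$ projected pebbles is not a separator, hence leaves a component with more than $|X|/2$ vertices in which the twist can hide) --- and, like the paper, it ultimately defers the decisive round-by-round bookkeeping to \cite[\S 6]{CFI}. In that sense the approach is the expected one and consistent with the paper's treatment.

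Two points would need repair if this were to stand as a proof rather than a pointer. First, your symmetry claim is stated incorrectly: for a single vertex $x$ and an even set $S\subseteq E(x)$, swapping $a_{x,e}\leftrightarrow b_{x,e}$ for $e\in S$ (with $c_{x,Y}\mapsto c_{x,Y\triangle S}$) is \emph{not} an automorphism of $\Gamma(Q)$, because it sends the inter-gadget edge $\{a_{x,e},a_{y,e}\}$ to the non-edge $\{b_{x,e},a_{y,e}\}$; it is an isomorphism between differently twisted variants $\Gamma_T(Q)$, and genuine automorphisms arise only from consistent families of swaps, i.e.\ twist sets in the cycle space of $Q$. The transport-of-twist mechanism you then invoke is the correct one, but the statement as written is false, and your later phrase ``$f$ is an honest graph automorphism away from $Y$'' inherits the same imprecision: Duplicator's bijection is an isomorphism except at the one edge gadget carrying the twist, and it is announced \emph{before} Spoiler places the pebble, so Spoiler may pebble an endpoint of the twisted edge. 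Second, handling exactly that situation --- showing the pebbled map remains a partial isomorphism and that the twist can be re-routed at the next round through the intersection of the two large components, using that both exceed $|X|/2$ --- is the entire content of the induction, and your sketch names it as ``the main obstacle'' without carrying it out. Since the paper itself only cites \cite{CFI}, this is acceptable as a summary, but it is not an independent proof.
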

There exists a family of graphs $T_d$ with the following properties: $T_d$ has $O(d)$ vertices, ever vertex in $T_d$ has degree $3$,
and  every separator has at least $d+1$ vertices. Then $\Gamma(T_d)$ and $\Gamma'(T_d)$ have $O(d)$ vertices,  and
$\Gamma(T_d)\sim_d \Gamma(T_d')$. Every vertex of $\Gamma(T_d)$ or $\Gamma'(T_d)$ has degree 3.
This shows that for fixed $d$,  the $d$-dimensional Weisfeiler-Lehman algorithm cannot distinguish all graphs of degree $3$.
However, it is possible to distinguish graphs of bounded degree in polynomial time. Such an algorithm was given in \cite{Luks}.

\begin{proof}[Proof of Theorem~\ref{theo:CFInoniso}]
Suppose that $k=\F_2$ and
  $Q=\langle X,E\rangle$. We will show that $A_{\Gamma(Q)}$ and $A_{\Gamma'(Q)}$ can be separated by
a $3$-constructible functor.
We have $A_{\Gamma},A_{\Gamma_1}\in V=U\otimes U\oplus U\oplus U\oplus k$. Let $p_1,p_2:V\to U$
be the two projections onto $U$, and $q:V\to U\otimes U\cong \End(U)$ be the projection onto $U\otimes U$.
Then we have $p_2(A_{\Gamma(Q)})=p_2(A_{\Gamma'(Q)})=\sum_{x\in X_2(Q)} x$.
Let $\delta:U\to U\otimes U$ defined by $\delta(x)=x\otimes x$ for all $x\in X(Q)$.
Then $\delta(p_2(A_{\Gamma(Q)}))=\delta(p_2(A_{\Gamma'(Q)}))\in U\otimes U\cong U\otimes U^\star\cong \End(U)$
is the projection of onto the span of $X_2(Q)$.
The compositions $q(A_{\Gamma(Q)})\circ \delta(p_2(A_{\Gamma(Q)}))$ and
$q(A_{\Gamma'(Q)})\circ \delta(p_2(A_{\Gamma'(Q)})$ are given by the matrices $B$ and $B'$ in the proof of Proposition~\ref{prop:not_isomorphic}.
Define the following $3$-constructible functors:
The functor
$$
{\mathcal F}_1:{\mathcal C}_3(V)\to {\mathcal C}_3(\End(U))
$$
is defined by the $3$-constructible equivariant linear map $\delta\circ p_2$.
The functor
$$
{\mathcal F}_2:{\mathcal C}_3(\End(U))\to {\mathcal C}_3(\End(U)\otimes U)$$
is defined by
$$
{\mathcal F}_2(Z)=Z\otimes U.
$$
The functor
$$
{\mathcal F}_3:{\mathcal C}_3(\End(U)\otimes U)\to{\mathcal C}_3(U)
$$
is defined by the equivariant $f\otimes v\mapsto f(v)$.
Let ${\mathcal F}_4:{\mathcal C}_3(V) \to {\mathcal C}_3(\End(U))$
defined by the equivariant linear map $q$.
Then ${\mathcal F}_3\circ {\mathcal F}_2\circ {\mathcal F}_1$ and ${\mathcal F}_4$ are $3$-constructible,
and
$$
{\mathcal F}_4\otimes ({\mathcal F}_3\circ {\mathcal F}_2\circ {\mathcal F}_1):{\mathcal C}_3(V)\to {\mathcal C}_3(\End(U)\otimes U).
$$
is constructible.
Define a $3$-constructible functor ${\mathcal G}:{\mathcal C}_3(V)\to {\mathcal C}_3(U)$ by
$$
{\mathcal G}={\mathcal F}_3\circ ({\mathcal F}_4\otimes ({\mathcal F}_3\circ {\mathcal F}_2\circ {\mathcal F}_1)).
$$
Then we have ${\mathcal G}(A_{\Gamma(Q)})=\im B$ and ${\mathcal G}(A_{\Gamma'(Q)})=\im B'$.
By Lemma~\ref{lem:not_same_rank}, we have
$\dim {\mathcal G}(A_{\Gamma(Q)})\neq \dim {\mathcal G}(A_{\Gamma'(Q)})$,
so ${\mathcal G}$ distinguishes $A_{\Gamma(Q)}$ and $A_{\Gamma'(Q)}$.
\end{proof} 
\section{Open problems}
We finish with some open questions:
 
\begin{problem}
Does ${\bf AC}_d$ distinguish all pairs of non-isomorphic graphs for some $d$?
\end{problem}
A positive answer to this problem implies that the Graph Isomorphism Problem lies in the complexity class {\bf P}.

Suppose that $\Gamma_1,\Gamma_2$ are (colored) graphs constructed using the Cai-F\"urer-Immerman method. We know that $A_{\Gamma_1}$
and $A_{\Gamma_2}$ are non-isomorphic in ${\mathcal C}_3(V)$, assuming we are working over the field $\F_2$ (see Theorem~\ref{theo:CFInoniso}). The proof
heavily relies on the fact that we are working over the field $\F_2$. So a natural question to ask is:
\begin{problem}
Are $A_{\Gamma_1}$ and $A_{\Gamma_2}$ non-isomorphic in
 ${\mathcal C}_3(V)$, even if we are  working over a field of characteristic other than 2?
\end{problem}
If we work over a base field $k=\Q$, then the size of the rational numbers may grow exponentially if we do
arithmetic operations such as multiplications and additions. So it is a priori not clear that algorithms for testing isomorphism in ${\mathcal C}_d(V)$
run in polynomial time. 
\begin{problem}
If we work over the basefield $k=\Q$, can we test for isomorphism in ${\mathcal C}_d(V)$ in polynomial time?
\end{problem}
One may expect that there is a probabilistic algorithm for testing isomorphism in ${\mathcal C}_d(V)$ by working over $\F_p$
for various random primes $p$ for which $\log(p)$ is polynomial in the number of vertices.

\end{document}